\documentclass{amsart}

\usepackage[english]{babel}
\usepackage[latin1]{inputenc}
 \usepackage[T1]{fontenc}
 \usepackage{lmodern}
 
\usepackage{csquotes}
\usepackage{tikz}
\usepackage[all]{xy}
\usepackage{amsmath,amssymb,amsthm}
\usepackage{marvosym}
\usepackage{fullpage}
\usepackage{xy}
\usepackage{enumerate}
\usepackage{mathrsfs}

\newcommand{\Q}{\mathbb{Q}}
\newcommand{\C}{\mathbb{C}}
\newcommand{\PP}{\mathbb{P}}

\newcommand{\isomto}{\overset{\simeq}{\rightarrow}}
\newcommand{\K}{\mathcal{K}}

\newcommand{\wE}{{}_{\mathrm{w}}\!E}
\newcommand{\gr}{\mathrm{gr}}

\newcommand{\s}{\mathscr{S}}
\newcommand{\td}[1]{\widetilde{#1}}

\renewcommand{\leq}{\leqslant}
\renewcommand{\geq}{\geqslant}

\newtheorem{thm}{Theorem}[section]

\newtheorem{lem}[thm]{Lemma}
\newtheorem{prop}[thm]{Proposition}

\theoremstyle{remark}
\newtheorem{rem}[thm]{Remark}

\theoremstyle{remark}
\newtheorem{ex}[thm]{Example}

\theoremstyle{definition}
\newtheorem{defi}[thm]{Definition}

\title[The Orlik-Solomon model]{The Orlik-Solomon model for hypersurface arrangements}

\author{Cl\'{e}ment Dupont}

\address{Max-Planck Institut f\"{u}r Mathematik\\
Vivatsgasse 7\\
53111 Bonn, Germany.}

\email{cdupont@mpim-bonn.mpg.de}

  


\begin{document}

\begin{abstract}
  We develop a model for the cohomology of the complement of a hypersurface arrangement inside a smooth projective complex variety. This generalizes the case of normal crossing divisors, discovered by P. Deligne in the context of the mixed Hodge theory of smooth complex varieties. Our model is a global version of the Orlik-Solomon algebra, which computes the cohomology of the complement of a union of hyperplanes in an affine space. The main tool is the complex of logarithmic forms along a hypersurface arrangement, and its weight filtration. Connections with wonderful compactifications and the configuration spaces of points on curves are also studied.
\end{abstract}

\maketitle

\section{Introduction}

		Let~$X$ be a complex manifold of dimension~$n$. A hypersurface arrangement in $X$ is a union 
		$$L=L_1\cup\cdots\cup L_l$$
		of smooth hypersurfaces~$L_i\subset X$,~$i=1,\ldots,l$, that locally looks like a union of hyperplanes in~$\C^n$: around each point of~$X$ we can find a system of local coordinates in which each~$L_i$ is defined by a linear equation.\\
		This generalizes the notion of a (simple) normal crossing divisor: a hypersurface arrangement is a normal crossing divisor if the local linear equations defining the~$L_i$'s are everywhere linearly independent; in other words, if we can always choose local coordinates~$(z_1,\ldots,z_n)$ such that~$L$ is locally defined by the equation~$z_1\cdots z_r=0$ for some~$r$.\\
		Besides normal crossing divisors, examples of hypersurface arrangements include unions of hyperplanes in a projective space~$\PP^n(\C)$, or unions of diagonals~$\Delta_{i,j}=\{y_i=y_j\}\subset Y^n$ inside the~$n$-fold cartesian product of a Riemann surface~$Y$. The class of hypersurface arrangements is also closed under certain blow-ups.\\
		
		The aim of this article is to define and study a model~$M^\bullet(X,L)$ for the cohomology algebra over~$\Q$ of the complement~$X\setminus L$ of a hypersurface arrangement, when~$X$ is a smooth projective variety over~$\C$.\\
		Our model, which we call the Orlik-Solomon model, has \textit{combinatorial} inputs coming from the theory of hyperplane arrangements (the local setting) and \textit{geometric} inputs coming from the cohomology of smooth hypersurface complements in a smooth projective variety (the global setting). Roughly speaking, it is the \textit{direct product} of two classical tools related to these two situations, that we first recall.
		\begin{itemize}
		\item \textit{Combinatorics: the Orlik-Solomon algebra.}
		Let~$L$ be a union of hyperplanes in~$\C^n$ that contain the origin, and call any multiple intersection of hyperplanes of~$L$ a stratum of~$L$. The strata of~$L$ form a poset which is graded by the codimension of the strata, and denoted by~$\s_\bullet(L)$. In~\cite{orliksolomon}, Orlik and Solomon introduced~$\Q$-vector spaces~$A_S(L)$ for every stratum~$S$, and gave the direct sum
		\begin{equation}\label{introdirectsumS}
		A_\bullet(L)=\bigoplus_{S\in \s_\bullet(L)}A_S(L)
		\end{equation}
		the structure of a graded algebra, via product maps
		\begin{equation}\label{introprodS}
		A_S(L)\otimes A_{S'}(L)\rightarrow A_{S\cap S'}(L).
		\end{equation}		
		Furthermore, there are natural morphisms
		\begin{equation}\label{introdiffS}
		A_S(L)\rightarrow A_{S'}(L)
		\end{equation}		
		for any inclusion~$S\subset S'$ of strata of~$L$ such that~$\mathrm{codim}(S')=\mathrm{codim}(S)-1$. The crucial fact is that the Orlik-Solomon algebra is a combinatorial object, which means that it only depends on the poset of the strata of~$L$. We now recall the classical Brieskorn-Orlik-Solomon theorem (see Theorem \ref{bos} for a more precise statement). Here~$H^\bullet(\C^n\setminus L)$ denotes the cohomology of the complement~$\C^n\setminus L$ with rational coefficients.
		
		\begin{thm}[Brieskorn-Orlik-Solomon]
		We have an isomorphism of graded algebras~$$H^\bullet(\C^n\setminus L)\cong A_\bullet(L).$$
		\end{thm}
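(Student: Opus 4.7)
The plan is to exhibit an algebra homomorphism $\phi\colon A_\bullet(L)\to H^\bullet(\C^n\setminus L)$ and to prove it is an isomorphism by induction on the number of hyperplanes, via a deletion--restriction argument. The map sends the generator $e_i$ attached to a hyperplane $H_i=\{f_i=0\}$ to the class of the logarithmic form $\omega_i=\frac{1}{2\pi\sqrt{-1}}\frac{df_i}{f_i}\in H^1(\C^n\setminus L)$ and extends multiplicatively, sending $e_{i_1}\cdots e_{i_r}$ to $\omega_{i_1}\wedge\cdots\wedge\omega_{i_r}$.

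First I would check that $\phi$ is well-defined. The defining relations of $A_\bullet(L)$ are the dependency relations $\sum_{k=1}^{r}(-1)^{k-1}e_{i_1}\cdots\widehat{e_{i_k}}\cdots e_{i_r}=0$ whenever the linear forms $f_{i_1},\ldots,f_{i_r}$ are linearly dependent. These become valid identities at the level of forms on $\C^n\setminus L$ by a direct computation starting from a relation $\sum_k\lambda_k f_{i_k}=0$ and dividing through by $f_{i_1}\cdots f_{i_r}$.

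For the induction, fix a hyperplane $H_l\in L$ and form the deletion $L'=L\setminus H_l$ and the restriction $L''=\bigcup_{i<l}(H_i\cap H_l)$, viewed as a hyperplane arrangement in $H_l\cong\C^{n-1}$. On the Orlik--Solomon side one has a short exact sequence
\begin{equation*}
0\longrightarrow A_\bullet(L')\longrightarrow A_\bullet(L)\xrightarrow{\;\mathrm{Res}\;} A_{\bullet-1}(L'')\longrightarrow 0,
\end{equation*}
where the first map is induced by the inclusion of arrangements and the second is the residue removing a factor $e_l$. On the cohomology side, the Gysin long exact sequence for the smooth closed divisor $H_l\setminus L''\subset\C^n\setminus L'$ reads
\begin{equation*}
\cdots\to H^{k-2}(H_l\setminus L'')\to H^k(\C^n\setminus L')\to H^k(\C^n\setminus L)\xrightarrow{\;\mathrm{Res}\;} H^{k-1}(H_l\setminus L'')\to\cdots,
\end{equation*}
and $\phi$ is compatible with both sequences: compatibility with the pullback is immediate, and compatibility with the residue follows from the standard local formula for the Poincar\'e residue of a logarithmic form along a smooth divisor.

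The main obstacle is showing that the Gysin long exact sequence degenerates into a short exact sequence, equivalently that the boundary map $H^{k-2}(H_l\setminus L'')\to H^k(\C^n\setminus L')$ vanishes. The cleanest route is to combine the induction hypothesis (which, together with the Orlik--Solomon short exact sequence, predicts the rank of $H^\bullet(\C^n\setminus L)$) with Brieskorn's observation that every cohomology class on $\C^n\setminus L$ is represented by a closed polynomial expression in the $\omega_i$'s; this gives surjectivity of $\phi$ and then forces the boundary in the Gysin sequence to vanish by a rank count. The five lemma applied to the resulting diagram of short exact sequences then yields that $\phi$ is an isomorphism for $L$. The base case $L=\emptyset$ is trivial, since both $H^\bullet(\C^n)$ and $A_\bullet(\emptyset)$ reduce to $\Q$ concentrated in degree zero.
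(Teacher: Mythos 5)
First, a contextual remark: the paper does not prove this statement at all --- it quotes it as a classical theorem, attributing the isomorphism $v$ to Brieskorn (resolving Arnol'd's conjecture) and $u$ to Orlik--Solomon, with references to Orlik--Terao. So any proof you give is necessarily a different route from the paper; the skeleton you chose (deletion--restriction on the algebra side, the Gysin sequence of $H_l\setminus L''\subset\C^n\setminus L'$ on the topological side, compatibility of $\phi$ with both, five lemma) is indeed the standard inductive proof. The genuine gap is in how you split the Gysin long exact sequence. Surjectivity of $\phi$ together with the induction hypothesis only yields the upper bound $b_k(\C^n\setminus L)\le\dim A_k(L)=b_k(\C^n\setminus L')+b_{k-1}(H_l\setminus L'')$, while exactness of the Gysin sequence gives $b_k(\C^n\setminus L)=b_k(\C^n\setminus L')+b_{k-1}(H_l\setminus L'')-\mathrm{rk}\,g_{k-2}-\mathrm{rk}\,g_{k-1}$, where $g_j\colon H^{j}(H_l\setminus L'')\to H^{j+2}(\C^n\setminus L')$ are the Gysin pushforwards. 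Both statements bound $b_k(\C^n\setminus L)$ from above, so no rank count forces $g_j=0$; you would need a \emph{lower} bound on $b_k(\C^n\setminus L)$ (injectivity-type information), which is precisely what the induction is supposed to produce at the end. Moreover, what you call ``Brieskorn's observation'' --- that every class of $H^\bullet(\C^n\setminus L)$ is a polynomial in the $[\omega_i]$ --- is exactly the surjectivity half of Brieskorn's theorem, i.e.\ the deep geometric part of the statement being proved; invoking it as a black box largely defeats the purpose of the argument.

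The good news is that the gap closes using only ingredients you have already assembled, with no appeal to Brieskorn. The commutative square comparing the Orlik--Solomon residue $A_k(L)\to A_{k-1}(L'')$ with the topological residue $H^k(\C^n\setminus L)\to H^{k-1}(H_l\setminus L'')$, combined with the surjectivity of the former (deletion--restriction) and the induction hypothesis that $\phi_{L''}$ is an isomorphism, shows that the topological residue is surjective in every degree. By exactness this forces every Gysin pushforward $g_j$ to vanish, so the long exact sequence breaks into short exact sequences $0\to H^k(\C^n\setminus L')\to H^k(\C^n\setminus L)\to H^{k-1}(H_l\setminus L'')\to 0$, and the five lemma (outer maps isomorphisms by induction, rows exact, squares commuting) completes the induction step. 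Two smaller points to tidy up: run the induction on the number of hyperplanes with the ambient dimension arbitrary, since $L''$ lives in $H_l\cong\C^{n-1}$; and keep the $\frac{1}{2\pi i}$ normalisation consistently, both so that $\phi$ genuinely lands in rational cohomology (pull back the generator of $H^1(\C^*)$ along $f_i$) and so that the residue square commutes on the nose rather than up to a period.
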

		
		One may define an Orlik-Solomon algebra~$A_\bullet(L)$ for~$L$ any hypersurface arrangement inside a complex manifold~$X$. We still have a direct sum decomposition (\ref{introdirectsumS}), with~$\s_\bullet(L)$ the graded poset of strata of~$L$, as well as product maps (\ref{introprodS}) and natural morphisms (\ref{introdiffS}). As in the local case, the Orlik-Solomon algebra~$A_\bullet(L)$ only depends on the poset of strata of~$L$. It is functorial with respect to~$(X,L)$ in the sense that any holomorphic map~$\varphi:X\rightarrow X'$ such that~$\varphi^{-1}(L')\subset L$ induces a map of graded algebras~$A_\bullet(\varphi):A_\bullet(L')\rightarrow A_\bullet(L)$.\\
		
		\item \textit{Geometry: the Gysin long exact sequence.}
		For a smooth hypersurface~$V$ inside a smooth projective variety~$X$ over~$\C$, the Gysin morphisms of the inclusion~$V\subset X$ are the morphisms~$H^{k-2}(V)(-1)\rightarrow H^k(X)$ , where~$(-1)$ denotes a Tate twist, obtained as the Poincar\'{e} duals of the natural morphisms
		$H^{2n-k}(X)\rightarrow H^{2n-k}(V)$
		where~$n=\mathrm{dim}_\C(X)$. They fit into a long exact sequence, called the Gysin long exact sequence:
		\begin{equation}\label{introgysinles}
		\cdots \rightarrow H^{k-2}(V)(-1)\rightarrow H^k(X) \rightarrow H^k(X\setminus V)\rightarrow H^{k-1}(V)(-1) \rightarrow \cdots
		\end{equation}
		It is worth noting that the connecting homomorphisms~$H^k(X\setminus V)\rightarrow H^{k-1}(V)(-1)~$ are residue morphisms, which are easily described using logarithmic forms.
		\end{itemize}		
		
		We can now state our main theorem (see Theorem \ref{maintheorem} for more precise statements).
		
		\begin{thm}\label{intromaintheorem}
		Let~$X$ be a smooth projective variety over~$\C$ and~$L$ be a hypersurface arrangement in~$X$.
		\begin{enumerate}
		\item For integers~$q$ and~$n$ let us consider
		$$M_q^n(X,L)=\bigoplus_{S\in\s_{q-n}(L)}H^{2n-q}(S)(n-q)\otimes A_S(L)$$
		where~$(n-q)$ is a Tate twist, viewed as a pure Hodge structure of weight~$q$. Then the direct sum 
		$$M^\bullet(X,L)=\bigoplus_{q}M^\bullet_q(X,L)$$
		has the structure of a differential graded algebra (dga) in the (semi-simple) category of split mixed Hodge structures over~$\Q$. The product in~$M^\bullet(X,L)$ is induced by the product maps (\ref{introprodS}) of the Orlik-Solomon algebra and the cup-product on the cohomology of the strata. The differential in~$M^\bullet(X,L)$ is induced by the natural morphisms (\ref{introdiffS}) and the Gysin morphisms
		$$H^{2n-q}(S)(n-q)\rightarrow H^{2n-q+2}(S')(n+1-q)$$ of the inclusions of  strata~$S\subset S'$. The dga~$M^\bullet(X,L)$ is functorial with respect to~$(X,L)$ in the sense explained above.
		\item The dga~$M^\bullet(X,L)$ is a model for the cohomology of~$X\setminus L$ in the following sense: we have isomorphisms of pure Hodge structures over~$\Q$
		$$\gr^W_q H^n(X\setminus L)\cong H^n(M_q^\bullet(X,L))$$ which are compatible with the algebra structures, and functorial with respect to~$(X,L)$.
		\end{enumerate}
		\end{thm}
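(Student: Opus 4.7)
The plan is to realize $M^\bullet(X,L)$ as the $E_1$-page of the weight spectral sequence of the logarithmic complex $\Omega^\bullet_X(\log L)$, the central object developed earlier in the paper. Since the hypercohomology of this complex computes $H^\bullet(X\setminus L)$ and its weight filtration induces the mixed Hodge structure on the complement, the theorem reduces to identifying the associated graded of this filtration with the Gysin model and verifying the appropriate degeneration and algebra compatibility.

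\textbf{Step 1: Construct $M^\bullet(X,L)$ as a dga.} I would first build $M^\bullet(X,L)$ directly in the category of split mixed Hodge structures. The product combines the pullback and cup-product on strata cohomology with the Orlik-Solomon product (\ref{introprodS}), and the differential combines the Gysin pushforward for codimension-one inclusions of strata with the transition maps (\ref{introdiffS}). Graded commutativity, associativity, $d^2=0$, and the Leibniz rule reduce to the corresponding identities in each factor, together with the projection formula for Gysin morphisms and the Orlik-Solomon relations. Since Gysin morphisms and Orlik-Solomon maps are Tate twists of morphisms of pure Hodge structures of the correct weights, the resulting dga lives in split mixed Hodge structures.

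\textbf{Step 2: Residue isomorphism on graded pieces.} The decisive step is to establish a canonical residue isomorphism
$$\gr^W_c \Omega^\bullet_X(\log L) \;\simeq\; \bigoplus_{S\in\s_c(L)} (\iota_S)_* \Omega^{\bullet-c}_S(-c) \otimes A_S(L)$$
for every $c\geq 0$. Locally, in adapted coordinates where $L$ is a union of hyperplanes through a point, this is a sheaf-theoretic version of Brieskorn's theorem: wedges of $\dlog$ of the local linear forms organize into the Orlik-Solomon algebra, and the Poincar\'{e} residue along each codimension-$c$ stratum picks out the corresponding component. The hardest point is to verify that these local isomorphisms patch globally and canonically: one must check independence of the choice of adapted coordinates, which is the geometric incarnation of the purely combinatorial nature of $A_\bullet(L)$. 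Taking hypercohomology then yields $\mathbb{H}^n(X,\gr^W_c \Omega^\bullet_X(\log L)) \cong \bigoplus_{S\in\s_c(L)} H^{n-c}(S)(-c)\otimes A_S(L)$, which is precisely $M^n_q(X,L)$ with $c=q-n$.

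\textbf{Step 3: Spectral sequence, degeneration, and compatibility.} With the residue isomorphism in place, the $E_1$-page of the weight spectral sequence is $M^\bullet(X,L)$. A local calculation identifies the $E_1$-differential, arising from the short exact sequences $0\to W_{c-1}\to W_c\to \gr^W_c\to 0$, with the differential defined in Step 1 (Gysin pushforward tensored with Orlik-Solomon transition). Degeneration at $E_2$ follows from strictness of morphisms of mixed Hodge structures, exactly as in Deligne's treatment of the normal crossings case. The product on hypercohomology is induced by the wedge product of logarithmic forms, which respects $W_\bullet$ and reduces on $\gr^W$ to the product of Step 1, again by a local reduction to the Orlik-Solomon algebra. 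Functoriality in $(X,L)$ is inherited from the pullback functoriality of the logarithmic complex and its weight filtration; together these complete parts (1) and (2) of the theorem.
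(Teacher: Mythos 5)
Your proposal takes essentially the same route as the paper: realize $M^\bullet(X,L)$ as the $E_1$-page of the weight spectral sequence of the logarithmic complex, via the residue isomorphism identifying $\gr^W$ with strata cohomology tensored with the Orlik--Solomon pieces, then get $E_2$-degeneration and the rational/Hodge structure as in Deligne's normal crossing case, and identify the $E_1$-differential, product, and functoriality by local computations. One caution on notation: the complex you must use is $\Omega^\bullet_{\langle X,L\rangle}$, spanned by forms $\eta\wedge\frac{df_{i_1}}{f_{i_1}}\wedge\cdots\wedge\frac{df_{i_s}}{f_{i_s}}$, and not Saito's $\Omega^\bullet_X(\log L)$ which your notation literally denotes --- for the latter the comparison with $H^\bullet(X\setminus L)$ is exactly Terao's open conjecture (Remark \ref{conjterao}), so the argument would not go through with that complex.
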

		
		The precise definition of the Orlik-Solomon model~$M^\bullet(X,L)$ is given in \S\ref{defM}. Theorem \ref{intromaintheorem} generalizes the case of normal crossing divisors, which is due to P. Deligne~\cite{delignehodge2}, see also~\cite[8.35]{voisin}, as a by-product of the definition of the mixed Hodge structure on the cohomology of smooth varieties over~$\C$. The Orlik-Solomon model appears as the first page of a spectral sequence, called the Orlik-Solomon spectral sequence.\\
		
		Before we describe the proof of Theorem \ref{intromaintheorem} and some of its applications, we mention that it completes a result by E. Looijenga~\cite[\S 2]{looijenga} who first considered the Orlik-Solomon spectral sequence. Our approach is totally different, with a prominent use of differential forms. In particular, we introduce a complex of logarithmic differential forms (see \S\ref{introseclog} in this Introduction) that should have applications in other situations. Concretely, the main advantages of the use of differential forms are the following.
		\begin{enumerate}
		\item It allows us to prove the functoriality of the Orlik-Solomon model, whereas Looijenga's spectral sequence cannot be easily proved to be functorial. This is crucial when discussing the behaviour of the Orlik-Solomon model with respect to blow-ups (see \S\ref{introsecwonderful} in this Introduction and \S\ref{sectionwonderful}). As a consequence, we are able to reconcile Kriz's and Totaro's approaches on models for configuration spaces of points on curves (see \S\ref{introsecconf} in this Introduction and \S\ref{seccompkriz}).
		\item It makes the multiplicative structure of the Orlik-Solomon model transparent and closer in spirit to the classical Brieskorn-Orlik-Solomon theorem.
		\item Our approach is more down-to-earth in that we prove that the Orlik-Solomon spectral sequence is compatible with Hodge structures using only mixed Hodge theory \textit{\`{a} la} Deligne. With Looijenga's formalism, one would have to use Saito's theory of mixed Hodge modules (in this direction, see also~\cite{getzlerresolvingmhm}): indeed, his spectral sequence is defined out of a complex of sheaves built out of the constructible sheaves~$i_!i^!\mathbb{Q}$ for~$i$ a closed immersion, hence it is not immediate that it is compatible with mixed Hodge theory.
		\end{enumerate}
		
		\subsection{Wonderful compactifications}\label{introsecwonderful}
		
		We should say a word on the usefulness of the generalization from normal crossing divisors to hypersurface arrangements. Indeed, Deligne's approach relies on the fact that any smooth variety over~$\C$ can be viewed as the complement of a normal crossing divisor inside a smooth projective variety, using Nagata's compactification theorem and Hironaka's resolution of singularities. Thus the case of normal crossing divisors is (in principle) sufficient to give a model for the cohomology of \textit{any} smooth variety over~$\C$.\\
		In the framework of Theorem \ref{intromaintheorem}, we may even produce, following~\cite{fultonmcpherson,deconciniprocesi,hu,li}, an explicit sequence of blow-ups (see Theorem \ref{seqblowups})
		$$\pi:\td{X}\rightarrow X$$
		sometimes called a ``wonderful compactification'', that transforms~$L$ into a normal crossing divisor~$\td{L}=\pi^{-1}(L)$ inside~$\td{X}$ and induces an isomorphism 
		$$\pi:\td{X}\setminus\td{L}\isomto X\setminus L.$$ 
		Thus Deligne's special case of Theorem \ref{intromaintheorem} applied to~$(\td{X},\td{L})$ gives a model~$M^\bullet(\td{X},\td{L})$ for the cohomology of~$X\setminus L$. The functoriality of our construction gives a quasi-isomorphism of differential graded algebras
		\begin{equation}\label{qisMpi}
		M^\bullet(\pi):M^\bullet(X,L)\stackrel{\sim}{\rightarrow} M^\bullet(\td{X},\td{L})
		\end{equation}
		that we may compute explicitly (see Theorem \ref{formulaMpi}).\\
		The model~$M^\bullet(X,L)$ has three advantages over~$M^\bullet(\td{X},\td{L})$. Firstly, it is in general smaller ($M^\bullet(\pi)$ is always injective). Secondly, its definition only uses geometric and combinatorial information from the pair~$(X,L)$ without having to look at the blown-up situation~$(\td{X},\td{L})$. Thirdly, it is functorial with respect to~$(X,L)$.\\
		
		Along with the work of Morgan~\cite[Theorem 10.1]{morganalgebraictopology}, the quasi-isomorphism (\ref{qisMpi}) implies that~$M^\bullet(X,L)$ is a model of the space~$X\setminus L$ in the sense of rational homotopy theory (Theorem \ref{thmrationalhomotopy}).
		
		\subsection{Configuration spaces of points on curves}\label{introsecconf}
	
		Let~$Y$ be a compact Riemann surface and~$n$ be an integer. For all~$1\leq i<j\leq n$ we have a diagonal
		$$\Delta_{i,j}=\{y_i=y_j\}\subset Y^n$$ inside the~$n$-fold cartesian product of~$Y$. Any union of~$\Delta_{i,j}$'s then defines a hypersurface arrangement in~$Y^n$. For example, if we consider the union of all diagonals, the complement is the configuration space of~$n$ ordered points in~$Y$:
		$$C(Y,n)=\{(y_1,\ldots,y_n)\in Y^n\,\,|\,\,y_i\neq y_j\ \,\, \textnormal{for}\,\, i\neq j\}.$$
		Theorem \ref{intromaintheorem} hence gives an Orlik-Solomon model for the cohomology of~$C(Y,n)$. This model is isomorphic to the one independently found by I. Kriz~\cite{kriz} and B. Totaro~\cite{totaroconf}, as we prove in Theorem \ref{compbloch}.\\
		
		On the one hand, our method is close to Totaro's, since the Orlik-Solomon spectral sequence that we are considering in \S\ref{gysinss} is the Leray spectral sequence of the inclusion~$j:X\setminus L\hookrightarrow X$. On the other hand, the functoriality of our constructions implies that there exists a quasi-isomorphism~$M^\bullet(\pi)$ associated to any wonderful compactification~$\pi$; in \S\ref{seccompkriz} we prove that this quasi-isomorphism is exactly the one used by Kriz to prove the main result of~\cite{kriz}. Hence, our method reconciles Kriz's and Totaro's approaches in the case of curves.\\
		
		As a natural generalization, we consider the union of only certain diagonals~$\Delta_{i,j}$. Such a generalization has been recently studied by S. Bloch~\cite{blochtreeterated}, who gives a model in the spirit of Kriz and Totaro's model. We prove that this model is also isomorphic to our Orlik-Solomon model.	\\
		
		Shortly after a preprint of the present article was released, C. Bibby independently showed \cite{bibbyabelian} the existence of the Orlik-Solomon model following Totaro's approach, and applied it to the case of abelian arrangements. In \cite{bibbyhilburnchordal}, C. Bibby and J. Hilburn used the Orlik-Solomon model to study the homotopy-theoretic properties of certain configuration spaces of points on curves.
		
	\subsection{Logarithmic forms and mixed Hodge theory}\label{introseclog}
	
		We now discuss the proof of Theorem \ref{intromaintheorem}. Our approach follows Deligne's proof of the case of normal crossing divisors, hence makes extensive use of logarithmic forms and the formalism of mixed Hodge structures.\\
		Let~$X$ be a smooth projective variety and~$L=L_1\cup\cdots\cup L_l$ be a hypersurface arrangement in~$X$. The first task is to define a complex of sheaves on~$X$, denoted by~$\Omega^\bullet_{\langle X,L\rangle}$, of meromorphic forms on~$X$ with logarithmic poles along~$L$. In local coordinates where each~$L_i$ is defined by a linear equation~$f_i=0$, a section of~$\Omega^\bullet_{\langle X,L\rangle}$ is a meromorphic differential form on~$X$ which is a linear combination over~$\C$ of forms of the type
		\begin{equation}\label{eqlogformsncd1}
		\eta\wedge\dfrac{df_{i_1}}{f_{i_1}}\wedge\cdots\wedge\dfrac{df_{i_s}}{f_{i_s}}
		\end{equation}
		with~$\eta$ a holomorphic form and~$1\leq i_1< \cdots< i_s\leq l$. It has to be noted that the complex~$\Omega^\bullet_{\langle X,L\rangle}$ is in general a strict subcomplex of the complex~$\Omega^\bullet_X(\log L)$ introduced by Saito~\cite{saitologarithmic}, even though the two complexes coincide in the case of a normal crossing divisor.\\
		
		The main point of the complex~$\Omega^\bullet_{\langle X,L\rangle}$ is that it computes the cohomology of the complement~$X\setminus L$. More precisely, if we denote by~$j:X\setminus L\hookrightarrow X$ the open immersion of the complement of~$L$ inside~$X$, we prove the following theorem (Theorem \ref{qisglobal}).
		
		\begin{thm}\label{introqis}
		The inclusion~$\Omega^\bullet_{\langle X,L\rangle}\hookrightarrow j_*\Omega^\bullet_{X\setminus L}$ is a quasi-isomorphism, and hence induces isomorphisms
		\begin{equation}\label{isoiso}\mathbb{H}^n(\Omega^\bullet_{\langle X,L\rangle})\cong H^n(X\setminus L,\C).\end{equation}
		\end{thm}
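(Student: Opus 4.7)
First, I note that the stated hypercohomology isomorphism (\ref{isoiso}) is a formal consequence of the asserted quasi-isomorphism of complexes of sheaves. Indeed, the holomorphic Poincar\'e lemma makes $\Omega^\bullet_{X\setminus L}$ a resolution of $\mathbb{C}_{X\setminus L}$, and the vanishing $R^pj_*\Omega^q_{X\setminus L}=0$ for $p>0$ follows because the intersection of any small polydisc $U\subset X$ with $X\setminus L$ is the complement of an analytic hypersurface in a Stein open, hence itself Stein; thus $j_*\Omega^\bullet_{X\setminus L}$ is a model for $Rj_*\mathbb{C}_{X\setminus L}$, whose hypercohomology is $H^\bullet(X\setminus L,\mathbb{C})$.

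The quasi-isomorphism of complexes of sheaves can then be checked on stalks. Away from $L$ both sides coincide with $\Omega^\bullet_X$, so the content concentrates at a point $p\in L$. Around $p$ I choose local coordinates in which $L$ becomes a union of hyperplanes $\{f_i=0\}$ in a polydisc $U\subset\mathbb{C}^n$. After reordering, $f_1,\ldots,f_r$ may be assumed to form a maximal linearly independent subfamily; completing them to a linear coordinate system $(f_1,\ldots,f_r,z_{r+1},\ldots,z_n)$ splits $U$ as a product $U'\times U''$ with $L=L'\times U''$ for a central essential arrangement $L'\subset U'\subset\mathbb{C}^r$. Since $\Omega^\bullet_{\langle U,L\rangle}\cong\Omega^\bullet_{\langle U',L'\rangle}\otimes_\mathbb{C}\Omega^\bullet_{U''}$ and $\Omega^\bullet_{U''}$ is a resolution of $\mathbb{C}_{U''}$, a K\"unneth argument reduces the problem to the essential case $\bigcap L_i=\{0\}\subset\mathbb{C}^r$.

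For this essential case I would argue by induction, using the weight filtration $W_\bullet\Omega^\bullet_{\langle U,L\rangle}$ in which $W_p$ consists of forms involving at most $p$ logarithmic factors $df_{i_j}/f_{i_j}$ in (\ref{eqlogformsncd1}). Poincar\'e residues should identify each graded piece $\gr^W_p$ with a direct sum, indexed by the codimension-$p$ strata $S$ of $L$, of logarithmic-form complexes along the induced arrangement on $S$, twisted by the degree-$p$ piece of the local Orlik-Solomon algebra $A_S(L)$ — the Orlik-Solomon relations appearing precisely to account for the linear dependencies among the $f_i$. Inducting on dimension, these graded pieces compute cohomologies of stratum complements, and the weight spectral sequence assembles them into $H^\bullet(U\setminus L,\mathbb{C})$, recovering Brieskorn's theorem that the subalgebra generated by the $df_i/f_i$ realizes the full cohomology of $U\setminus L$.

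The principal difficulty is establishing the identification of $\gr^W\Omega^\bullet_{\langle X,L\rangle}$ in the non-transverse setting. In Deligne's normal crossing case, Poincar\'e residues give a clean isomorphism with de Rham complexes on the normalizations of the strata; in the hypersurface arrangement setting, multiple residue symbols $df_{i_1}/f_{i_1}\wedge\cdots\wedge df_{i_s}/f_{i_s}$ compute residues along the same stratum and are subject to Orlik-Solomon relations, so the correct description of $\gr^W_p$ carries a twist by the combinatorial pieces $A_S(L)$. Producing this identification and verifying its compatibility with the differentials induced by the morphisms (\ref{introdiffS}) is the technical heart of the argument and is what brings the combinatorial Orlik-Solomon input together with the geometric Gysin input of the paper.
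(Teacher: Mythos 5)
Your overall strategy (localize, then use the weight filtration on the logarithmic complex together with Orlik--Solomon combinatorics) is the paper's strategy, but as written the technical heart is misstated. The graded pieces of $W$ are \emph{not} logarithmic complexes of the induced arrangements on the strata: the correct identification (Theorem \ref{gr}, sheafified after Theorem \ref{qis}) is $\gr^W_k\Omega^\bullet_{\langle X,L\rangle}\cong\bigoplus_{S\in\s_k(L)}(i_S)_*\Omega^{\bullet-k}_S\otimes A_S(L)$, with \emph{holomorphic} forms on the closed strata $S$ twisted by $A_S(L)$. Already for the two coordinate axes in $\C^2$ the form $\frac{dy}{y}\wedge\frac{dx}{x}$ lies in $W_2$, so no weight-$1$ class has residue $\frac{dy}{y}$ along $\{x=0\}$: logarithmic complexes of restricted arrangements appear only as the target of a \emph{single} residue, in the deletion--restriction sequence $(\mathcal{R})$ of \S\ref{secres}, which is exactly the tool the paper uses (induction on the number of hyperplanes, not on dimension) to prove Theorem \ref{gr}. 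Your wrong form of the graded pieces would also derail the ensuing spectral-sequence computation, whose whole point is that $E_1^{-p,q}$ vanishes except for $q=2p$ because the closed strata are contractible, forcing degeneration and giving $H^p$ of the log complex $\cong A_p(L)\otimes\C$. (Two smaller points: your K\"unneth splitting $\Omega^\bullet_{\langle U,L\rangle}\cong\Omega^\bullet_{\langle U',L'\rangle}\otimes_\C\Omega^\bullet_{U''}$ is not literally true for the algebraic tensor product of spaces of holomorphic forms on polydiscs and is in any case an unnecessary reduction; your opening reduction of (\ref{isoiso}) to the sheaf-level quasi-isomorphism via Steinness of $U\setminus L$ is fine.)

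The second, more serious, gap is the role of the Brieskorn--Orlik--Solomon theorem. The weight spectral sequence computes the cohomology of the \emph{subcomplex} $\Omega^\bullet\langle L\rangle$ only; it yields $H^p(\Omega^\bullet\langle L\rangle)\cong A_p(L)\otimes\C$, sending $\omega_I$ to $(2i\pi)^pe_I$, and says nothing about injectivity or surjectivity of the map to $H^p(\Delta^n\setminus L,\C)$. To conclude that the inclusion is a quasi-isomorphism you must know that the classes of the forms $\omega_I$ span the cohomology of the complement subject exactly to the Orlik--Solomon relations; that is Theorem \ref{bos}, which the paper \emph{cites as an input} (together with the retraction $\Delta^n\setminus L\subset\C^n\setminus L$ to transfer it to the polydisc). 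Your claim that the weight spectral sequence ``recovers Brieskorn's theorem'' is backwards: without Brieskorn--Orlik--Solomon, or an independent inductive comparison (for instance playing the residue short exact sequences of Theorem \ref{resexact} against the Gysin sequences of the complements and applying the five lemma), the argument does not close. So either invoke Theorem \ref{bos} explicitly, as the paper does, or supply such a comparison argument; your sketch does neither.
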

		
		It has to be noted (Remark \ref{conjterao}) that according to this theorem, a conjecture of H. Terao~\cite{teraologarithmic} is equivalent to the fact that the inclusion~$\Omega^\bullet_{\langle X,L\rangle}\subset\Omega^\bullet_X(\log L)$ is a quasi-isomorphism.\\
		
		The proof of Theorem \ref{introqis} is local and relies on the Brieskorn-Orlik-Solomon theorem. Another central technical tool is the weight filtration~$W$ on~$\Omega^\bullet_{\langle X,L\rangle}$: we define~$W_k\Omega^\bullet_{\langle X,L\rangle}\subset \Omega^\bullet_{\langle X,L\rangle}$ to be the subcomplex spanned by the forms (\ref{eqlogformsncd1}) with~$s\leq k$. In view of the isomorphism (\ref{isoiso}), we get a filtration on the cohomology of~$X\setminus L$ which is proved to be defined over~$\Q$. Together with the Hodge filtration~$F^p\Omega^\bullet_{\langle X,L\rangle}=\Omega^{\geq p}_{\langle X,L\rangle}$, it defines a mixed Hodge structure on~$H^\bullet(X\setminus L)$. The functoriality of our construction then implies that this is the same as the mixed Hodge structure defined by Deligne.\\
		
		According to the general theory of mixed Hodge structures, the hypercohomology spectral sequence associated to the weight filtration degenerates at the~$E_2$-term, hence the~$E_1$-term gives a model for the cohomology of~$X\setminus L$. We then prove that this model is indeed the Orlik-Solomon model~$M^\bullet(X,L)$. This concludes the proof of Theorem \ref{intromaintheorem}.

	\subsection{Outline of this article}
	
		In \S$2$ we recall some classical facts about the Orlik-Solomon algebra and the Brieskorn-Orlik-Solomon theorem in the framework of  hyperplane arrangements, and introduce the Orlik-Solomon algebra of a hypersurface arrangement.\\
		In \S$3$, we introduce the complex of logarithmic forms along a  hyperplane arrangement and its weight filtration, and prove the local form (Theorem \ref{qis}) of the comparison theorem \ref{introqis}. Then we globalize our results to the framework of hypersurface arrangements (Theorem \ref{qisglobal}).\\
		In \S$4$, we use the formalism of mixed Hodge complexes to give an alternative definition of the mixed Hodge structure on the cohomology of~$X\setminus L$. This allows us to prove Theorem \ref{intromaintheorem} (Theorem \ref{maintheorem}).\\
		In \S$5$, we study the functoriality of the Orlik-Solomon model with respect to blow-ups, giving explicit formulas (Theorem \ref{formulaMpi}).\\
		In \S$6$, we apply our results to configuration spaces of points on curves and prove (Theorem \ref{compbloch}) the isomorphism between the Orlik-Solomon model and the model proposed by Kriz and Totaro and generalized by Bloch.

	\subsection{Conventions and notations}
		
		\begin{enumerate}
		\item \textit{(Coefficients)} Unless otherwise stated, all vector spaces, algebras, as well as tensor products of such objects, are implicitly defined over~$\Q$. All (mixed) Hodge structures are implicitly defined over~$\Q$.
		\item \textit{(Cohomology)} If~$Y$ is a complex manifold, we will simply write~$H^p(Y)$ for the~$p$-th singular cohomology group of~$Y$ with rational coefficients. We will write~$H^p(Y,\C)=H^p(Y)\otimes\C$ for the~$p$-th singular cohomology group of~$Y$ with complex coefficients. This group is naturally isomorphic, via the de Rham isomorphism, to the~$p$-th de Rham cohomology group of~$Y$ tensored with~$\C$, hence we allow ourselves to use smooth differential forms as representatives for cohomology classes.
		\end{enumerate}
		
	\subsection{Acknowledgements}	
		
		The author thanks Francis Brown for many corrections and comments on this article, Spencer Bloch for helpful discussions and for giving him a preliminary version of~\cite{blochtreeterated}, Eduard Looijenga for pointing out to him the reference~\cite{looijenga} which helped simplify the presentation, Christin Bibby, Alexandru Dimca and H\'{e}l\`{e}ne Esnault for useful comments on a preliminary version. This work was partially supported by ERC grant 257638 ``Periods in algebraic geometry and physics''.

\section{The Orlik-Solomon algebra of a hypersurface arrangement}
	
		We first recall some classical facts about  hyperplane arrangements. The interested reader will find more details in the expository book~\cite{orlikterao} or the survey~\cite{yuzvinskiorliksolomon}. Then we introduce hypersurface arrangements, define their Orlik-Solomon algebras and discuss their functoriality properties.
		 	
		\subsection{The Orlik-Solomon algebra of a hyperplane arrangement}\label{OScentral}
	
		A \emph{hyperplane arrangement} in~$\C^n$ is a finite set~$L$ of hyperplanes of~$\C^n$, all containing the origin.\footnote{In many references, this would be called a central hyperplane arrangement.} For a matter of notation, we will implicitly fix a linear ordering on the hyperplanes and write~$L=\{L_1,\ldots,L_l\}$. Nevertheless, the objects that we will define out of a  hyperplane arrangement will be independent of such an ordering. \\
		We will use the same letter~$L$ to denote the union of the hyperplanes:
		$$L=L_1\cup\cdots\cup L_l.$$
		
		For a subset~$I\subset\{1,\ldots, l\}$, the \emph{stratum} of the arrangement~$L$ indexed by~$I$ is the vector space~$L_I=\bigcap_{i\in I}L_i$ with the convention~$L_{\varnothing}=\C^n$. We write~$\s_\bullet(L)$ for the set of strata of~$L$, graded by the codimension, so that~$\s_0(L)=\{\C^n\}$ and~$\s_1(L)=\{L_1,\ldots,L_l\}$. With the order given by reverse inclusion,~$\s_\bullet(L)$ is given the structure of a graded poset, called the \emph{poset} of the  hyperplane arrangement~$L$.\\
		
		We set~$\Lambda_\bullet(L)=\Lambda^\bullet(e_1,\ldots, e_l)$, the exterior algebra over~$\Q$ with a generator~$e_i$ in degree~$1$ for each~$L_i$. Let~$\delta:\Lambda_\bullet(L)\rightarrow \Lambda_{\bullet-1}(L)$ be the unique derivation of~$\Lambda_\bullet(L)$ such that~$\delta(e_i)=1$ for~$i=1,\ldots,l$.\\		
		For~$I=\{i_1<\cdots<i_k\}\subset\{1,\ldots,l\}$ we set~$e_I=e_{i_1}\wedge \cdots\wedge e_{i_k}\in \Lambda_k(L)$ with the convention~$e_\varnothing=1$. The derivation~$\delta$ is then given by the formula
		$$\delta(e_I)=\sum_{s=1}^{k}(-1)^{s-1}e_{i_1}\wedge\cdots\wedge\widehat{e_{i_s}}\wedge\cdots\wedge e_{i_k}.$$
		
		A subset~$I\subset\{1,\ldots,l\}$ is said to be \emph{dependent} (resp. \emph{independent}) if~$\mathrm{codim}(L_I)<|I|$ (resp.~$\mathrm{codim}(L_I)=|I|$), which is equivalent to saying that the linear forms defining the~$L_i$'s, for~$i\in I$, are linearly dependent (resp. independent). Let~$J_\bullet(L)$ be the homogeneous ideal of~$\Lambda_\bullet(L)$ generated by the elements~$\delta(e_I)$ for~$I\subset\{1,\ldots,l\}$ dependent. The quotient
		$$A_\bullet(L)=\Lambda_\bullet(L)/J_\bullet(L)$$
		is a graded~$\Q$-algebra called the \emph{Orlik-Solomon algebra} of the  hyperplane arrangement~$L$. It only depends on the poset of~$L$. 
		
		
		For a stratum~$S$, let~$A_S(L)$ to be the sub-vector space of~$A_\bullet(L)$ spanned by the monomials~$e_I$ for~$I$ such that~$L_I=S$. One easily sees that we have a direct sum decomposition
		\begin{equation}\label{eqdirectsumS}
		A_\bullet(L)=\bigoplus_{S\in\s_\bullet(L)}A_S(L)
		\end{equation}
		and~$A_S(L)$ only depends on the  hyperplane arrangement consisting of the hyperplanes in~$L$ that contain~$S$, and more precisely on its poset.
		
		The product in~$A_\bullet(L)$ splits with respect to the direct sum decomposition (\ref{eqdirectsumS}), with components
		\begin{equation}\label{eqprodS}
		A_S(L)\otimes A_{S'}(L)\rightarrow A_{S\cap S'}(L)
		\end{equation}
		which are zero if~$\mathrm{codim}(S\cap S')<\mathrm{codim}(S)+\mathrm{codim}{S'}$.\\
		The derivation~$\delta$ induces a derivation~$\delta:A_{\bullet}(L)\rightarrow A_{\bullet-1}(L)$ which splits with respect to the direct sum decomposition (\ref{eqdirectsumS}), with components
		\begin{equation}\label{eqderivS}
		A_S(L)\rightarrow A_{S'}(L)
		\end{equation}
		for~$S\subset S'$,~$\mathrm{codim}(S')=\mathrm{codim}(S)-1$.
		
		\subsection{Deletion and restriction}\label{delres}
	
		Let~$L=\{L_1,\ldots,L_l\}$ be a  hyperplane arrangement in~$\C^n$ such that~$l\geq 1$. In this article we will only be concerned about deletion and restriction with respect to the last hyperplane~$L_l$. The \emph{deletion} of~$L$ (with respect to~$L_l$) is the arrangement~$L'=\{L_1,\ldots,L_{l-1}\}$ in~$\C^n$. The \emph{restriction} of~$L$ (with respect to~$L_l$) is the arrangement~$L''$ on~$L_l\cong \C^{n-1}$ consisting of all the intersections of~$L_l$ with the~$L_i$'s,~$i=1,\ldots,l-1$. If the hyperplanes~$L_i$ are not in general position, it may happen that the cardinality~$l''$ of~$L''$ is less than~$l-1$.\\
		
		For all~$k$, we have a short exact sequence of~$\Q$-vector spaces, called the \emph{deletion-restriction short exact sequence}, see~\cite[Theorem 3.65]{orlikterao} or~\cite[Corollary 2.17]{yuzvinskiorliksolomon}:
		\begin{equation}\label{eqdelres}
		0\rightarrow A_k(L')\stackrel{i}{\rightarrow} A_k(L) \stackrel{j}{\rightarrow} A_{k-1}(L'')\rightarrow 0.
		\end{equation}
		 This exact sequence splits with respect to the direct sum decomposition (\ref{eqdirectsumS}). For~$S$ a stratum of~$L$, there are three cases:
		\begin{itemize}
		\item ~$S$ is not contained in~$L_l$, then it is not a stratum of~$L''$ but is a stratum of~$L'$, and we just get an isomorphism
		$$0\rightarrow A_S(L')\rightarrow A_S(L)\rightarrow 0\rightarrow 0;$$
		\item~$S$ is contained in~$L_l$ but is not a stratum of~$L'$, and we just get an isomorphism
		$$0\rightarrow 0\rightarrow A_S(L)\rightarrow A_S(L'')\rightarrow 0;$$
		\item~$S$ is contained in~$L_l$ and is a stratum of~$L'$, and we get a short exact sequence
		$$0\rightarrow A_S(L')\rightarrow A_S(L)\rightarrow A_S(L'')\rightarrow 0.$$
		\end{itemize}
		
		\subsection{The Brieskorn-Orlik-Solomon theorem}\label{secBOS}
		
		Let~$L=\{L_1,\ldots,L_l\}$ be a  hyperplane arrangement in~$\C^n$. For~$i=1,\ldots,l$ we fix a linear form~$f_i$ on~$\C^n$ such that~$L_i=\{f_i=0\}$. Such a form is unique up to a non-zero multiplicative constant. We define holomorphic~$1$-forms on~$\C^n\setminus L$:~$$\omega_i=\dfrac{df_i}{f_i}\cdot$$
		For a subset~$I=\{i_1<\cdots<i_k\}\subset\{1,\ldots,l\}$ we set~$\omega_I=\omega_{i_1}\wedge\cdots\wedge\omega_{i_k}$.\\
		Let~$\Omega^\bullet(\C^n\setminus L)$ be the algebra of global holomorphic forms on~$\C^n\setminus L$ and~$R^\bullet(L)\subset \Omega^\bullet(\C^n\setminus L)$ be the subalgebra over~$\Q$ generated by~$1$ and the forms~$\frac{1}{2i\pi}\omega_i$ for~$i=1,\ldots,l$. We define a morphism of graded algebras~$u:\Lambda_\bullet(L)\rightarrow R^\bullet(L)$ by the formula~$$u(e_i)=\dfrac{1}{2i\pi}\omega_i.$$ 
		

		A simple computation shows that~$u$ passes to the quotient and defines a map of graded algebras~$$u:A_\bullet(L)\rightarrow R^\bullet(L).$$
		
		Each form~$\frac{1}{2i\pi}\omega_i$ is closed and its class is in the cohomology of~$\C^n\setminus L$ with \textit{rational} (and even integer) coefficients, thus there is a well-defined map of graded algebras~$$v:R^\bullet(L)\rightarrow H^\bullet(\C^n\setminus L).$$
		
		\begin{thm}[Brieskorn-Orlik-Solomon theorem]\label{bos}
		The maps~$u$ and~$v$ are isomorphisms of graded algebras:
		$$A_\bullet(L) \overset{\substack{u\\\simeq}}{\longrightarrow} R^\bullet(L) \overset{\substack{v\\\simeq}}{\longrightarrow} H^\bullet(\C^n\setminus L).$$
		\end{thm}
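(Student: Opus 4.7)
My plan is to show that the composite $v \circ u : A_\bullet(L) \to H^\bullet(\C^n \setminus L)$ is an isomorphism, from which both $u$ and $v$ being isomorphisms follows formally: $u$ is surjective by the very definition of $R^\bullet(L)$ as the subalgebra generated by the $\frac{1}{2i\pi}\omega_i$'s (combined with the fact, Lemma \ref{osforms}, that $u$ factors through $A_\bullet(L)$), so if $v \circ u$ is bijective then $u$ is injective, hence bijective, and $v$ is bijective too.

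I would prove that $v \circ u$ is an isomorphism by induction on the number $l$ of hyperplanes in $L$. The base case $l=0$ is immediate since $A_\bullet(\varnothing) = \Q$ in degree zero and $\C^n$ is contractible. For the inductive step with $L = \{L_1,\ldots,L_l\}$, let $L'$ and $L''$ be the deletion and restriction arrangements introduced in \S\ref{delres}. Observe that $L_l \setminus L''$ is a smooth closed hypersurface of the smooth affine variety $\C^n \setminus L'$, with open complement $\C^n \setminus L$, which yields a Gysin long exact sequence
$$ \cdots \to H^{k-2}(L_l \setminus L'') \to H^k(\C^n \setminus L') \to H^k(\C^n \setminus L) \xrightarrow{\mathrm{Res}} H^{k-1}(L_l \setminus L'') \to \cdots $$
where the connecting homomorphism is the residue along $L_l$.

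The crucial point is that this long exact sequence breaks into short exact sequences: the residue map admits an explicit section. Indeed, by the inductive hypothesis applied to $L''$, any class in $H^{k-1}(L_l \setminus L'')$ is represented as a polynomial in the forms $\frac{1}{2i\pi}\omega_{\lambda(i)}$ for $i<l$; lifting each $\omega_{\lambda(i)}$ to $\omega_i$ and wedging with $\frac{1}{2i\pi}\omega_l$ produces a class on $\C^n \setminus L$ whose residue along $L_l$ is the original class (this is a direct local computation using $\mathrm{Res}_{L_l}(\omega_l) = 1$). With these short exact sequences in hand, I would compare them with the deletion-restriction short exact sequence (\ref{eqdelres}) via the commutative diagram
$$ \begin{array}{ccccccccc} 0 & \to & A_k(L') & \to & A_k(L) & \to & A_{k-1}(L'') & \to & 0 \\ & & \downarrow v\circ u & & \downarrow v\circ u & & \downarrow v\circ u & & \\ 0 & \to & H^k(\C^n\setminus L') & \to & H^k(\C^n\setminus L) & \to & H^{k-1}(L_l\setminus L'') & \to & 0 \end{array} $$
The outer vertical maps are isomorphisms by the inductive hypothesis (for $L'$ and for $L''$, respectively), and the five lemma then forces the middle vertical map to be an isomorphism, completing the induction.

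The main obstacle is verifying the commutativity of the right-hand square, i.e.\ that the residue map is compatible with the combinatorial map $j$ of (\ref{eqdelres}). Concretely, one must check that for $i_1 < \cdots < i_{k-1} < l$ the residue along $L_l$ of the form $\omega_{i_1} \wedge \cdots \wedge \omega_{i_{k-1}} \wedge \omega_l$ represents the cohomology class $\omega_{\lambda(i_1)} \wedge \cdots \wedge \omega_{\lambda(i_{k-1})}$, while forms not containing $\omega_l$ (i.e.\ coming from $i(A_\bullet(L'))$) are holomorphic along $L_l$ and hence have vanishing residue. This is a local computation in coordinates adapted to $L_l$, but carrying it out cleanly requires being somewhat careful with signs and with the identification of the pullback of $\omega_i$ to $L_l$ with $\omega_{\lambda(i)}$ when $L_l \cap L_i$ already appears as an intersection with several $L_j$'s. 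Establishing the section of the residue map, and using the surjectivity of $v \circ u$ that has already been obtained for $L''$, are the key ingredients to this verification.
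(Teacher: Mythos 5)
Your proposal is correct in outline, but note that the paper does not actually prove Theorem \ref{bos}: it quotes it as a classical result, with the remark after the statement attributing the isomorphy of $v$ to Brieskorn (after Arnol'd's conjecture), that of $u$ to Orlik--Solomon, and referring to \cite{orlikterao}, Theorems 3.126 and 5.89, for proofs. So any proof you give is by definition a different route from the paper; yours is essentially the standard deletion--restriction argument from the literature. Its ingredients are all sound: $u$ is surjective by construction (using Lemma \ref{osforms} to factor through $A_\bullet(L)$), so everything reduces to $v\circ u$ being bijective; $L_l\setminus L''$ is indeed a smooth closed hypersurface of $\C^n\setminus L'$ with complement $\C^n\setminus L$, giving the Gysin sequence; the inductive hypothesis for $L''$ lets you represent every class of $H^{k-1}(L_l\setminus L'')$ by a polynomial in the $\frac{1}{2i\pi}\omega_{\lambda(i)}$, and lifting and wedging with $\frac{1}{2i\pi}\omega_l$ shows the residue is surjective in every degree, which both splits the long exact sequence into short ones and makes the Gysin pushforward terms vanish; the comparison with (\ref{eqdelres}) and the short five lemma then close the induction. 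The two non-formal points you flag are exactly the ones that need care: the identification of the topological connecting homomorphism with the Leray residue of a closed logarithmic form, including the $2i\pi$ normalization (this is what makes the right-hand square commute on the nose, since $\mathrm{Res}_{L_l}\bigl((2i\pi)^{-k}\omega_I\wedge\omega_l\bigr)=(2i\pi)^{-(k-1)}\omega_{\lambda(I)}$ matches $j(e_I\wedge e_l)=e_{\lambda(I)}$ even when $\lambda$ is not injective, because the construction is insensitive to the choice of defining linear form), and the sign/orientation conventions in the Gysin sequence. Compared with the paper's choice to cite the result, your argument has the virtue of being self-contained and of running parallel to machinery the paper develops anyway (the residue sequence $(\mathcal{R})$ of \S\ref{secres} and Theorem \ref{resexact}, and the inductive alternative proof sketched in the remark after Theorem \ref{qis}); the one thing you must avoid, and do avoid, is circularity, since the paper's Theorem \ref{qis} and the functoriality of the Orlik--Solomon algebra themselves take Theorem \ref{bos} as input.
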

		
		\begin{rem}
		The fact that~$v$ is an isomorphism was first conjectured by Arnol'd~\cite{arnold} and then proved by Brieskorn~\cite{brieskorn}. The fact that~$u$ is an isomorphism was proved by Orlik and Solomon~\cite{orliksolomon}. A proof may be found in~\cite[Theorems 3.126 and 5.89]{orlikterao}.
		\end{rem}
		
	\subsection{The Orlik-Solomon algebra of a hypersurface arrangement}\label{OSglobal}
		
		We write~$\Delta=\{|z|<1\}\subset\C$ for the open unit disk and~$\Delta^n\subset\C^n$ for the unit~$n$-dimensional polydisk. Let~$X$ be a complex manifold. The following terminology is borrowed from P. Aluffi~\cite{aluffi}.
	
		\begin{defi}
		 A finite set~$L=\{L_1,\ldots,L_l\}$ of smooth hypersurfaces of~$X$ is a \emph{hypersurface arrangement} if around each point of~$X$ we may find a system of local coordinates in which each~$L_i$ is defined by a linear equation. In other words,~$X$ is covered by charts~$V\cong \Delta^n$ such that for all~$i$,~$L_i\cap V$ is the intersection of~$\Delta^n$ with a linear hyperplane in~$\C^n$.
		\end{defi}
		 
		As for  hyperplane arrangements, the objects that we will define out of a hypersurface arrangement will be independent of the linear ordering on the hypersurfaces~$L_i$. We use the same letter~$L$ to denote the union of the hypersurfaces:
		$$L=L_1\cup\cdots\cup L_l.$$
		
		The notion of hypersurface arrangement generalizes that of (simple) normal crossing divisor: a hypersurface arrangement is a normal crossing divisor if the local linear equations defining the~$L_i$'s are everywhere linearly independent, i.e. if we can always choose local coordinates such that the irreducible components~$L_i$ are coordinate hyperplanes.\\
		
		For a subset~$I\subset\{1,\ldots,l\}$, we still write~$L_I=\bigcap_{i\in I}L_i$, which is a disjoint union of complex submanifolds of~$X$. 
		A \emph{stratum} of~$L$ is a non-empty connected component of some~$L_I$; it is a complex submanifold of~$X$. We write~$\s_\bullet(L)$ for the set of strata of~$L$, graded by the codimension. We give~$\s_\bullet(L)$ the structure of a graded poset using reverse inclusion, and call it the \emph{poset} of the hypersurface arrangement~$L$.\\
		
		Let~$p$ be a point in~$X$ and~$V$ be a neighbourhood of~$p$. Then any chart~$V\cong\Delta^n$ as in the above definition defines a  hyperplane arrangement denoted~$L^{(p)}$ in~$\C^n$. It is an abuse of notation since choosing another chart gives a different  hyperplane arrangement, but it will not matter since we will only be interested in the poset of~$L^{(p)}$, which is well-defined. More intrinsically,~$L^{(p)}$ may be read off the tangent space of~$X$ at~$p$. Let~$S$ be a stratum of~$L$; since~$S$ is connected, the poset consisting of the strata of~$L^{(p)}$ that contain~$S$ is independent of the point~$p\in S$, and we may define~$$A_S(L)=A_S(L^{(p)})$$ for any choice of point~$p\in S$. Let us then define
		$$A_\bullet(L)=\bigoplus_{S\in\s_\bullet(L)}A_S(L).$$
		We now give~$A_\bullet(L)$ the structure of a graded algebra. The product
		\begin{equation}\label{eqprodglobal}
		A_S(L)\otimes A_{S'}(L)\rightarrow A_T(L)
		\end{equation}
		is non-zero only if~$T$ is a connected component of~$S\cap S'$ such that~$\mathrm{codim}(T)=\mathrm{codim}(S)+\mathrm{codim}(S')$, and is then given by (\ref{eqprodS}) by choosing any point~$p\in T$.
		
		The graded algebra~$A_\bullet(L)$ is called the\emph{ Orlik-Solomon algebra} of the hypersurface arrangement~$L$.
		
		For~$S\subset S'$ an inclusion of strata of~$L$ such that~$\mathrm{codim}(S')=\mathrm{codim}(S)-1$, we define
		\begin{equation}\label{eqderivglobal}
		A_S(L)\rightarrow A_{S'}(L)
		\end{equation}
		as in the local case (\ref{eqderivS}) by choosing any point~$p\in S$. One should note that in general the map~$A_\bullet(L)\rightarrow A_{\bullet-1}(L)$ induced by (\ref{eqderivglobal}) is not a derivation of the Orlik-Solomon algebra.
		
		\begin{rem}
		Let us assume that 
		\begin{equation}\label{connectedassumption}
		\textnormal{for all~$I$,~$L_I$ is connected.}
		\end{equation}
		The Orlik-Solomon algebra of~$L=\{L_1,\ldots,L_l\}$ thus has a presentation similar to that of a hyperplane arrangement. A subset~$I\subset\{1,\ldots,l\}$ is said to be \emph{null} if~$L_I=\varnothing$ and \emph{dependent} (resp. \emph{independent}) if~$L_I\neq \varnothing$ and~$\mathrm{codim}(L_I)<|I|$ (resp.~$\mathrm{codim}(L_I)=|I|$). Then~$A_\bullet(L)$ is the quotient of~$\Lambda^\bullet(e_1,\ldots,e_l)$ by the homogeneous ideal generated by the monomials~$e_I$ for~$I$ null and the elements~$\delta(e_I)$ for~$I$ dependent. In the case of a general hyperplane arrangement (the hyperplanes do not necessarily contain the origin), we recover the classical definition~\cite[Definition 3.45]{orlikterao}.
		Without the assumption (\ref{connectedassumption}), the Orlik-Solomon algebra may not even be generated in degree~$1$.
		\end{rem}
		
		\subsection{Functoriality of the Orlik-Solomon algebra}\label{secfunctoriality}
		
		Let~$L=\{L_1,\ldots,L_l\}$ and~$L'=\{L'_1,\ldots,L'_{l'}\}$ be  hyperplane arrangements respectively in~$\C^n$ and~$\C^{n'}$. Let~$\varphi:\Delta^n\rightarrow\Delta^{n'}$ be a holomorphic map such that~$\varphi^{-1}(L')\subset L$, i.e.~$\varphi(\Delta^n\setminus L)\subset\Delta^{n'}\setminus L'$.\\
		Then~$\varphi$ induces a map~$\varphi^*:H^\bullet(\Delta^{n'}\setminus L')\rightarrow H^\bullet(\Delta^n\setminus L)$ in cohomology. The inclusions~$\Delta^n\setminus L\subset \C^n\setminus L$ and~$\Delta^{n'}\setminus L'\subset \C^{n'}\setminus L'$ are retractions and hence induce isomorphisms in cohomology. Thus the Brieskorn-Orlik-Solomon theorem \ref{bos} implies that there is a unique map of graded algebras 
		$$A_\bullet(\varphi):A_\bullet(L')\rightarrow A_\bullet(L)$$ that fits into the following commutative square.
		
		\begin{displaymath}
		\xymatrix{
		A_\bullet(L') \ar[r]^{A_\bullet(\varphi)} \ar[d]^{\cong}& A_\bullet(L) \ar[d]^{\cong}\\
		H^\bullet(\Delta^{n'}\setminus L') \ar[r]^{\varphi^*} & H^\bullet(\Delta^n\setminus L)
		}
		\end{displaymath}
		
		For~$j=1,\ldots,l'$, there is an equality
		$$f'_j\circ\varphi=u_j\prod_i f_i^{m_{ij}}$$
		between germs at~$0$ of holomorphic functions on~$\Delta^n$, with~$u_j$ a holomorphic function such that~$u_j(0)\neq 0$ and~$m_{ij}\geq 0$. One then sees that~$A_\bullet(\varphi):A_\bullet(L')\rightarrow A_\bullet(L)$ is the unique map of graded algebras such that for~$j=1,\ldots,l'$,~$$A_1(\varphi)(e'_j)=\sum_i m_{ij}e_i.$$
	
		We may globalize this construction; if~$L$ (resp.~$L'$) is a hypersurface arrangement in a complex manifold~$X$ (resp.~$X'$), and~$\varphi:X\rightarrow X'$ a holomorphic map such that~$\varphi^{-1}(L')\subset L$, then we define
		\begin{equation}\label{functorialityS}
		A_{S,S'}(\varphi):A_{S'}(L')\rightarrow A_S(L)
		\end{equation}
		for strata~$S\in\s_\bullet(L)$ and~$S'\in\s_\bullet(L')$ by looking at~$\varphi$ in local charts and applying the above definition. It is clear that this defines a map of graded algebras~$A_\bullet(\varphi):A_\bullet(L)\rightarrow A_\bullet(L')$ that is functorial in the sense that we have~$A_\bullet(\psi\circ\varphi)=A_\bullet(\varphi)\circ A_\bullet(\psi)$ whenever this is meaningful. If~$\varphi:X\rightarrow X\times X$ is the diagonal of~$X$, then~$A_\bullet(\varphi)$ is the product morphism~$A_\bullet(L)\otimes A_\bullet(L)\rightarrow A_\bullet(L)$.

\section{Logarithmic forms and the weight filtration}

		We define and study the forms with logarithmic poles along a  hyperplane arrangement. In \S \ref{seclog}, \ref{secres}, \ref{weight}, \ref{seccomp}, we focus on  hyperplane arrangements (the local case). The main results are Theorem \ref{gr} which computes its graded pieces, and Theorem \ref{qis} which states that the logarithmic complex computes the cohomology of the complement of the hyperplane arrangement. Then in \S \ref{secglobalforms} we extend our constructions and results to the case of hypersurface arrangements (the global case).
		
		If~$Y$ is a complex manifold, we write~$\Omega^p_Y$ for the sheaf of holomorphic~$p$-forms on~$Y$ and~$\Omega^p(Y)=\Gamma(Y,\Omega^p_Y)$ for the vector space of global holomorphic~$p$-forms on~$Y$.

		\subsection{The logarithmic complex}\label{seclog}
		
		Let~$L=\{L_1,\ldots,L_l\}$ be a  hyperplane arrangement in~$\C^n$. We recall that we defined some differential forms 
		$\omega_i=\frac{df_i}{f_i}$ for~$i=1,\ldots,l$, and~$\omega_I=\omega_{i_1}\wedge\cdots\wedge\omega_{i_k}$ for~$I=\{i_1<\cdots<i_k\}$, which is zero if~$I$ is dependent.
		 
		\begin{defi}
		A meromorphic form on~$\C^n$ is said to have \emph{logarithmic poles along~$L$} if it is a linear combination over~$\C$ of forms of the type
		$\eta\wedge\omega_I$ for some~$I\subset\{1,\ldots,l\}$, where~$\eta$ is a holomorphic form on~$\C^n$.
		\end{defi}
		
		We define~$\Omega^p\langle L\rangle$ to be the~$\C$-vector space of meromorphic~$p$-forms on~$\C^n$ with logarithmic poles along~$L$. These forms are stable under the exterior differential, hence we get a complex~$\Omega^\bullet\langle L\rangle$ that embeds into the complex of holomorphic forms on~$\C^n\setminus L$:		
		$$\Omega^\bullet\langle L\rangle \hookrightarrow \Omega^\bullet(\C^n\setminus L)$$
		which we call the \emph{complex of logarithmic forms} of~$L$.
		
		\begin{rem}\label{logcomplex}
		This definition is not standard in the theory of hyperplane arrangements. In~\cite{orlikterao}, following Saito~\cite{saitologarithmic}, one defines a complex~$\Omega^\bullet(\log L)$ in the following way. Let~$Q=f_1\cdots f_l$ be a defining polynomial for the arrangement. Then~$\Omega^p(\log L)$ is the set of meromorphic~$p$-forms~$\omega$ on~$\C^n$ such that~$Q\omega$ and~$Qd\omega$ are holomorphic.\\
		We have an inclusion~$\Omega^\bullet\langle L\rangle\subset\Omega^\bullet(\log L)$ which is an equality if and only if~$L=\{L_1,\ldots,L_l\}$ is independent. For instance, in~$\C^2$ with coordinates~$x$ and~$y$, let us look at~$L_1=\{x=0\}$,~$L_2=\{y=0\}$,~$L_3=\{x=y\}$. Then~$Q=xy(x-y)$ and the closed form~$\omega=\frac{dx\wedge dy}{xy(x-y)}$ is in~$\Omega^2(\log L)$ but not in~$\Omega^2\langle L\rangle$.
		\end{rem}
		
		\subsection{Residues}\label{secres}
		
		We briefly recall the notion of residue of a form with logarithmic poles along a  hyperplane arrangement. In the case of dimension~$n=1$, this is the usual Cauchy residue in complex analysis; the general notion of residue is due to Poincar\'{e} and Leray~\cite{leray}. For residues in the setting of hyperplane arrangements, see~\cite[3.124]{orlikterao}.\\
		
		We fix a  hyperplane arrangement~$L=\{L_1,\ldots,L_l\}$ in~$\C^n$. Let~$L'$ (resp.~$L''$) the deletion (resp. the restriction) of~$L$ with respect to~$L_l=\{f_l=0\}$. Let~$\omega$ be a~$p$-form on~$\C^n$ with logarithmic poles along~$L$. Then there exists a~$(p-1)$-form~$\alpha$ and a~$p$-form~$\beta$, both of which have logarithmic poles along~$L'$, such that
		$$\omega=\alpha\wedge\omega_l+\beta.$$ 
		The form~$$\mathrm{Res}_{L_l}(\omega)=2i\pi\,\alpha_{|L_l}$$ is independent of the choices. It is a~$(p-1)$-form on~$L_l$ with logarithmic poles along~$L''$, called the \emph{residue} of~$\omega$ along~$L_l$. We then have a morphism of complexes
		$$\mathrm{Res}_{L_l}:\Omega^\bullet\langle L \rangle \rightarrow \Omega^{\bullet-1}\langle L''\rangle$$
		where~$L''$ is the restriction of~$L$ with respect to~$L_l$. We then have a sequence of morphisms of complexes
		$$(\mathcal{R}): \hspace{.2cm} 0\rightarrow \Omega^\bullet\langle L'\rangle\stackrel{i}{\rightarrow} \Omega^\bullet\langle L\rangle \overset{\mathrm{Res}_{L_l}}{\longrightarrow} \Omega^{\bullet-1}\langle L''\rangle\rightarrow 0$$
		where~$i$ is the natural inclusion. It is obvious from the definitions that~$\mathrm{Res}_{L_l}\circ i=0$, that~$i$ is injective and~$\mathrm{Res}_{L_l}$ is surjective. We will prove in the next paragraph that~$\mathrm{ker}(\mathrm{Res}_{L_l})\subset \mathrm{Im}(i)$, so that the above sequence is a short exact sequence.
		
		\begin{rem}\label{iteratedresidues}
		When taking iterated residues, one should note that they ``do not commute'' in general, even when this has a clear meaning. For example, if~$L_1=\{x=0\}$,~$L_2=\{y=0\}$,~$L_3=\{x=y\}$ in~$\C^2$ and~$\omega=\frac{dx}{x}\wedge\frac{dy}{y}\in \Omega^2\langle L\rangle$, we have~$\mathrm{Res}_{L_2\cap L_3}\mathrm{Res}_{L_2}(\omega)=(2i\pi)^2$ and~$\mathrm{Res}_{L_3\cap L_2}\mathrm{Res}_{L_3}(\omega)=0$.
		\end{rem}
		
		\subsection{The weight filtration}\label{weight}
		
		We fix a  hyperplane arrangement~$L=\{L_1,\ldots,L_l\}$ in~$\C^n$. The following terminology is borrowed from P. Deligne~\cite[3.1.5]{delignehodge2}.
		
		\begin{defi}
		For~$k\geq 0$, we define~$W_k\Omega^\bullet\langle L\rangle \subset \Omega^\bullet\langle L\rangle$ to be the subcomplex spanned by the forms that are of the type ~$\eta\wedge\omega_I$ with~$|I|\leq k$, where~$\eta$ is a holomorphic form on~$\C^n$.
		These subcomplexes define an ascending filtration
		$$W_0\Omega^\bullet\langle L\rangle \subset W_1\Omega^\bullet\langle L\rangle \subset \cdots$$ 
		on~$\Omega^\bullet\langle L\rangle$ called the \emph{weight filtration}.
		\end{defi}
		
		We have~$W_0\Omega^\bullet\langle L\rangle=\Omega^\bullet(\C^n)$ and~$W_p\Omega^p\langle L\rangle=\Omega^p\langle L\rangle$.\\
		
		By definition, the residue morphisms induce morphisms~$\mathrm{Res}_{L_l}:W_k\Omega^\bullet\langle L\rangle\rightarrow W_{k-1}\Omega^{\bullet-1}\langle L''\rangle$ which are easily seen to be surjective. Thus the sequence~$(\mathcal{R})$ induces sequences
		\begin{equation}\label{WR}(W_k\mathcal{R}): \hspace{.2cm} 0\rightarrow W_k\Omega^\bullet\langle L'\rangle\stackrel{i}{\rightarrow} W_k\Omega^\bullet\langle L\rangle \overset{\mathrm{Res}_{L_l}}{\longrightarrow} W_{k-1}\Omega^{\bullet-1}\langle L''\rangle\rightarrow 0\end{equation} and
		\begin{equation}\label{grR}(\gr_k^W\mathcal{R}): \hspace{.2cm}0\rightarrow \gr_k^W\Omega^\bullet\langle L'\rangle\stackrel{i}{\rightarrow} \gr_k^W\Omega^\bullet\langle L\rangle \overset{\mathrm{Res}_{L_l}}{\longrightarrow} \gr_{k-1}^W\Omega^{\bullet-1}\langle L''\rangle\rightarrow 0.\end{equation}
		We will prove that they are short exact sequences. For now, the only easy facts are that~$(W_k\mathcal{R})$ is exact on the left and on the right, and that~$(\gr_k^W\mathcal{R})$ is exact on the right.\\
		
		The following lemma is easily proved by choosing appropriate coordinates on~$\C^n$.
		
		\begin{lem}\label{weightres}
		Let~$I\subset\{1,\ldots,l\}$,~$|I|=k$, be an independent subset and~$\eta$ a holomorphic form on~$\C^n$. If~$\eta_{|L_I}=0$ then~$\eta\wedge\omega_I\in W_{k-1}\Omega^\bullet\langle L\rangle$.
		\end{lem}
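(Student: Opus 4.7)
The natural approach is to pick linear coordinates on $\C^n$ that are adapted to the independent set $I$ and then do a direct computation. Write $I=\{i_1<\cdots<i_k\}$. Since $I$ is independent, the linear forms $f_{i_1},\ldots,f_{i_k}$ are linearly independent on $\C^n$, so they can be completed to a system of linear coordinates $(z_1,\ldots,z_n)$ with $z_j=f_{i_j}$ for $j=1,\ldots,k$. In these coordinates $L_I=\{z_1=\cdots=z_k=0\}$ and $\omega_I=\frac{dz_1}{z_1}\wedge\cdots\wedge\frac{dz_k}{z_k}$.

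Expand the holomorphic form $\eta$ as a sum of monomials $h_J(z)\,dz_J$ indexed by subsets $J\subset\{1,\ldots,n\}$ of size $\deg\eta$, and split $\eta=\eta'+\eta''$ according to whether $J\cap\{1,\ldots,k\}\neq\varnothing$ or $J\subset\{k+1,\ldots,n\}$. Each term of $\eta'$ contains some $dz_j$ with $j\le k$, hence $\eta'\wedge\omega_I=0$. On the other hand $\eta'$ already pulls back to zero on $L_I$, so the hypothesis $\eta_{|L_I}=0$ forces $\eta''_{|L_I}=0$, i.e.\ all the coefficients of $\eta''$ vanish on $\{z_1=\cdots=z_k=0\}$. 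By the holomorphic Nullstellensatz (or just Taylor expansion in the $z_j$, $j\le k$) one can therefore write $\eta''=\sum_{j=1}^{k}z_j\,\tau_j$ for suitable holomorphic forms $\tau_j$ involving only $dz_{k+1},\ldots,dz_n$.

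Finally, compute $\eta\wedge\omega_I=\eta''\wedge\omega_I=\sum_{j=1}^{k}z_j\,\tau_j\wedge\omega_I$. The crucial observation is that the factor $z_j$ cancels the denominator in $\frac{dz_j}{z_j}$, turning that factor into the holomorphic form $dz_j$:
$$z_j\,\omega_I \;=\; \pm\, dz_j\wedge\omega_{I\setminus\{i_j\}}.$$
Thus each summand has the shape $(\text{holomorphic form})\wedge\omega_{I\setminus\{i_j\}}$, which lies in $W_{k-1}\Omega^\bullet\langle L\rangle$ because $|I\setminus\{i_j\}|=k-1$. Summing gives $\eta\wedge\omega_I\in W_{k-1}\Omega^\bullet\langle L\rangle$, as desired.

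The only slightly delicate point is the step that produces the factorisation $\eta''=\sum z_j\tau_j$; but this is just the elementary fact that a holomorphic function vanishing on a linear coordinate subspace lies in the ideal generated by the defining coordinates, applied coefficient by coefficient. Everything else is a bookkeeping computation in the adapted coordinates, with signs following the convention $\sgn(\{i\},I\setminus\{i\})=(-1)^{r-1}$ fixed in the introduction.
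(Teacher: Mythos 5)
Your proof is correct and is precisely the argument the paper has in mind: the paper only remarks that the lemma "is easily proved by choosing appropriate coordinates on $\C^n$", and your adapted coordinates $z_j=f_{i_j}$, the splitting of $\eta$, the factorisation $\eta''=\sum_j z_j\tau_j$, and the cancellation $z_j\,\omega_I=\pm\,dz_j\wedge\omega_{I\setminus\{i_j\}}$ carry out exactly that coordinate computation in full detail.
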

	
		For all~$k$, we define
		$$G_k^\bullet(L)=\bigoplus_{S\in\s_k(L)}\Omega^{\bullet-k}(S)\otimes A_S(L).$$ 
		
		This is a complex of~$\C$-vector spaces. We define a morphism of complexes~$$\Phi:G_k^\bullet(L)\rightarrow \gr_k^W\Omega^\bullet\langle L\rangle$$ in the following way. For~$I$ independent of cardinality~$k$, for~$\eta\in\Omega^{\bullet-k}(L_I)$, we set
		$$\Phi(\eta\otimes e_I)=(2i\pi)^{-k}\td{\eta}\wedge\omega_I$$
		where~$\td{\eta}\in\Omega^{\bullet-k}(\C^n)$ is any form such that~$\td{\eta}_{|L_l}=\eta$. 
		Lemma \ref{weightres} implies that this does not depend on the choice of~$\td{\eta}$ and one immediately sees that it passes to the quotient that defines the groups~$A_S(L)$. It is then easy to check that~$\Phi$ is a morphism of complexes.
		
		\begin{thm}\label{gr}
		The morphism~$\Phi:G_k^\bullet(L)\rightarrow \gr_k^W\Omega^\bullet\langle L\rangle$ is an isomorphism of complexes.
		\end{thm}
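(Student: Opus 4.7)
The plan is to proceed by induction on the number $l$ of hyperplanes in $L$, using the deletion $L'$ and restriction $L''$ with respect to $L_l$. For the base case $l=0$ there are no strata of positive codimension, and in codimension $0$ both sides equal $\Omega^\bullet(\C^n)$ with $\Phi$ the identity.

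For the inductive step, the first step is to construct a short exact sequence of complexes
\begin{equation*}
0\to G_k^\bullet(L')\to G_k^\bullet(L)\to G_{k-1}^{\bullet-1}(L'')\to 0
\end{equation*}
by combining the Orlik-Solomon deletion-restriction sequence, which splits stratum-by-stratum as described in \S\ref{delres}, with the tensor factors $\Omega^{\bullet-k}(S)$. For strata $S\subset L_l$ of codimension $k$ in $\C^n$ one uses the tautological identification $\Omega^{\bullet-k}(S)=\Omega^{(\bullet-1)-(k-1)}(S)$ coming from $\mathrm{codim}_{L_l}(S)=k-1$. The three types of strata of $L$ (not contained in $L_l$; contained in $L_l$ but not a stratum of $L'$; contained in $L_l$ and a stratum of $L'$) correspond exactly to the three cases of the Orlik-Solomon decomposition, so this is indeed a short exact sequence of complexes.

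The next step is to assemble the commutative diagram
\begin{equation*}
\xymatrix{
0\ar[r]&G_k^\bullet(L')\ar[r]\ar[d]_{\Phi_{L'}}&G_k^\bullet(L)\ar[r]\ar[d]_{\Phi_L}&G_{k-1}^{\bullet-1}(L'')\ar[r]\ar[d]_{\Phi_{L''}}&0\\
0\ar[r]&\gr_k^W\Omega^\bullet\langle L'\rangle\ar[r]^{i}&\gr_k^W\Omega^\bullet\langle L\rangle\ar[r]^{\mathrm{Res}_{L_l}}&\gr_{k-1}^W\Omega^{\bullet-1}\langle L''\rangle\ar[r]&0
}
\end{equation*}
whose bottom row is $(\gr_k^W\mathcal{R})$ from (\ref{grR}). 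Commutativity of the left square is immediate from the formula defining $\Phi$ since $A_S(L')\hookrightarrow A_S(L)$ sends $e_{I'}$ to $e_{I'}$. For the right square, applying $\Phi_L$ to $\eta\otimes e_{I'\cup\{l\}}$ (with $l\notin I'$ and $I'\cup\{l\}$ independent of size $k$, defining a stratum $S\subset L_l$) yields $(2i\pi)^{-k}\,\tilde\eta\wedge\omega_{I'}\wedge\omega_l$; its residue along $L_l$ is $(2i\pi)^{-(k-1)}\,\tilde\eta|_{L_l}\wedge\omega_{\lambda(I')}$, which is precisely $\Phi_{L''}(\eta\otimes e_{\lambda(I')})$, matching the Orlik-Solomon map $e_{I'\cup\{l\}}\mapsto e_{\lambda(I')}$.

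By the inductive hypothesis $\Phi_{L'}$ and $\Phi_{L''}$ are isomorphisms; the top row is short exact by construction; and the bottom row is both left and right exact by the observations following (\ref{grR}). A diagram chase (equivalently the snake lemma) then yields simultaneously that $\Phi_L$ is an isomorphism and that the bottom row $(\gr_k^W\mathcal{R})$ is short exact, the latter being a useful by-product. The main obstacle beyond assembling the diagram is the sign bookkeeping in the right square, verifying that the analytic residue $\mathrm{Res}_{L_l}$ and the purely combinatorial Orlik-Solomon map $e_{I'\cup\{l\}}\mapsto e_{\lambda(I')}$ agree on the nose; once this compatibility is established, the rest of the argument is formal.
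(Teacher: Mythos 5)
Your setup (induction on $l$, the deletion--restriction sequence tensored with $\Omega^{\bullet-k}(S)$, the commutative diagram over $(\gr_k^W\mathcal{R})$, the residue compatibility in the right-hand square) is exactly the paper's, but there is a genuine gap at the decisive step: you assert that the bottom row is ``left and right exact by the observations following (\ref{grR})''. The paper states only that $(W_k\mathcal{R})$ is exact on the left and right and that $(\gr_k^W\mathcal{R})$ is exact on the \emph{right}; left exactness of $(\gr_k^W\mathcal{R})$, i.e.\ injectivity of $i:\gr_k^W\Omega^\bullet\langle L'\rangle\to\gr_k^W\Omega^\bullet\langle L\rangle$, is precisely the non-obvious point. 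It amounts to the identity $\Omega^\bullet\langle L'\rangle\cap W_{k-1}\Omega^\bullet\langle L\rangle=W_{k-1}\Omega^\bullet\langle L'\rangle$: a form without poles along $L_l$ which can be written in weight $\le k-1$ using $\omega_l$ must admit such a writing not using $\omega_l$, and nothing in the definitions gives this for free. Your diagram chase needs exactly this injectivity (given $\Phi_L(y)=0$, one reduces to $y$ coming from $G_k^\bullet(L')$ and then must cancel $i$ to use injectivity of $\Phi_{L'}$), so as written the argument is circular at that point; the claim that the chase ``yields simultaneously'' short exactness of the bottom row cannot produce its own left exactness. (Also, note surjectivity of $\Phi_L$ explicitly --- it is trivial, via Lemma \ref{weightres} and the vanishing of $\omega_I$ for $I$ dependent --- since the chase that does work uses it.)

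The paper closes this gap with an extra layer of argument that your proposal is missing. First, using only the trivial surjectivity of $\Phi_L$ and the inductive isomorphisms $\Phi_{L'},\Phi_{L''}$, a chase gives exactness of $(\gr_k^W\mathcal{R})$ \emph{in the middle}. Then one considers the short exact sequence of three-term complexes
$$0\rightarrow (W_{k-1}\mathcal{R})\rightarrow (W_k\mathcal{R})\rightarrow (\gr_k^W\mathcal{R})\rightarrow 0$$
and its long exact cohomology sequence: since $(W_0\mathcal{R})$ is trivially exact, an auxiliary induction on $k$ shows each $(W_k\mathcal{R})$ is exact in the middle, hence (its left and right exactness being easy) a short exact sequence, and the long exact sequence then yields that $(\gr_k^W\mathcal{R})$ is short exact --- in particular left exact. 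Only at this stage do both rows of your diagram become exact, and the five lemma gives injectivity of $\Phi_L$, completing the induction on $l$. So the real obstacle is not the sign bookkeeping in the right square (which you handle correctly) but establishing left exactness of the graded residue sequence; you should either reproduce this $W$-filtration induction or supply an independent proof of that injectivity.
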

		
		\begin{proof}
		 The surjectivity is trivial; we prove the injectivity by induction on the cardinal~$l$ of the arrangement.\\
		 For~$l=0$, the only non-trivial case is~$k=0$ and~$\Phi$ is just the identity of~$\Omega^\bullet(\C^n)$.\\
		 Suppose that the statement is proved for arrangements of cardinality~$\leq l-1$ and take an arrangement~$L$ of cardinality~$l$. Tensoring the deletion-restriction short exact sequence from \S\ref{delres} with the complexes~$\Omega^{\bullet-k}(S)$ we get a short exact sequence of complexes of~$\C$-vector spaces
		~$$0\rightarrow G_k^\bullet(L')\rightarrow G_k^\bullet(L)\rightarrow G_{k-1}^{\bullet-1}(L'')\rightarrow 0.$$
		 We then have a diagram
		 \begin{displaymath}
		\xymatrix{
		0 \ar[r]& G_k^\bullet(L') \ar[r]\ar[d]^{\Phi} & G_k^\bullet(L) \ar[r]\ar[d]^{\Phi} & G_{k-1}^{\bullet-1}(L'') \ar[r]\ar[d]^{\Phi} & 0 \\
		0 \ar[r]& \gr_k^W\Omega^\bullet\langle L'\rangle \ar[r] & \gr_k^W\Omega^\bullet\langle L\rangle \ar[r] & \gr_{k-1}^W\Omega^{\bullet-1}\langle L''\rangle \ar[r]& 0 \\
		}
		\end{displaymath}
		where the bottom row is the sequence (\ref{grR}). This diagram is easily seen to be commutative.\\
		By the inductive hypothesis, the vertical arrows on the right and on the left are isomorphisms. Thus a diagram chase shows that the bottom row is exact in the middle.\\
		Now the complexes (\ref{WR}) and (\ref{grR}) give rise to a short exact sequence of complexes~$$0\rightarrow (W_{k-1}\mathcal{R})\rightarrow (W_k\mathcal{R}) \rightarrow (\gr_{k}^W\mathcal{R})\rightarrow 0.$$
		The long exact sequence in cohomology tells us that if~$(W_{k-1}\mathcal{R})$ is exact in the middle then it is also the case for~$(W_k\mathcal{R})$. Since~$(W_0\mathcal{R})$ is just the sequence
		$$0\rightarrow \Omega^\bullet(\C^n)\overset{\mathrm{id}}{\rightarrow} \Omega^\bullet(\C^n)\rightarrow 0\rightarrow 0,$$
		we show by induction on~$k$ shows that~$(W_k\mathcal{R})$ is exact in the middle, hence a short exact sequence, for all~$k$. Again, the long exact sequence in cohomology shows that~$(\gr_k^W\mathcal{R})$ is also a short exact sequence for all~$k$.\\
		Thus, in the above commutative diagram, both rows are exact and a diagram chase (the~$5$-lemma) shows that the middle~$\Phi$ is injective. This completes the induction and the proof of the theorem.
		 \end{proof}
		 
		\begin{rem}\label{rempsi}
		The inverse morphism~$\Psi:\gr_k^W\Omega^\bullet\langle L\rangle \rightarrow G_k^\bullet(L)$ is given, for~$\eta$ holomorphic and~$I$ independent of cardinality~$k$, by~$$\Psi(\eta\wedge\omega_I)=(2i\pi)^k\,\eta_{|L_I}\in \Omega^{\bullet-k}(L_I)$$
		For~$k=1$ this is exactly the definition of a residue, but for~$k>1$ one should note that this has nothing to do with an ``iterated residue'' (see Remark \ref{iteratedresidues}).
		\end{rem}
		
		Since~$(\mathcal{R})=(W_k\mathcal{R})$ for~$k$ large enough, the proof of Theorem \ref{gr} implies the following.
		
		\begin{thm}\label{resexact}
		The sequences~$(\mathcal{R})$,~$(W_k\mathcal{R})$ and~$(\gr_k^W\mathcal{R})$ are short exact sequences of complexes.
		\end{thm}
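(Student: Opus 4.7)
The plan is to extract Theorem \ref{resexact} from the machinery already set up in the proof of Theorem \ref{gr}. All three sequences share the same left map $i$ and right map $\mathrm{Res}_{L_l}$, so injectivity, surjectivity, and the zero composition need only be verified once. Injectivity of $i$ is built into its definition as an inclusion of subcomplexes. Surjectivity of $\mathrm{Res}_{L_l}$ follows from the explicit formula: any generator $\eta\wedge\omega_{\lambda(I')}$ of $\Omega^{\bullet-1}\langle L''\rangle$ lifts to $\frac{1}{2i\pi}\widetilde{\eta}\wedge\omega_{I'\cup\{l\}}$, where $\widetilde{\eta}$ is any holomorphic extension of $\eta$ to $\C^n$, and this lift respects the weight filtration in the expected way (the target sits in $W_{k-1}$ exactly when the lift sits in $W_k$). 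The identity $\mathrm{Res}_{L_l}\circ i = 0$ is immediate from the residue formula.

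The remaining content is middle exactness, and I would get it by a combined induction on the weight index $k$ and the cardinality $|L|$. The filtration inclusions yield a short exact sequence of sequences of complexes
$$0 \longrightarrow (W_{k-1}\mathcal{R}) \longrightarrow (W_k\mathcal{R}) \longrightarrow (\gr_k^W\mathcal{R}) \longrightarrow 0,$$
so the column-by-column long exact cohomology sequence forces that middle exactness of any two rows implies middle exactness of the third. The base case is
$$(W_0\mathcal{R}):\ 0 \to \Omega^\bullet(\C^n) \overset{\mathrm{id}}{\rightarrow} \Omega^\bullet(\C^n) \to 0 \to 0,$$
which is trivially short exact. It therefore suffices to prove middle exactness of $(\gr_k^W\mathcal{R})$ for each $k \geq 1$.

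For this I would invoke the commutative diagram from the proof of Theorem \ref{gr}, comparing $(\gr_k^W\mathcal{R})$ to the top row
$$0 \to G_k^\bullet(L') \to G_k^\bullet(L) \to G_{k-1}^{\bullet-1}(L'') \to 0,$$
which is obtained by tensoring the deletion-restriction short exact sequence of \S\ref{delres} with the holomorphic form complexes and hence is itself short exact. An outer induction on $|L|$ delivers that the extremal vertical maps $\Phi$ are isomorphisms, so the five-lemma forces middle exactness of $(\gr_k^W\mathcal{R})$. Short-exactness of $(\mathcal{R})$ then drops out by viewing $(\mathcal{R})$ as the directed union $\bigcup_k (W_k\mathcal{R})$ and passing to the colimit.

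The genuine obstacle is that Theorems \ref{gr} and \ref{resexact} cannot be proved independently: the five-lemma step requires that $\Phi$ be an isomorphism for strictly smaller arrangements $L'$ and $L''$, which is exactly the statement of Theorem \ref{gr}. The two results are therefore a single nested induction on $(|L|,k)$, and Theorem \ref{resexact} is read off at the end as an immediate corollary.
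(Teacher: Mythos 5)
Your overall strategy coincides with the paper's: these exactness statements are proved inside the proof of Theorem \ref{gr} by exactly the nested induction you describe (a diagram chase against the deletion-restriction sequence tensored with the form complexes, then the long exact sequence attached to $0\to(W_{k-1}\mathcal{R})\to(W_k\mathcal{R})\to(\gr_k^W\mathcal{R})\to 0$ with base case $(W_0\mathcal{R})$), and Theorem \ref{resexact} is then read off because $(\mathcal{R})=(W_k\mathcal{R})$ for $k$ large. But one step of your write-up is wrong as stated: the claim that injectivity of $i$ ``need only be verified once'' because it is ``built into its definition as an inclusion of subcomplexes.'' That is true for $(\mathcal{R})$ and $(W_k\mathcal{R})$, but not for $(\gr_k^W\mathcal{R})$: passing to associated graded does not preserve injectivity. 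Injectivity of $\gr_k^W\Omega^\bullet\langle L'\rangle\to\gr_k^W\Omega^\bullet\langle L\rangle$ amounts to the non-formal identity $W_k\Omega^\bullet\langle L'\rangle\cap W_{k-1}\Omega^\bullet\langle L\rangle=W_{k-1}\Omega^\bullet\langle L'\rangle$, i.e.\ that the weight filtration of the deleted arrangement is induced by that of $L$: a form with poles only along $L'$ which can be written with at most $k-1$ logarithmic factors possibly involving $\omega_l$ must be rewritten with at most $k-1$ factors not involving $\omega_l$, and this is part of what is being proved. The paper is explicit on this point: the ``easy facts'' are left and right exactness of $(W_k\mathcal{R})$ but only right exactness of $(\gr_k^W\mathcal{R})$.

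The gap is small and is closed inside your own induction exactly as in the paper: once $(W_{k-1}\mathcal{R})$ and $(W_k\mathcal{R})$ are known to be short exact, the same column-wise long exact sequence you invoke gives the vanishing of the leftmost cohomology of $(\gr_k^W\mathcal{R})$, which is the missing injectivity; so you should delete the ``verified once'' claim and add this final step. Two smaller remarks: the argument yielding middle exactness of the graded row is a direct diagram chase rather than the five lemma, and it uses surjectivity of the middle $\Phi$ (trivial, but worth saying); and ``middle exactness of any two rows implies the third'' is only valid in the direction you actually use, since the other directions would require the as-yet-unknown left exactness of the graded row.
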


		\subsection{The comparison theorem}\label{seccomp}
		
		\begin{thm}\label{qis}
		The inclusion~$\Omega^\bullet\langle L\rangle \hookrightarrow \Omega^\bullet(\C^n\setminus L)$ is a quasi-isomorphism.
		\end{thm}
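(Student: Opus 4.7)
The plan is to proceed by induction on $l=|L|$, paralleling the inductive structure used in the proof of Theorem \ref{gr}. The base case $l=0$ is immediate: both complexes equal $\Omega^\bullet(\C^n)$, whose cohomology is $\C$ concentrated in degree $0$ by the holomorphic Poincar\'{e} lemma on the contractible Stein manifold $\C^n$.

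For the inductive step, the key idea is to compare the long exact sequence in cohomology obtained from the short exact sequence $(\mathcal{R})$ of Theorem \ref{resexact} with the Gysin long exact sequence associated to the smooth closed hypersurface $V=L_l\setminus L''$ inside the open Stein manifold $X=\C^n\setminus L'$, whose complement is $X\setminus V=\C^n\setminus L$. The three inclusions $\Omega^\bullet\langle L'\rangle\hookrightarrow\Omega^\bullet(\C^n\setminus L')$, $\Omega^\bullet\langle L\rangle\hookrightarrow\Omega^\bullet(\C^n\setminus L)$ and $\Omega^\bullet\langle L''\rangle\hookrightarrow\Omega^\bullet(L_l\setminus L'')$ will assemble into a morphism between the two long exact sequences; since each of the complex manifolds involved is Stein (a complement of an analytic hypersurface in a Stein manifold), its holomorphic de Rham complex computes singular cohomology with complex coefficients, so the right-hand sequence really is the Gysin sequence on cohomology. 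By the inductive hypothesis applied to the arrangement $L'$ (with $l-1$ hyperplanes in $\C^n$) and to the arrangement $L''$ (with at most $l-1$ hyperplanes in $L_l\cong\C^{n-1}$), the outer vertical arrows in this morphism of long exact sequences are isomorphisms, and the five-lemma will then force the middle arrow to be an isomorphism as well.

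The main technical obstacle is verifying the commutativity of the square involving the two connecting maps, namely that the algebraic residue $\mathrm{Res}_{L_l}$ defined in \S\ref{secres} agrees on cohomology with the topological Gysin residue. This comparison is classical: if $\omega\in\Omega^\bullet\langle L\rangle$ decomposes locally as $\omega=\alpha\wedge\omega_l+\beta$ with $\alpha,\beta$ logarithmic along $L'$ but without pole along $L_l$, then the Gysin residue of $\omega$ is computed by integrating over a small circle linking $L_l$, and the one-variable Cauchy residue formula applied fibrewise over $L_l$ yields $2i\pi\,\alpha_{|L_l}$, which is precisely $\mathrm{Res}_{L_l}(\omega)$ by Proposition \ref{existenceres}. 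The commutativity of the other square, involving the pullback maps induced by the open immersion $\C^n\setminus L\hookrightarrow\C^n\setminus L'$, is automatic since the vertical and horizontal maps in that square all arise from literal inclusions of complexes and therefore commute strictly at the chain level, before passing to cohomology.
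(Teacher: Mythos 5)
Your overall strategy is viable, and in fact it is essentially the alternative route the paper itself alludes to in the remark following its proof (the paper's actual argument runs the spectral sequence of the weight filtration and invokes Theorem \ref{gr}). But as written there is a genuine gap in the five-lemma step. The ladder between the long exact sequence of $(\mathcal{R})$ and the topological Gysin sequence contains \emph{three} families of squares, not two: (i) inclusion versus restriction $H^k(\C^n\setminus L')\to H^k(\C^n\setminus L)$, which you correctly observe commutes on the nose; (ii) the chain-level residue $\mathrm{Res}_{L_l}$ versus the topological residue $H^k(\C^n\setminus L)\to H^{k-1}(V)$, which you treat as the main obstacle; and (iii) the connecting homomorphism $\partial\colon H^{k-1}(\Omega^\bullet\langle L''\rangle)\to H^{k+1}(\Omega^\bullet\langle L'\rangle)$ of the short exact sequence $(\mathcal{R})$ versus the Gysin pushforward $H^{k-2}(V)\to H^k(\C^n\setminus L')$. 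You never address (iii). The oversight is easy to make because the connecting maps sit in different positions in the two sequences: in the algebraic sequence the residue is induced by an honest map of complexes and it is $\partial$ that plays the role of the Gysin pushforward, whereas in the topological sequence the residue \emph{is} the connecting map. Commutativity of (iii) is not automatic, and it is genuinely needed: in the standard diagram chase both the injectivity and the surjectivity of the middle vertical map use the outer squares, which here are exactly the squares of type (iii). (The $2i\pi$ normalisation mismatch in (ii) is harmless, since rescaling maps of the bottom row by nonzero constants preserves exactness, but square (iii) is a real assertion requiring proof, of difficulty comparable to (ii): one must lift a class $[\rho]$ to a logarithmic form such as $\tilde\rho\wedge\frac{df_l}{f_l}$, compute its differential, and compare with cup product against the Thom class of the normal bundle of $V$.)

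Two ways to repair this. Either verify (iii) by such a tube/Thom-class computation, or -- more in the spirit of the paper's remark -- sidestep the topological Gysin sequence and the five lemma altogether. Indeed, surjectivity of $H^k(\Omega^\bullet\langle L\rangle)\to H^k(\C^n\setminus L,\C)$ is free from the Brieskorn--Orlik--Solomon theorem \ref{bos}: the classes $[\omega_I]$ generate the right-hand side and the forms $\omega_I$ already lie in $\Omega^\bullet\langle L\rangle$. Injectivity then follows from the dimension bound $\dim H^k(\Omega^\bullet\langle L\rangle)\le\dim A_k(L)\otimes\C=\dim H^k(\C^n\setminus L,\C)$, which you get by induction on $l$ from the long exact sequence of $(\mathcal{R})$ together with the deletion--restriction sequence (\ref{eqdelres}): $\dim H^k(\Omega^\bullet\langle L\rangle)\le\dim H^k(\Omega^\bullet\langle L'\rangle)+\dim H^{k-1}(\Omega^\bullet\langle L''\rangle)=\dim A_k(L')+\dim A_{k-1}(L'')=\dim A_k(L)$. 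This version needs no compatibility of connecting maps at all, only the exactness of $(\mathcal{R})$ from Theorem \ref{resexact}. Compared with the paper's proof via the weight spectral sequence, your inductive route (once completed) proves the same statement but hides the relationship between the weight filtration and the Orlik--Solomon algebra, which is precisely what the paper's chosen proof is designed to exhibit.
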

				
		\begin{proof}
		Since~$\C^n\setminus L$ is a smooth affine algebraic variety over~$\C$, the cohomology of~$\Omega^\bullet(\C^n\setminus L)$ is the cohomology of~$\C^n\setminus L$ with complex coefficients. Thus we have to prove that the natural map
		$$H^p(\Omega^\bullet\langle L\rangle)\rightarrow H^p(\C^n\setminus L,\C)$$ is an isomorphism for all~$p$. We proceed by induction on the cardinality~$l$ of the arrangement. For~$l=0$ the statement is trivial. To pass from~$l-1$ to~$l$ we consider the commutative diagram
		\begin{displaymath}
		\xymatrix{
		\cdots \ar[r]&H^p(\Omega^\bullet\langle L'\rangle) \ar[r]\ar[d] & H^p(\Omega^\bullet\langle L\rangle) \ar[r]\ar[d] & H^{p-1}(\Omega^\bullet\langle L''\rangle) \ar[r]\ar[d] & \cdots \\
		0 \ar[r]& H^p(\C^n\setminus L') \ar[r] & H^p(\C^n\setminus L) \ar[r] & H^{p-1}(L_l\setminus L'') \ar[r]& 0 \\
		}
		\end{displaymath}
		The first row is the long exact sequence in cohomology associated to~$(\mathcal{R})$, the second row is induced by the deletion-restriction exact sequence via the Brieskorn-Orlik-Solomon theorem. Both rows are exact. By induction the vertical arrows on the left and on the right are isomorphisms. A classical diagram chase implies that the vertical arrow in the middle is also an isomorphism.
		\end{proof}
		
		\begin{rem}\label{conjterao}
		We have the inclusions of complexes
		$$\Omega^\bullet\langle L\rangle \overset{i_1}{\hookrightarrow} \Omega^\bullet(\log L) \overset{i_2}{\hookrightarrow} \Omega^\bullet(\C^n\setminus L)$$
		where~$\Omega^\bullet(\log L)$ has been defined in Remark \ref{logcomplex}.\\
		A conjecture by H. Terao~\cite{teraologarithmic} states that~$i_2$ is a quasi-isomorphism. According to Theorem \ref{qis}, the composite~$i_2\circ i_1$ is a quasi-isomorphism, hence Terao's conjecture is equivalent to the fact that~$i_1$ is a quasi-isomorphism. This is equivalent to the acyclicity of the quotient complex~$\Omega^\bullet(\log L)/\Omega^\bullet\langle L\rangle$.
		\end{rem}

	\subsection{Logarithmic forms along hypersurface arrangements}\label{secglobalforms}

		In this paragraph we globalize the definitions of the logarithmic complex and the weight filtration. As in the local case, we determine the weight-graded parts of the logarithmic complex and prove a comparison theorem. This generalizes the case of normal crossing divisors, studied by Deligne in~\cite[3.1]{delignehodge2}.\\
	
		Let~$X$ be a complex manifold and~$L$ a hypersurface arrangement in~$X$. A meromorphic form on~$X$ is said to have \emph{logarithmic poles along~$L$} if it is locally a linear combination over~$\C$ of forms of the type
		\begin{equation}
		\label{localform}\eta\wedge\frac{df_{i_1}}{f_{i_1}}\wedge\cdots\wedge\frac{df_{i_r}}{f_{i_r}}
		\end{equation}
		 with~$\eta$ holomorphic and the~$f_i$'s local defining (linear) equations for the~$L_i$'s. The meromorphic forms on~$X$ with logarithmic poles along~$L$ form a complex of sheaves of~$\C$-vector spaces on~$X$, that we denote by~$\Omega^\bullet_{\langle X,L\rangle}$. As in the local setting (Remark \ref{logcomplex}), we should point out that~$\Omega^\bullet_{\langle X,D\rangle}$ differs from Saito's complex~$\Omega^\bullet_X(\log L)$ if~$L$ is not a normal crossing divisor.
		
		It was pointed out to us by A. Dimca that the sheaves~$\Omega^1_{\langle X,L\rangle}$ have been previously defined in~\cite{catanesehostenkhetansturmfels} (where they are denoted~$\Omega_X(\log L)$) and~\cite{dolgachevlogarithmic} (where they are denoted~$\tilde{\Omega}_X(\log L)$).\\
		 
		 We globalize the \emph{weight filtration} on~$\Omega^\bullet_{\langle X,L\rangle}$ to get subcomplexes of sheaves~$W_k\Omega^\bullet_{\langle X,L\rangle}\subset\Omega^\bullet_{\langle X,L\rangle}$.\\
		
		The complex of sheaves~$\Omega^\bullet_{\langle X,L\rangle}$ is functorial in~$(X,L)$ in the following sense. If~$L'$ is another hypersurface arrangement in a complex manifold~$X'$, and if we have a holomorphic map~$\varphi:X\rightarrow X'$ such that~$\varphi^{-1}(L')\subset L$, then there is a pull-back map
		$$\varphi^*:\varphi^{-1}\Omega^\bullet_{\langle X',L' \rangle}\rightarrow \Omega^\bullet_{\langle X,L \rangle}$$
		that is compatible with composition in the usual sense. This follows from the discussion in \S\ref{secfunctoriality}. The weight filtration is also functorial.\\

		For a stratum~$S$ we denote by~$i_S:S\hookrightarrow X$ the closed immersion of~$S$ inside~$X$. We globalize the definition of~$G_k^\bullet(L)$ from \S\ref{weight} and define a complex of sheaves of~$\C$-vector spaces on~$X$:
		$$\mathcal{G}^\bullet_k(X,L)=\bigoplus_{S\in\s_k(L)} (i_S)_*\Omega^{\bullet-k}_{S} \otimes A_S(L).$$
		
		As in the local case, we may define a morphism of complexes of sheaves
		$$\Phi:\mathcal{G}^\bullet_k(X,L)\rightarrow \gr_k^W\Omega^\bullet_{\langle X,L\rangle}$$
		by putting
		$$\Phi(\eta\otimes e_I)=(2i\pi)^{-k}\,\td{\eta}\wedge\frac{df_{i_1}}{f_{i_1}}\wedge\cdots\wedge\frac{df_{i_k}}{f_{i_k}}$$
		for~$I=\{i_1<\cdots<i_k\}$,~$\eta\in\Omega^{\bullet-k}_S$ a local section,~$\td{\eta}\in\Omega^{\bullet-k}_X$ a local extension of~$\eta$, and the~$f_i$'s local equations for the~$L_i$'s. This definition is independent from the choice of the local equations~$f_i$. The following theorem is a global version of Theorem \ref{gr}. 
		
		\begin{thm}\label{grglobal}
		The morphism~$\Phi:\mathcal{G}^\bullet_k(X,L)\rightarrow \gr_k^W\Omega^\bullet_{\langle X,L\rangle}$ is an isomorphism.
		\end{thm}
		
		\begin{proof}
		It is enough to prove that for every chart~$V\cong\Delta^n$ on which~$L$ is a  hyperplane arrangement, the morphism
		$$\Gamma(V,\mathcal{G}^\bullet_k(X,L))\rightarrow \Gamma(V,\gr_k^W\Omega^\bullet_{\langle X,L\rangle})$$ is an isomorphism. This is exactly Theorem \ref{gr} with the ambient space~$\C^n$ replaced by the polydisk~$\Delta^n$. One can check that the proof of Theorem \ref{gr} can be copied word for word in that local setting.
		\end{proof}
		
		\begin{rem}
		The inverse morphism~$\Psi: \gr_k^W\Omega^\bullet_{\langle X,L\rangle}\rightarrow\mathcal{G}^\bullet_k(X,L)$ is given locally by the same formula as in Remark \ref{rempsi}. As already noted, this should not be mistaken with an iterated residue, unless~$L$ is a normal crossing divisor (in this case, Deligne calls~$\Psi$ the Poincar\'{e} residue, see~\cite[3.1.5.2]{delignehodge2}).
		\end{rem}

		Let~$j:X\setminus L\hookrightarrow X$ be the open immersion of the complement of~$L$ inside~$X$. The following theorem is a global version of Theorem \ref{qis}.
		
		\begin{thm}\label{qisglobal}
		The inclusion~$\Omega^\bullet_{\langle X,L\rangle}\hookrightarrow j_*\Omega^\bullet_{X\setminus L}$ is a quasi-isomorphism.
		\end{thm}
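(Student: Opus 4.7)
The plan is to verify that the inclusion is a quasi-isomorphism on stalks at every point $x\in X$. At $x\in X\setminus L$ both sheaves restrict to $\Omega^\bullet_X$ in a neighbourhood of $x$ and the inclusion is the identity, so only points $x\in L$ matter. At such a point I would shrink to a chart $V\cong\Delta^n$ in which $L$ is cut out by a central hyperplane arrangement $L^{(x)}$ and compute both stalk cohomologies.

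For the right-hand side, shrinking polydisks around $x$ form a cofinal system of neighbourhoods, and each $V\setminus L$ is Stein (as the complement of a closed analytic subset in a Stein open) and deformation retracts onto $\C^n\setminus L^{(x)}$ by scaling. Cartan's theorem $B$ together with the holomorphic Poincar\'e lemma then gives
\[
H^p\!\left((j_*\Omega^\bullet_{X\setminus L})_x\right)=\varinjlim_{V}H^p(\Omega^\bullet(V\setminus L))=H^p(\C^n\setminus L^{(x)},\C),
\]
and Theorem \ref{bos} identifies the latter with $A_p(L^{(x)})\otimes\C$.

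For the left-hand side, I would transpose the spectral sequence argument from the proof of Theorem \ref{qis} to the sheaf setting. The weight filtration on $(\Omega^\bullet_{\langle X,L\rangle})_x$ yields a spectral sequence whose $E_0$-page, via the isomorphism $\Phi$ just established, is
\[
E_0^{-p,q}=\mathcal{G}^{-p+q}_p(X,L)_x\cong\bigoplus_{\substack{S\in\s_p(L)\\x\in S}}\Omega^{-2p+q}_{S,x}\otimes A_S(L),
\]
with differential the exterior derivative along the strata. The holomorphic Poincar\'e lemma applied in each stratum through $x$ collapses $\Omega^\bullet_{S,x}$ to $\C$ concentrated in degree $0$, so $E_1^{-p,q}$ equals $A_p(L^{(x)})\otimes\C$ when $q=2p$ and vanishes otherwise (using that $A_S(L)=A_S(L^{(x)})$ for $x\in S$). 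For bidegree reasons all subsequent differentials are zero, the spectral sequence degenerates at $E_1$, and $H^p((\Omega^\bullet_{\langle X,L\rangle})_x)\cong A_p(L^{(x)})\otimes\C$.

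It remains to check that the inclusion induces the obvious identification of the two descriptions: for independent $I$ with $x\in L_I$ and $|I|=p$, the form $\omega_I$ represents $(2i\pi)^p e_I$ on both sides, on the left via the explicit formula for $\Psi$ in Remark \ref{rempsi} and on the right via the Brieskorn-Orlik-Solomon isomorphism. The main obstacle I expect is the Stein-theoretic input on the right (commuting filtered colimits with cohomology, plus Cartan $B$ to pass from global holomorphic sections to hypercohomology); once that step supplies the correct stalk cohomology of $j_*\Omega^\bullet_{X\setminus L}$, everything else is a formal sheaf-theoretic copy of the proof of Theorem \ref{qis}, made possible by the fact that the graded pieces of the weight filtration have already been identified explicitly with $\mathcal{G}^\bullet_k(X,L)$.
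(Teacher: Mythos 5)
Your proposal is correct and takes essentially the same route as the paper: reduce to polydisk charts/stalks and rerun the proof of Theorem \ref{qis} there, using the weight-filtration spectral sequence whose $E_0$-page is identified by Theorem \ref{gr}, the contractibility of the local strata, and the Brieskorn-Orlik-Solomon theorem transported via the retraction $\Delta^n\setminus L\subset\C^n\setminus L$; your explicit Stein/Cartan B treatment of the holomorphic de Rham step is a detail the paper leaves implicit in its ``word for word'' transcription. One small correction: the complement of an arbitrary closed analytic subset of a Stein open need not be Stein --- what you actually need (and what holds here) is that $L$ is a hypersurface, so that $V\setminus L=\{f_1\cdots f_l\neq 0\}$ is Stein.
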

		
		\begin{proof}
		It is enough to prove that for every chart~$V\cong\Delta^n$ on which~$L$ is a  hyperplane arrangement, the morphism
		$$\Gamma(V,\Omega^\bullet_{\langle X,L\rangle}) \rightarrow \Gamma(V,j_*\Omega^\bullet_{X\setminus L})=\Omega^\bullet(V\setminus  L)$$
		is a quasi-isomorphism. This is exactly Theorem \ref{qis} with the ambient space~$\C^n$ replaced by the polydisk~$\Delta^n$. One can check that the proof of Theorem \ref{qis} can be copied word for word in the local setting. The argument that the strata~$L_I$ are contractible has to be replaced by the fact that the local strata~$\Delta^n\cap L_I$ are contractible (because they are polydisks). The Brieskorn-Orlik-Solomon theorem remains true in the local setting because the inclusion~$\Delta^n\setminus L\subset\C^n\setminus L$ is a retraction and hence induces an isomorphism in cohomology.
		\end{proof}
		
\section{A functorial mixed Hodge structure and the Orlik-Solomon model}

	If~$X$ is a smooth projective variety and~$L$ is a hypersurface arrangement in~$X$, we put a functorial mixed Hodge structure on the cohomology of the complement~$X\setminus L$. Our construction mimicks Deligne's~\cite{delignehodge2} in the case of normal crossing divisors.
		
	\subsection{Mixed Hodge complexes}
		
		We refer to~\cite[7.1,8.1]{delignehodge3} for the definitions of mixed Hodge complexes. If~$\mathbb{K}$ is a field, the filtered (resp. bifiltered) derived category of (bounded from above) complexes of~$\mathbb{K}$-vector spaces on a complex manifold~$Y$ is denoted by~$\mathrm{D^+F}(Y,\mathbb{K})$ (resp.~$\mathrm{D^+F}_2(Y,\mathbb{K})$). A cohomological mixed Hodge complex on~$Y$ is a triple 
		$$\K=((\K_\Q,W),(\K_\C,W,F),\alpha)$$
		with~$(\K_\Q,W)\in \mathrm{D^+F}(Y,\Q)$,~$(\K_\C,W,F)\in \mathrm{D^+F}_2(Y,\C)$ and~$\alpha:(\K_\Q,W)\otimes\C\cong(\K_\C,W)$ an isomorphism in~$\mathrm{D^+F}(Y,\C)$. These data must satisfy some compatibility conditions.
		
	 	The following theorem~\cite[8.1.9]{delignehodge3} is the fundamental theorem of mixed Hodge complexes. Our convention for spectral sequences uses decreasing filtrations. One passes from an increasing filtration~$\{W_p\}_{p\in\mathbb{Z}}$ to a decreasing filtration~$\{W^p\}_{p\in\mathbb{Z}}$ by putting~$W^p=W_{-p}$.
		
		\begin{thm}\label{mhc}
		Let~$Y$ be a complex manifold and $\K=((\K_\Q,W),(\K_\C,W,F),\alpha)$ be a cohomological mixed Hodge complex on~$Y$.
		\begin{enumerate}
		 \item For all~$n$, the filtration~$W[-n]$ and the filtration~$F$ define a mixed Hodge structure on~$\mathbb{H}^n(\K_\Q)$.
		 \item Let~$\wE$ be the cohomological spectral sequence defined by~$(\K_\Q,W)$. Then for all~$(p,q)$, the filtration~$F$ induces on~$\wE_1^{-p,q}=\mathbb{H}^{-p+q}(\gr_p^W\K_\Q)$ a Hodge structure of weight~$q$ and the differentials~$d_1^{-p,q}$ are morphisms of Hodge structures.
		 \item The spectral sequence~$\wE$ degenerates at~$E_2$:~$\wE_2^{-p,q}=\wE_\infty^{-p,q}=\gr_{p}^W\mathbb{H}^n(\K_\Q)=\gr_q^{W[-n]}\mathbb{H}^n(\K_\Q)$ for~$n=-p+q$.
		\end{enumerate}
		\end{thm}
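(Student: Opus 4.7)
The plan is to follow Deligne's original proof in \cite{delignehodge3}, 8.1.9, which proceeds in three stages matching the three conclusions of the theorem. Throughout, the key input is that each $\gr_k^W\K$ is a Hodge complex of weight $k$, so that $\mathbb{H}^m(\gr_k^W\K_\Q)$ carries a pure Hodge structure of weight $m+k$ whose Hodge filtration is induced by $F$.

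First I would set up the hypercohomology spectral sequence $\wE$ of the filtered complex $(\K_\Q,W)$, with abutment $\mathbb{H}^\bullet(\K_\Q)$. By definition we have $\wE_1^{-p,q}=\mathbb{H}^{-p+q}(\gr_p^W\K_\Q)$; the Hodge complex assumption puts a pure Hodge structure of weight $(-p+q)+p=q$ on this group, whose Hodge filtration is induced by $F$ via the third datum $\gr_p^W\alpha$. This proves part (2) on the level of the $E_1$-terms; the differentials $d_1$ are the connecting maps coming from the distinguished triangles $\gr_{p-1}^W\K\to W_p\K/W_{p-2}\K\to\gr_p^W\K\overset{+1}{\to}$, and one checks that these triangles are triangles of Hodge complexes (the middle term is itself a Hodge complex of weight $p-1$ up to a shift, via an extension argument), so the connecting morphism respects $F$ and is a morphism of $\Q$-Hodge structures.

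Next, degeneration at $E_2$ is a formal consequence of what precedes. The differential $d_r:\wE_r^{-p,q}\to\wE_r^{-p+r,q-r+1}$ would have to be a morphism between subquotients of pure Hodge structures of weights $q$ and $q-r+1$ respectively. By strictness of morphisms of pure Hodge structures of distinct weights, any such $d_r$ vanishes for $r\geq 2$. This gives (3). Finally for (1), the filtration $W$ on $\mathbb{H}^n(\K_\Q)$ is the one coming from the spectral sequence; shifting by $n$ so that the pure Hodge structure on $\gr_q^{W[-n]}\mathbb{H}^n(\K_\Q)=\wE_\infty^{n-q,q}$ sits in weight $q$, and taking for $F$ the filtration induced by the Hodge filtration on $\K_\C$, one obtains a candidate mixed Hodge structure.

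The main obstacle, and the place where genuine work is required, is verifying that the filtration $F$ on $\K_\C$ induces on each $\mathbb{H}^n(\K_\C)$ a filtration that is compatible with $W$ in the sense needed to produce a mixed Hodge structure, i.e.\ that $F$ and $W$ are both \emph{strict} with respect to the differential and remain so after passing to $\gr^W$. This is the content of Deligne's \emph{lemma on two filtrations} (\cite{delignehodge2}, 1.3.16), which gives a recursive criterion ensuring that the Hodge filtration $F$ on $\K_\C$ induces, via the spectral sequence, the Hodge filtration on each $\wE_\infty^{-p,q}$ predicted by pure Hodge theory on $\gr_p^W\K$. The crux is to prove $W$-strictness of $F$, which is done by induction on the length of the $W$-filtration using the fact that each $\gr_k^W\K$ is strict in $F$ (a property of pure Hodge complexes). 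Once this lemma is applied, the three announced statements follow; the identification $\wE_2=\wE_\infty$ of part (3) is compatible with $F$, the induced bigrading gives the Hodge decomposition of each $\gr_q^{W[-n]}\mathbb{H}^n$, and parts (1)--(2) are completed.
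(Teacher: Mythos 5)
This theorem is not proved in the paper at all---it is quoted from Deligne (\cite{delignehodge3}, 8.1.9)---and your proposal is essentially a faithful sketch of Deligne's original argument: purity of weight $q$ on $\wE_1^{-p,q}$ from the Hodge complex condition on $\gr_p^W\K$, vanishing of $d_r$ for $r\geq 2$ by strictness of morphisms between pure structures of distinct weights, and the two-filtrations lemma (\cite{delignehodge2}, 1.3.16) as the genuine technical crux guaranteeing that $F$ induces the same, well-behaved filtration on the $E_r$-terms so that these strictness arguments apply. The one blemish is the aside that $W_p\K_\Q/W_{p-2}\K_\Q$ is ``a Hodge complex of weight $p-1$ up to a shift'': it is not pure (its $W$-graded pieces have weights $p$ and $p-1$), but this does not damage the argument, since the compatibility of $d_1$ with $F$ is in any case what the two-filtrations lemma you invoke delivers.
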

		
	\subsection{A functorial mixed Hodge structure}
		
		Let~$X$ be a smooth projective variety over~$\C$ and~$L$ a hypersurface arrangement in~$X$. We use the previous constructions to put a functorial mixed Hodge structure on the cohomology~$H^\bullet(X\setminus L)$ of the complement, using the formalism of mixed Hodge complexes. This generalizes the case of normal crossing divisors, studied by Deligne in~\cite[3.2]{delignehodge2}, and summarized in terms of mixed Hodge complexes in~\cite[8.1.8]{delignehodge3}. We recall the notation~$j:X\setminus L\hookrightarrow X$.\\
		
		We define a triple
		$$\K(X,L)=((\K_\Q(X,L),W),(\K_\C(X,L),W,F),\alpha)$$ 
		in the following way:
		\begin{enumerate}
		\item~$\K_\Q(X,L)=Rj_*\Q_{X\setminus L}$ with the filtration~$W=\tau$, the canonical filtration~\cite[1.4.6]{delignehodge2}.
		\item~$\K_\C(X,L)=\Omega^\bullet_{\langle X,L\rangle}$ with the weight filtration~$W$ defined in \S\ref{secglobalforms}, and the Hodge filtration~$F$ defined by~$$F^p\Omega^\bullet_{\langle X,L\rangle}=\Omega^{\geq p}_{\langle X,L\rangle}.$$
		\item We have isomorphisms in~$\mathrm{D^+}(X,\C)$:~$$Rj_*\Q_{X\setminus L}\otimes\C\cong Rj_*\C_{X\setminus L}\cong j_*\Omega^\bullet_{X\setminus L}\cong\Omega^\bullet_{\langle X,L\rangle}$$ the last one being the quasi-isomorphism of the comparison theorem \ref{qisglobal}.\\ Hence we have an isomorphism~$(Rj_*\Q_{X\setminus L}\otimes\C,\tau)\cong(\Omega^\bullet_{\langle X,L\rangle},\tau)$ in~$\mathrm{D^+F}(X,\C)$. Finally the identity gives a filtered quasi-isomorphism~$(\Omega^\bullet_{\langle X,L\rangle},\tau)\cong(\Omega^\bullet_{\langle X,L\rangle},W)$, as follows from the same proof as in~\cite[3.1.8]{delignehodge2}, in view of the comparison theorem \ref{qisglobal}. This gives the isomorphism 
		$$\alpha:(Rj_*\Q_{X\setminus L},\tau)\otimes\C\cong(\Omega^\bullet_{\langle X,L\rangle},W)$$ in~$\mathrm{D^+F}(X,\C)$.
		\end{enumerate}
		
		\begin{thm}\label{mhs}
		The triple~$\K(X,L)$ is a cohomological mixed Hodge complex on~$X$, which is functorial with respect to the pair~$(X,L)$. It thus defines a functorial mixed Hodge structure on~$\mathbb{H}^n(Rj_*\Q_{X\setminus L})\cong H^n(X\setminus L)$ for all~$n$.
		\end{thm}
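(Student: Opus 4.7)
The plan is to verify the axioms of a cohomological mixed Hodge complex for $\K(X,L)$, then deduce functoriality and the existence of the MHS on $H^n(X\setminus L)$ as formal consequences of Theorem \ref{mhc}. The only real content is to show that for every integer $k$, the graded piece $\gr_k^W \K(X,L)$ is a cohomological Hodge complex of weight $k$ on $X$. Once this is done, Theorem \ref{mhc} applied to $\K(X,L)$ produces the MHS on $\mathbb{H}^n(Rj_*\Q_{X\setminus L}) = H^n(X\setminus L)$, and functoriality follows because each of the three ingredients in the definition of $\K(X,L)$ is visibly functorial in $(X,L)$ (the pull-back on $Rj_*\Q$, the pull-back on $\Omega^\bullet_{\langle X,L\rangle}$ discussed in \S\ref{secglobalforms}, and the comparison $\alpha$).

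The $\C$-component of $\gr_k^W \K(X,L)$ is handled by the global form of Theorem \ref{gr}: we have an isomorphism of complexes of sheaves
$$\gr_k^W \Omega^\bullet_{\langle X,L\rangle} \;\cong\; \bigoplus_{S\in\s_k(L)} (i_S)_* \Omega^\bullet_S[-k] \otimes A_S(L),$$
and the induced Hodge filtration $F^p = \Omega^{\geq p}_{\langle X,L\rangle}$ restricts on each summand to $(i_S)_* \Omega^{\geq p-k}_S[-k] \otimes A_S(L)$. Up to the Tate twist needed to reconcile this shifted filtration with the standard one on $\Omega^\bullet_S$, this realises $\gr_k^W \K_\C(X,L)$ as the direct sum, over strata $S$ of codimension $k$, of the cohomological Hodge complexes $(i_S)_* \K_\C(S)(-k)[-k] \otimes A_S(L)$. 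Since $S$ is smooth projective, $\K(S)$ has weight $0$, so this summand has weight $0 + 2k - k = k$, as required.

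For the $\Q$-component, the general formalism of the canonical filtration $\tau$ gives $\gr_k^\tau Rj_*\Q_{X\setminus L} \cong R^k j_* \Q_{X\setminus L}[-k]$. To match this with the $\C$-side, I compute the sheaf $R^k j_* \Q_{X\setminus L}$ locally: at a point $p$, its stalk is $H^k(U\setminus L;\Q)$ for a small neighbourhood $U$, which by the Brieskorn-Orlik-Solomon theorem (Theorem \ref{bos}) is $A_k(L^{(p)})$. The direct sum decomposition $A_\bullet(L^{(p)}) = \bigoplus_S A_S(L^{(p)})$ and the fact that the strata of $L^{(p)}$ correspond to germs of strata of $L$ passing through $p$ then yield a canonical isomorphism
$$R^k j_* \Q_{X\setminus L} \;\cong\; \bigoplus_{S\in\s_k(L)} (i_S)_* \Q_S(-k) \otimes A_S(L),$$
where the Tate twist $(-k)$ absorbs the factor $(2i\pi)^{-k}$ appearing in the $\Q$-basis $\{(2i\pi)^{-k}\omega_I\}$ of $H^k$ of the local complement.

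The main technical point is therefore to check that $\gr_k^W \alpha$ is precisely the natural $\Q$-to-$\C$ comparison relating the two descriptions above. This amounts to verifying that the map $\Phi$ of the global Theorem \ref{gr}, which contains the factor $(2i\pi)^{-k}$, sends the rational basis $e_I \mapsto (2i\pi)^{-k}\omega_I$ compatibly with the rational structure on $R^k j_* \Q_{X\setminus L}$ coming from Brieskorn-Orlik-Solomon. This is essentially a restatement of Theorem \ref{bos} applied in each local chart, combined with the functoriality of $R j_*$ (one reduces to checking the compatibility on stalks, where it is literally the content of the local BOS theorem together with the definition of $\Phi$). Once the bookkeeping of the Tate twists is done, each $\gr_k^W \K(X,L)$ is a direct sum of cohomological Hodge complexes of weight $k$ on $X$, hence is itself one, completing the verification that $\K(X,L)$ is a cohomological mixed Hodge complex and hence proving the theorem.
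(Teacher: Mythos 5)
Your proposal is correct and follows essentially the same route as the paper: identify $\gr_k^W$ of the logarithmic complex via the global version of Theorem \ref{gr}, check that the resulting decomposition into strata contributions is defined over $\Q$ compatibly with the canonical filtration on $Rj_*\Q_{X\setminus L}$ (the paper delegates this local computation to Peters--Steenbrink, Lemma 4.9, while you carry it out via stalks of $R^kj_*\Q$ and the Brieskorn--Orlik--Solomon theorem), and conclude that $\gr_k^W\K(X,L)\cong\bigoplus_{S\in\s_k(L)}(i_S)_*\K(S)[-k](-k)\otimes A_S(L)$ is a Hodge complex of weight $k$, with functoriality coming from the functoriality of the logarithmic sheaves. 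No gaps of substance; this matches the paper's argument.
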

		
		Here, functoriality has to be understood in the sense of \S\ref{secfunctoriality}.		
		
		
		\begin{proof}
		Theorem \ref{gr} gives an isomorphism 
		$$\gr_k^W\Omega^\bullet_{\langle X,L\rangle}\cong\bigoplus_{S\in\s_k(L)}(i_S)_*\Omega^{\bullet-k}_S \otimes A_S(L).$$
		A local computation as in~\cite[Lemma 4.9]{peterssteenbrink}, shows that this isomorphism is defined over~$\Q$ if we take care of the Tate twists. In other words we have a commutative diagram:
		$$
		\xymatrix{
		\gr_k^W\Omega^\bullet_{\langle X,L\rangle} \ar[r]^-{\cong} & \bigoplus_{S\in\s_k(L)}(i_S)_*\Omega^{\bullet}_S [-k]\otimes A_S(L) \\
		\gr_k^\tau Rj_*\C_U \ar[r]^-{\cong} \ar[u]^{\cong} & \bigoplus_{S\in\s_k(L)}(i_S)_*\C_S  [-k]\otimes A_S(L) \ar[u]^{\cong}\\
		\gr_k^\tau Rj_*\Q_U \ar[r]^-{\cong} \ar[u] &  \bigoplus_{S\in\s_k(L)}(i_S)_*\Q_S [-k](-k)\otimes A_S(L) \ar[u]\\
		}
		$$
		To complete the proof it is enough to notice that the top row of this diagram is compatible with the Hodge filtrations. Hence we get
		$$\gr_k^W\K(X,L)=\bigoplus_{S\in\s_k(L)} (i_S)_*\K(S)[-k](-k)\otimes A_S(L)$$ which is a cohomological Hodge complex of weight~$k$. \\
		The functoriality statement follows from the functoriality of the sheaves of logarithmic forms.
		\end{proof}
		
		The following theorem shows that the Hodge structures that we have just defined are indeed the functorial Hodge structures defined by Deligne.
		
		\begin{thm}\label{functoriality}
		Let~$U$ be a smooth quasi-projective variety over~$\C$.
		\begin{enumerate}
		\item There exists a smooth projective variety~$X$ and an open immersion~$U\hookrightarrow X$ such that the complement~$L=X\setminus U$ is a hypersurface arrangement in~$X$.
		\item Given two such compactifications~$(X_1,L_1)$ and~$(X_2,L_2)$, the mixed Hodge structures on~$H^\bullet(U)$ defined via~$(X_1,L_1)$ and~$(X_2,L_2)$ are the same.
		\item The mixed Hodge structure on~$H^\bullet(U)$ defined in Theorem \ref{mhs} is the same as the mixed Hodge structure defined by Deligne in~\cite{delignehodge2}.
		\end{enumerate}
		\end{thm}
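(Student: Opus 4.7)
The plan proceeds in three stages mirroring the three parts of the statement, ultimately reducing everything to the normal crossing case already handled by Deligne.

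For part (1), I would apply Nagata's compactification theorem to obtain some (possibly singular) projective compactification of $U$, and then invoke Hironaka's resolution of singularities for pairs to replace it by a smooth projective variety $X$ in which $L = X \setminus U$ is a simple normal crossing divisor. Since every normal crossing divisor is by definition a hypersurface arrangement, this produces the required pair $(X,L)$.

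For part (2), the strategy is to dominate both given compactifications by a common one and invoke functoriality. Given $(X_1,L_1)$ and $(X_2,L_2)$, I would consider the closure $Z$ of $U$ diagonally embedded in $X_1 \times X_2$; it is projective and birational to both $X_i$. Applying Hironaka once more, I would resolve $Z$ and blow up further so that the preimage of $L_1 \cup L_2$ becomes a normal crossing divisor, obtaining a smooth projective variety $X_3$ with normal crossing divisor $L_3 = X_3 \setminus U$ and projections $\pi_i : X_3 \to X_i$ that restrict to the identity on $U$ and satisfy $\pi_i^{-1}(L_i) \subset L_3$. The functoriality in Theorem \ref{mhs} then yields, for each $i$, a morphism of mixed Hodge structures $H^\bullet(X_i \setminus L_i) \to H^\bullet(X_3 \setminus L_3)$ which on underlying $\Q$-vector spaces is the identity of $H^\bullet(U)$. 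Hence the mixed Hodge structures defined by $(X_1,L_1)$ and by $(X_2,L_2)$ both coincide with the one defined by $(X_3,L_3)$, and therefore with each other.

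For part (3), by part (2) it suffices to compare the two constructions on a single well-chosen compactification. I would take $(X,L)$ to be a normal crossing compactification as produced in part (1). In that case $\Omega^\bullet_{\langle X,L \rangle}$ coincides with the usual logarithmic de Rham complex $\Omega^\bullet_X(\log L)$ used by Deligne; moreover the weight filtration defined in Section 3 and the Hodge filtration $F^p = \Omega^{\geq p}_{\langle X,L \rangle}$ match Deligne's filtrations exactly. Thus $\K(X,L)$ is literally Deligne's cohomological mixed Hodge complex and produces the same mixed Hodge structure on $H^\bullet(U)$.

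The main obstacle is the functoriality step in part (2): one must arrange that the common refinement $X_3$ genuinely satisfies the map-of-pairs condition $\pi_i^{-1}(L_i) \subset L_3$ so that Theorem \ref{mhs}'s functoriality applies, and then verify that the induced morphism of mixed Hodge structures reduces to the identity on the underlying $H^\bullet(U)$. The first point is the content of standard elimination of indeterminacy followed by resolution of singularities of pairs; the second is automatic because, away from the exceptional loci, $\pi_i$ restricts to the identity on $U$. Once these are in hand, part (3) is a direct bookkeeping comparison of two mixed Hodge complex structures built on the same underlying bifiltered complex.
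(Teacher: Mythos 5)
Your proposal is correct and follows essentially the same route as the paper: part (1) via Nagata plus Hironaka with a normal crossing boundary, part (2) by dominating both compactifications with a common normal crossing compactification mapping to each (identity on $U$) and invoking the functoriality of Theorem \ref{mhs}, and part (3) by reducing to the normal crossing case, where $\Omega^\bullet_{\langle X,L\rangle}=\Omega^\bullet_X(\log L)$ with matching filtrations so the construction literally coincides with Deligne's. Your extra detail (the diagonal closure in $X_1\times X_2$ and the observation that $\pi_i^{-1}(L_i)\subset L_3$ follows from $\pi_i|_U=\mathrm{id}_U$) only makes explicit what the paper leaves implicit.
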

		
		\begin{proof}
		\begin{enumerate}
		\item This follows from Nagata's compactification theorem and Hironaka's resolution of singularities. In fact, we can assume that~$L$ is a normal crossing divisor.
		\item Using resolution of singularities, we can always embed~$U$ in a smooth projective variety~$X$ such that~$X\setminus U=L$ is a simple normal crossing divisor (and hence a hypersurface arrangement), and such that there exists morphisms
		$$(X_1,X_1\setminus L_1)\leftarrow (X,X\setminus L)\rightarrow (X_2,X_2\setminus L_2)$$
		that are the identity on~$U$. Hence by functoriality the two mixed Hodge structures are isomorphic to the mixed Hodge structure defined via~$(X,L)$.
		\item The claim follows from (2) and the fact that for a given~$U$, one can always choose~$(X,L)$ such that~$L$ is a normal crossing divisor (using resolution of singularities).
		\end{enumerate}
		\end{proof}
		
	\subsection{The Orlik-Solomon spectral sequence}\label{gysinss}
		
		Let~$X$ be a smooth projective variety and~$L$ be a hypersurface arrangement in~$X$. In the previous paragraph we defined a cohomological mixed Hodge complex on~$X$ that defines a mixed Hodge structure on the cohomology of~$X\setminus L$. The general formalism of mixed Hodge complexes (Theorem \ref{mhc}) tells us that the \emph{Orlik-Solomon spectral sequence}~$\wE_r^{p,q}$ associated to the weight filtration degenerates at~$E_2$. In this section we make the~$E_1$ term explicit. We will write~$\wE_r^{p,q}=\wE_r^{p,q}(X,L)$ when confusion might occur.\\
		
		By definition we have~$\wE_1^{-p,q}=\mathbb{H}^{-p+q}(\gr_p^W\K_\Q(X,L))$. From the proof of Theorem \ref{mhs} we get
		$$\wE_1^{-p,q}\cong \bigoplus_{S\in\s_p(L)} H^{-2p+q}(S)(-p)\otimes A_S(L).$$
		
		We first study the functoriality of the Orlik-Solomon spectral sequence.
		
		\begin{prop}\label{propfunctoriality}
		Let~$L$ (resp.~$L'$) be a hypersurface arrangement in a smooth projective variety~$X$ (resp.~$X'$), and~$\varphi:X\rightarrow X'$ a holomorphic map such that~$\varphi^{-1}(L')\subset L$. Let~$S$ and~$S'$ be strata of codimension~$p$ respectively of~$L$ and~$L'$ such that~$\varphi(S)\subset S'$ and let us denote by~$\varphi_{S,S'}:S\rightarrow S'$ the restriction of~$\varphi$. Then the component of the morphism~$$\wE_1^{-p,q}(\varphi):\wE_1^{-p,q}(X',L')\rightarrow \wE_1^{-p,q}(X,L)$$ indexed by strata~$S$ and~$S'$ is obtained by tensoring the morphism (\ref{functorialityS})
		$$A_{S,S'}(\varphi):A_{S'}(L')\rightarrow A_S(L)$$
		with the pull-back morphism
		$$\varphi_{S,S'}^*:H^{-2p+q}(S')\rightarrow H^{-2p+q}(S).$$
		
		The other components of~$\wE_1^{-p,q}(\varphi)$ are zero.
		\end{prop}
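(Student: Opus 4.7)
The plan is to compute the morphism $\wE_1(\varphi)$ directly on the graded level of the weight filtration and then pass to hypercohomology, bypassing the two-step reduction (normal crossing first, then sub-arrangement trick) used in the previous two propositions. The key point is that $\wE_1(\varphi)$ is induced by the pullback of logarithmic forms $\varphi^*:\Omega^\bullet_{\langle X',L'\rangle}\to \varphi_*\Omega^\bullet_{\langle X,L\rangle}$, which respects the weight filtration. Under the identifications $\Phi$ of the global analog of Theorem~\ref{gr}, we wish to show that the $(S,S')$-component of $\varphi^*$ on $\gr_p^W$ is the tensor product of the pull-back $\varphi_{S,S'}^*:\Omega^{\bullet-p}_{S'}\to \Omega^{\bullet-p}_S$ and the Orlik-Solomon morphism $A_{S,S'}(\varphi)$. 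Taking $\mathbb{H}^{-p+q}$ then yields the proposition.

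Since the identification is local on $X$, we work in charts $V\cong\Delta^n$ and $V'\cong\Delta^{n'}$ in which $L$ and $L'$ become central hyperplane arrangements with linear defining equations $f_i$ and $f'_j$. Lemma~\ref{lemlocalfunctoriality} gives $f'_j\circ\varphi = u_j\prod_i f_i^{m_{ij}}$ with $u_j$ a unit, so $\varphi^*(\omega'_j) = du_j/u_j + \sum_i m_{ij}\omega_i$; the first summand is holomorphic, hence lies in $W_0$. Wedging these relations for $j\in J$ and working modulo $W_{p-1}$, only $p$-fold wedges of the $\omega_i$'s survive, and multilinear expansion combined with antisymmetrization identifies the coefficient of $\omega_I$ (for $|I|=p$) with the determinant
\begin{equation*}
m_{IJ} = \det\bigl(m_{ij}\bigr)_{i\in I,\,j\in J},
\end{equation*}
which is exactly the entry of the Orlik-Solomon functoriality matrix from \S\ref{secfunctoriality}. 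Hence $\varphi^*(\omega'_J)\equiv \sum_{|I|=p} m_{IJ}\,\omega_I\pmod{W_{p-1}}$, and together with $\Phi(\alpha'\otimes e'_J) = (2i\pi)^{-p}\tilde{\alpha}'\wedge \omega'_J$ and Remark~\ref{rempsi}, the $e_I$-component of $\varphi^*(\alpha'\otimes e'_J)$ is $m_{IJ}\,\varphi^*(\tilde{\alpha}')|_{L_I}\otimes e_I$.

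The main subtlety, and what I expect to be the crux of the argument, is the geometric-combinatorial compatibility that isolates the nonvanishing components. When $\varphi(S)\not\subset S'=L'_J$, some $f'_j\circ\varphi$ does not vanish identically on $S=L_I$; since $L_i\supset S$ for every $i\in I$, the function $f'_j\circ\varphi$ does not vanish along any such $L_i$, so $m_{ij}=0$ for all $i\in I$, producing a zero column in the matrix $(m_{ij})_{i\in I,\,j\in J}$ and hence $m_{IJ}=0$. When $\varphi(S)\subset S'$, the restriction $\varphi^*(\tilde{\alpha}')|_{L_I}$ coincides with $\varphi_{S,S'}^*(\alpha')$, and summing over $I$ independent with $L_I=S$ yields $\varphi_{S,S'}^*(\alpha')\otimes A_{S,S'}(\varphi)(e'_J)$. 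Since $A_{S'}(L')$ is spanned by monomials $e'_J$ with $J$ independent (Lemma~\ref{linearpresos}), the formula is thus established on $\gr_p^W$; the local identifications glue because strata are connected and the integers $m_{ij}$ are globally defined, and the proposition follows by taking hypercohomology.
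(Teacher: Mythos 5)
Your argument is correct and essentially coincides with the paper's proof: both rest on Lemma \ref{lemlocalfunctoriality}, the expansion $\varphi^*(\omega'_J)\equiv\sum_I m_{IJ}\,\omega_I$ modulo $W_{p-1}$, the observation that $m_{IJ}=0$ unless $\varphi(L_I)\subset L'_J=S'$, and the identification of the restricted pullback with $\varphi_{S,S'}^*$. The only (harmless) difference is packaging --- the paper carries out the same computation with global closed smooth logarithmic representatives as set up in the proof of Proposition \ref{product}, whereas you work at the level of the holomorphic logarithmic sheaves via $\Phi$ and $\Psi$ and then pass to hypercohomology --- and, incidentally, the paper's own proof of this proposition is likewise a direct computation, not the normal-crossing/sub-arrangement reduction you say you are bypassing (that reduction is only used for Propositions \ref{product} and \ref{differential}).
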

		
		\begin{proof}
		It is enough to do the proof over~$\C$ and work with the complexes~$\Omega^\bullet_{\langle X,L\rangle}$. There is a pull-back morphism
		$$\varphi^{-1}\Omega^\bullet_{\langle X',L'\rangle}\rightarrow \Omega^\bullet_{\langle X,L\rangle}$$
		that is compatible with the weight filtrations. Via the isomorphisms of Theorem \ref{grglobal}, one sees by local computation that this pull-back is as described in the Proposition at the level of holomorphic forms.
		\end{proof}
		
		When applied to the diagonal morphism~$X\rightarrow X\times X$, one gets an algebra structure on the~$E_1$ term of the Orlik-Solomon spectral sequence, as follows.
		
		\begin{prop}\label{product}
		The product		
		\begin{equation}\label{prodE}
		\wE_1^{-p,q}\otimes\wE_1^{-p',q'}\rightarrow\wE_1^{-(p+p'),q+q'}
		\end{equation}
		is obtained by tensoring the product morphisms (\ref{eqprodglobal})
		$$A_S(L)\otimes A_{S'}(L)\rightarrow A_{T}(L)$$
		with the morphisms
		$$H^{-2p+q}(S)\otimes H^{-2p'+q'}(S')\rightarrow 
		H^{-2p+q}(T)\otimes H^{-2p'+q'}(T)\overset{\cup}{\rightarrow}
		H^{-2(p+p')+(q+q')}(T)$$
		multiplied by the sign~$(-1)^{pq'}$. The above morphism is the composition of the restriction morphisms for the inclusion of~$T$ inside~$S$ and~$S'$, followed by the cup-product on~$T$.
		\end{prop}
		
		Note the sign~$(-1)^{pq'}$, which is a Koszul sign associated to the interchanging of the terms~$A_S(L)$ and~$H^{-2p'+q'}(S')$.
		
		We now turn to the description of the differential of the~$E_1$ term of the Orlik-Solomon spectral sequence.
		
		\begin{prop}\label{differential}
		Let~$S\subset S'$ be an inclusion of strata of~$L$ with~$\mathrm{codim}(S)=p$ and~$\mathrm{codim}(S')=p-1$. Then the component of the differential	
		$$d_1:\wE_1^{-p,q}\rightarrow\wE_1^{-p+1,q}$$
		indexed by~$S$ and~$S'$ is obtained by tensoring the natural morphism (\ref{eqderivglobal})
		$$A_S(L)\rightarrow A_{S'}(L)$$
		with the Gysin morphism
		$$H^{-2p+q}(S)(-p)\rightarrow H^{-2p+q+2}(S')(-p+1)$$ multiplied by the sign~$(-1)^{q-1}$. The other components of~$d_1$ are zero.
		\end{prop}
		
		\begin{proof}
		\textit{First step}: If~$L=D=\{D_1,\ldots,D_l\}$ is a normal crossing divisor, this is~\cite[Proposition 8.34]{voisin}, see also~\cite[Proposition 4.7]{peterssteenbrink}. Indeed in this case we have for every subset~$I\subset\{1,\ldots,l\}$,~$A_{D_I}(D)=\Q\, e_I$ a one-dimensional vector space.\\
		\textit{Second step}: We deduce the general case from the functoriality of the Orlik-Solomon spectral sequence and the fact that~$A_\bullet(L)$ is spanned by monomials~$e_I$ with~$I$ independent. Let~$e_I$ be such a monomial and let us write~$L(I)=\bigcup_{i\in I} L_i$, which is a normal crossing divisor in~$X$. From the functoriality of the spectral sequence, there is a map of spectral sequences
		$$\wE_1^{-p,q}(X,L(I))\rightarrow \wE_1^{-p,q}(X,L)$$
		which is easily seen to be injective (this follows from the injectivity in the deletion-restriction short exact sequence). Thus the differential of an element in~$H^{-2p+q}(S)\otimes \Q\, e_I$ can be read off~$\wE_1^{p,q}(X,L(I))$. We are then reduced to the first step.
		\end{proof}
		
		\begin{rem}
		 If~$X$ is any complex manifold, then we can also consider the Orlik-Solomon spectral sequence converging to the cohomology of~$X\setminus L$, and the above discussion for the~$E_1$ term remains valid. The only thing that we gain when assuming that~$X$ is a projective variety is the degeneracy of this spectral sequence at the~$E_2$ term, by Theorem \ref{mhc}.
		\end{rem}

	\subsection{The Orlik-Solomon model and the main theorem}\label{defM}
		
		We restate the results of the previous paragraph. Let~$X$ be a smooth projective variety and~$L$ a hypersurface arrangement in~$X$. Let us define
		$$M_q^n(X,L)=\bigoplus_{S\in\s_{q-n}(L)} H^{2n-q}(S)(n-q) \otimes A_S(L)$$
		viewed as a Hodge structure of weight~$q$.\\
		
		\begin{enumerate}
		\item We have a \textit{product} 
		\begin{equation}\label{prodM}
		M_q^n(X,L)\otimes M_{q'}^{n'}(X,L)\rightarrow M_{q+q'}^{n+n'}(X,L).
		\end{equation}
		obtained by tensoring the product morphisms (\ref{eqprodglobal})
		$$A_S(L)\otimes A_{S'}(L)\rightarrow A_{T}(L)$$
		with the morphisms
		$$H^{2n-q}(S)\otimes H^{2n'-q'}(S')\rightarrow 
		H^{2n-q}(T)\otimes H^{2n'-q'}(T)\overset{\cup}{\rightarrow}
		H^{2(n+n')-(q+q')}(T)$$
		multiplied by the sign~$(-1)^{(q-n)q'}$. The above morphism is the composition of the restriction morphisms for the inclusion of~$T$ inside~$S$ and~$S'$, followed by the cup-product on~$T$.	

		\item We have a \textit{differential}
		\begin{equation}\label{diffM}
		d:M_q^n(X,L)\rightarrow M_q^{n+1}(X,L).
		\end{equation}		
		Let~$S\subset S'$ be an inclusion of strata of~$L$ with~$\mathrm{codim}(S)=q-n$ and~$\mathrm{codim}(S')=q-(n+1)$. Then the component of the differential (\ref{diffM}) indexed by~$S$ and~$S'$ is obtained by tensoring the natural morphism (\ref{eqderivglobal})
		$$A_S(L)\rightarrow A_{S'}(L)$$
		with the Gysin morphism
		$$H^{2n-q}(S)(n-q)\rightarrow H^{2n-q+2}(S')(n-q+1)$$ multiplied by the sign~$(-1)^q$. The other components of the differential (\ref{diffM}) are zero.
		
		\item Let~$X'$ be another smooth projectiver variety,~$L'$ be a hypersurface arrangement in~$X'$ and~$\varphi:X\rightarrow X'$ be a holomorphic map such that~$\varphi^{-1}(L')\subset L$. Then we define a map 
		\begin{equation}\label{functorialityM}
		M^\bullet(\varphi):M^\bullet(X',L')\rightarrow M^\bullet(X,L).
		\end{equation}
		Let~$S$ and~$S'$ be strata of codimension~$q-n$ respectively of~$L$ and~$L'$ such that~$\varphi(S)\subset S'$, and let~$\varphi_{S,S'}:S\rightarrow S'$ be the restriction of~$\varphi$. Then the component of~$M^n_q(\varphi)$ indexed by~$S$ and~$S'$ is obtained by tensoring the morphism (\ref{functorialityS})
		$$A_{S,S'}(\varphi):A_{S'}(L')\rightarrow A_S(L)$$
		with the pull-back morphism
		$$\varphi_{S,S'}^*:H^{2n-q}(S')\rightarrow H^{2n-q}(S).$$
		The other components of~$M^\bullet(\varphi)$ are zero.
		\end{enumerate}		
		
		In the next theorem, a \emph{split mixed Hodge structure} is a mixed Hodge structure that is a direct sum of pure Hodge structures. Recall that a graded algebra~$B=\oplus_{n\geq 0}B_n$ is said to be graded-commutative if for homogeneous elements~$x$ and~$x'$ in~$B$ we have~$xx'=(-1)^{|x||x'|}x'x$.
		
		\begin{thm}\label{maintheorem}
		Let~$X$ be a smooth projective variety over~$\C$ and~$L$ be a hypersurface arrangement in~$X$.
		\begin{enumerate}
		\item The direct sum~$M^\bullet(X,L)=\bigoplus_{q}M^\bullet_q(X,L)$ is a graded-commutative differential graded algebra in the category of split mixed Hodge structures. It is functorial with respect to~$(X,L)$, using (\ref{functorialityM}).
		\item We have isomorphisms of algebras in the category of split mixed Hodge structures:~$$\gr^W H^{\bullet}(X\setminus L)\cong H^\bullet(M^\bullet(X,L)) .$$ They are functorial with respect to~$(X,L)$.
		\end{enumerate}
		\end{thm}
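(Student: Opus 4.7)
My proof plan is to realize $M^\bullet(X,L)$ as the $E_1$-page of the Gysin spectral sequence $\wE_r(X,L)$ from \S\ref{gysinss}, endowed with the dga structure inherited from the filtered dga $(\Omega^\bullet_{\langle X,L\rangle},W)$, and then read off both statements of the theorem from this identification.

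\medskip

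\textbf{Step 1: Identification with the $E_1$-page.} The formula for $\wE_1^{-p,q}(X,L)$ and the definition of $M^n_q(X,L)$ match via the change of indices $p=q-n$, which makes the total degree $-p+q$ equal to $n$ and preserves the Hodge-theoretic weight $q$. Under this identification, Proposition \ref{differential} identifies the differential $d_1\colon \wE_1^{-p,q}\to\wE_1^{-p+1,q}$ with the differential (\ref{diffM}) of $M^\bullet_q(X,L)$, and Proposition \ref{product} identifies the product on $\wE_1$ with the product (\ref{prodM}) on $M^\bullet(X,L)$; the sign $(-1)^{(q-n)q'}$ in (\ref{prodM}) is exactly the Koszul sign $(-1)^{pq'}$ appearing in Proposition \ref{product}.

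\medskip

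\textbf{Step 2: The dga axioms.} Since $(\Omega^\bullet_{\langle X,L\rangle},W)$ is a filtered graded-commutative dga (the weight filtration is stable under wedge product, as is clear from the local description by the forms $\eta\wedge\omega_I$), its associated spectral sequence is a spectral sequence of graded-commutative dgas. Hence $\wE_1$ is automatically a graded-commutative dga, and so $M^\bullet(X,L)$ is a graded-commutative dga. All the structure maps are morphisms of split mixed Hodge structures because by Theorem \ref{mhc} the Hodge filtration $F$ endows each $\wE_1^{-p,q}$ with a pure Hodge structure of weight $q$ and the $d_1$-differentials and multiplications on the spectral sequence are morphisms of Hodge structures (the latter following from the fact that the product on $\Omega^\bullet_{\langle X,L\rangle}$ is strictly compatible with $F$ in each bidegree). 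The functoriality (\ref{functorialityM}) is then exactly the content of Proposition \ref{propfunctoriality}, rewritten in the $M^\bullet$-indexing.

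\medskip

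\textbf{Step 3: The quasi-isomorphism with $\gr^W H^\bullet(X\setminus L)$.} By the main degeneracy statement of Theorem \ref{mhc} applied to the mixed Hodge complex $\K(X,L)$ constructed in Theorem \ref{mhs}, the Gysin spectral sequence degenerates at $E_2$, and
\[
\wE_2^{-p,q}=\wE_\infty^{-p,q}=\gr_q^{W[-n]}\mathbb{H}^n(\K_\Q(X,L))=\gr_q^W H^n(X\setminus L)
\]
for $n=-p+q$, where on the right we use Theorem \ref{mhs} to identify $\mathbb{H}^n(\K_\Q(X,L))$ with $H^n(X\setminus L)$ and the $W[-n]$ convention of Theorem \ref{mhc} to match the weight indexing used in the statement. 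Combining with Step~1, this gives isomorphisms of pure Hodge structures
\[
H^n(M^\bullet_q(X,L))\;=\;\wE_2^{-(q-n),q}(X,L)\;\cong\;\gr_q^W H^n(X\setminus L),
\]
and these are isomorphisms of algebras because the spectral sequence of the filtered dga $\Omega^\bullet_{\langle X,L\rangle}$ is a spectral sequence of algebras converging to the algebra $H^\bullet(X\setminus L,\C)$ with the filtration induced by $W$. Functoriality of this isomorphism follows from Proposition \ref{propfunctoriality} together with the functoriality of the mixed Hodge complex $\K(X,L)$ stated in Theorem \ref{mhs}.

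\medskip

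\textbf{Expected obstacles.} The routine but tedious part is keeping the sign conventions consistent between Propositions \ref{product}–\ref{differential} (written in spectral-sequence indices $(-p,q)$) and the definitions (\ref{prodM})–(\ref{diffM}) (written in cohomological indices $(n,q)$); no mathematical difficulty arises, only bookkeeping. The only genuinely non-formal input is the identification of the $\gr_k^W$ of $\K(X,L)$ with the Hodge complex $\bigoplus_{S\in\s_k(L)} (i_S)_*\K(S)[-k](-k)\otimes A_S(L)$ with its \emph{rational} structure and Tate twist, which was already carried out in Theorem \ref{mhs} by a local computation.
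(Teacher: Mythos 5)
Your proposal is correct and follows essentially the same route as the paper: identify $M^\bullet(X,L)$ with the $E_1$-page of the Gysin spectral sequence via Propositions \ref{product}, \ref{differential} and \ref{propfunctoriality}, and read off both assertions from the $E_2$-degeneration and convergence statements of Theorem \ref{mhc} applied to the mixed Hodge complex of Theorem \ref{mhs}. The only bookkeeping point is that the sign in Proposition \ref{differential} is $(-1)^{q-1}$ whereas (\ref{diffM}) uses $(-1)^q$; as the paper notes, this global sign change of the differential produces an isomorphic dga, so your identification in Step 1 goes through.
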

		
		We call~$M^\bullet(X,L)$ the \emph{Orlik-Solomon model} of the pair~$(X,L)$.
		
		\begin{proof}[Proof of Theorem \ref{maintheorem}]
		\begin{enumerate}
		 \item The assertion is a consequence of the previous paragraph (Propositions \ref{product}, \ref{differential} and \ref{propfunctoriality}). Note that we have multiplied the differential by~$-1$ for the sake of convenience; this gives an isomorphic differential graded algebra. 
		 \item The isomorphism is just, after the change of variables~$n=-p+q$, the fact that the spectral sequence~$\wE_r^{p,q}$ degenerates at~$E_2$ and converges to the cohomology of~$X\setminus L$:
		~$$H^{p}(\wE_1^{-\bullet,q})\cong\gr^W_q H^{-p+q}(X\setminus L).$$
		 \end{enumerate}
		\end{proof}
		
		\begin{rem}\label{remGysinquotient}
		Under the assumption (\ref{connectedassumption}), we may give a presentation of the Orlik-Solomon model that is more suitable in certain situations. For~$S$ a stratum of~$L$ and~$I\subset\{1,\ldots,l\}$ an independent subset such that~$L_I=S$, we have a monomial~$e_I\in A_S(L)$. If we identify~$H^{2n-q}(S)\otimes \Q\,e_I = H^{2n-q}(L_I)$, then we see that~$M_q^n(X,L)$ is the quotient of
		$$\bigoplus_{\substack{|I|=q-n\\I\textnormal{ indep.}}}H^{2n-q}(L_I)(n-q)$$
		by the sub-vector space spanned by the images of the morphisms
		$$H^{2n-q}(L_{I'})\rightarrow\bigoplus_{\substack{i\in I'\\ I'\setminus\{i\}\textnormal{ indep.}}} H^{2n-q}(L_{I'\setminus\{i\}})$$
		for~$I'$ dependent. The above morphism the alternate sum of identity morphisms (if~$I'$ is dependent and~$I'\setminus\{i\}$ is independent, then~$L_{I'\setminus\{i\}}=L_{I'}$ for dimension reasons).
		\end{rem}
		
\section{Wonderful compactifications and the Orlik-Solomon model}\label{sectionwonderful}

	\subsection{Hypersurface arrangements and wonderful compactifications}\label{secblowups}

		\begin{defi}\label{defigood}
		Let~$L=\{L_1,\ldots,L_l\}$ be a  hyperplane arrangement in~$\C^n$ and let~$Z$ be a stratum of~$L$. We say that~$Z$ is a \emph{good stratum} if there exists coordinates~$(z_1,\ldots,z_n)$ on~$\C^n$ such that~$Z=\{z_1=\cdots=z_r=0\}$ for some~$r$, and for each~$i=1,\ldots,l$,~$L_i$ is either of the type~$\{a_1z_1+\cdots+a_rz_r=0\}$ or of the type~$\{a_{r+1}z_{r+1}+\cdots+a_nz_n=0\}$.
		\end{defi}
		
		\begin{ex}
		In~$\C^3$, let~$L_1=\{x=0\}$,~$L_2=\{y=0\}$,~$L_3=\{z=0\}$,~$L_4=\{x=y\}$. Then the stratum~$\{x=y=0\}$ is good, but the stratum~$\{x=z=0\}$ is not.
		\end{ex}
		
		Let~$L=\{L_1,\ldots,L_l\}$ be a hypersurface arrangement in a complex manifold~$X$ and let~$Z$ be a stratum of~$L$. We say that~$Z$ is a \emph{good stratum} 
		if in every local chart where the~$L_i$'s are hyperplanes, it is a good stratum 
	 in the sense of the above definition. A stratum of dimension~$0$ (a point) is always good. In the case of a normal crossing divisor, all strata are good.
		
		\begin{lem}\label{blowuphyparr}
		Let~$L=\{L_1,\ldots,L_l\}$ be a hypersurface arrangement in a complex manifold~$X$,~$Z$ be a good stratum of~$L$, and
		$$\pi:\td{X}\rightarrow X$$ be the blow-up of~$X$ along~$Z$. Let~$E=\pi^{-1}(Z)$ be the exceptional divisor, and for all~$i$, let~$\td{L}_i$ be the strict transform of~$L_i$. Then~$\td{L}=\{E,\td{L}_1,\ldots,\td{L}_l\}$ is a hypersurface arrangement in~$\td{X}$.
		\end{lem}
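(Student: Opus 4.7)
The argument is local on $\td{X}$, so it suffices to verify the hypersurface arrangement condition at each point $\td{p} \in \td{X}$. For $\td{p} \notin E$, the blow-up $\pi$ is a local biholomorphism onto a neighborhood of $\pi(\td{p}) \in X \setminus S$, and since $E$ does not meet $\td{X} \setminus E$, the local arrangement $\td{L}$ at $\td{p}$ is simply the pullback under $\pi$ of the local arrangement $L$ at $\pi(\td{p})$. The condition is therefore inherited directly from the fact that $L$ is a hypersurface arrangement in $X$. The substantive case is $\td{p} \in E$.

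Given $\td{p} \in E$ with $p = \pi(\td{p}) \in S$, the good stratum hypothesis provides a chart $V \cong \Delta^n$ around $p$ with coordinates $(z_1, \ldots, z_n)$ in which $S \cap V = \{z_1 = \cdots = z_r = 0\}$ and each $L_i$ meeting $V$ is cut out either by a form $\sum_{j=1}^{r} a_j^{(i)} z_j$ (when $S \subset L_i$) or by a form $\sum_{j=r+1}^{n} a_j^{(i)} z_j$ (when $S \not\subset L_i$). The next step is to cover $\pi^{-1}(V)$ by the $r$ standard affine blow-up charts $V_k \cong \Delta^n$, $k = 1, \ldots, r$, with coordinates $(u_1, \ldots, u_r, z_{r+1}, \ldots, z_n)$ and blow-up map given by $z_k = u_k$ and $z_j = u_j u_k$ for $j \in \{1, \ldots, r\} \setminus \{k\}$, and to read off the equation of each component of $\td{L}$ in such a chart.

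A direct calculation yields:
\begin{itemize}
\item $E \cap V_k = \{u_k = 0\}$;
\item for $L_i$ containing $S$, the pullback factors as $u_k \bigl( a_k^{(i)} + \sum_{j \neq k,\, j \leq r} a_j^{(i)} u_j \bigr)$, so $\td{L}_i \cap V_k = \{ a_k^{(i)} + \sum_{j \neq k,\, j \leq r} a_j^{(i)} u_j = 0 \}$;
\item for $L_i$ not containing $S$, the defining form does not involve $z_1, \ldots, z_r$, so $\td{L}_i \cap V_k$ is still cut out by $\sum_{j=r+1}^{n} a_j^{(i)} z_j = 0$.
\end{itemize}
All three kinds of equations are (affine) linear in the chart coordinates on $V_k$, so $\td{L} \cap V_k$ is a hypersurface arrangement in $V_k$. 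Since the charts $V_k$ cover $\pi^{-1}(V)$, this takes care of $\td{p}$.

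The chart computations are routine; the real content of the lemma is that the good stratum hypothesis is precisely what allows one to separate the defining forms of the $L_i$'s into the two types above, so that after factoring the exceptional coordinate $u_k$ out of the pullbacks, the remaining factors are still linear. Without this decoupling, the defining form of an $L_i$ not containing $S$ would in general involve both the $z_j$ with $j \leq r$ and those with $j > r$; its pullback would then pick up mixed terms of the form $u_j u_k$ that cannot be removed by a factorization, and the strict transform would fail to be linear. This separation is the main conceptual step, and once it is in hand the rest of the proof is mechanical.
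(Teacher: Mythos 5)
Your proof is correct and takes essentially the same route as the paper: reduce to a chart adapted to the good stratum, cover the blow-up by the $r$ standard charts, and check that $E$ and the strict transforms of the two types of hypersurfaces are cut out by affine equations there. The paper makes explicit one small step you compress — the affine equations become genuinely \emph{linear} only after translating coordinates in smaller charts around each point — but this is immediate and does not affect the argument.
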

		
		\begin{proof}
		It is enough to do the proof for~$X=\Delta^n$ and the~$L_i$'s hyperplanes. We choose coordinates~$(z_1,\ldots,z_n)$ as in Definition \ref{defigood}. We have~$r$ natural local charts~$\td{X}_k\cong\Delta^n$ on~$\td{X}$,~$k=1,\ldots,r$. On the chart~$\td{X}_k$, the blow-up morphism is given by
		$$\pi(z_1,\ldots,z_n)=(z_1z_k,\ldots,z_{k-1}z_k,z_k,z_{k+1}z_k,\ldots,z_rz_k,z_{r+1},\ldots,z_n)$$
		In this chart,~$E$ is defined by the equation~$z_k=0$. The strict transform of a hyperplane of the type~$\{a_1z_1+\cdots+a_rz_r=0\}$ is given by the equation~$a_1z_1+\cdots+a_{k-1}z_{k-1}+a_k+a_{k+1}z_{k+1}+\cdots+a_rz_r=0$. The strict transform of a hyperplane of the type~$\{a_{r+1}z_{r+1}+\cdots+a_nz_n=0\}$ is defined by the same equation.
		
		To sum up, in the chart~$\td{X}_k$, all the hypersurfaces of~$\td{L}$ are given by affine equations. Up to some translations, we can then find smaller charts where all the equations are linear. This completes the proof.
		\end{proof}
		
		With the notations of the above lemma, we will simply write that
		$$\pi:(\td{X},\td{L})\rightarrow (X,L)$$ is the blow-up of the pair~$(X,L)$ along the good stratum~$Z$. We stress the fact that~$\td{L}$ is the hypersurface arrangement consisting of the exceptional divisor~$E$ and all the proper transforms~$\td{L}_i$ of the hypersurfaces~$L_i$.\\
		
		The blow-ups along good strata 
		are enough to resolve the singularities of a hypersurface arrangement, as the following theorem shows. It is simply a reformulation of classical results on \enquote{wonderful compactifications}~\cite{fultonmcpherson,deconciniprocesi,hu,li}.
		
		\begin{thm}\label{seqblowups}
		 Let~$L$ be a hypersurface arrangement in a complex manifold~$X$. There exists a sequence
		~$$(\td{X},\td{L})= (X^{(N)},L^{(N)}) \overset{\pi_N}{\longrightarrow} (X^{(N-1)},L^{(N-1)}) \overset{\pi_{N-1}}{\longrightarrow} \cdots \overset{\pi_1}{\longrightarrow} (X^{(0)},L^{(0)})=(X,L)$$
		 where
		 \begin{enumerate}
		  \item for all~$k$,~$X^{(k)}$ is a complex manifold and~$L^{(k)}$ a hypersurface arrangement in~$X^{(k)}$
		  \item for all~$k$,~$\pi_k:(X^{(k)},L^{(k)})\rightarrow (X^{(k-1)},L^{(k-1)})$ is the blow-up of~$(X^{(k-1)},L^{(k-1)})$ along a 
		  good stratum of~$L^{(k-1)}$
		  \item~$\td{L}$ is a normal crossing divisor in~$\td{X}$.
		 \end{enumerate}
		\end{thm}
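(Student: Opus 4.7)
The plan is to construct the sequence by induction on a combinatorial complexity of the pair $(X, L)$, at each step exhibiting a very good stratum whose blow-up strictly reduces the complexity. Call a stratum $S$ \emph{bad} if the hyperplanes of $L$ containing $S$ are dependent (equivalently $|\{i : L_i \supset S\}| > \mathrm{codim}(S)$); by a direct local computation, $L$ is a normal crossing divisor if and only if no stratum of $L$ is bad. If $L$ is already normal crossing we take $N = 0$; otherwise the inductive step consists of choosing a very good bad stratum $S$ of $L$, applying Lemma \ref{blowuphyparr} to form $\pi : (\td{X}, \td{L}) \to (X, L)$, and showing that the induction hypothesis applies to $(\td{X}, \td{L})$.

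The heart of the argument is the selection of $S$. A first candidate is a bad stratum of minimum dimension: at any generic $p \in S$ (not lying on any smaller stratum) the only hyperplanes passing through $p$ are those containing $S$, so local coordinates centred at $p$ trivially realize the very-good condition. At non-generic points, however, there may be transverse hyperplanes that fail to decouple from the longitudinal directions, in which case $S$ is not very good and one must first blow up deeper strata to separate the offending directions. This is precisely the issue addressed by the notion of a \emph{building set} in the theory of wonderful compactifications (\cite{deconciniprocesi, li, fultonmcpherson, hu}): one restricts to an appropriate family of ``irreducible'' bad strata, and blows them up in order of increasing dimension, so that at each stage the stratum being blown up has become very good in the current arrangement.

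To verify that the complexity decreases, one analyses the local structure of $\td{L}$ in the charts of Lemma \ref{blowuphyparr}, where the strict transforms of the hyperplanes containing $S$ become (after appropriate translations) new linear hyperplanes spread across the exceptional divisor $E$. The bad stratum $S$ itself is removed, other bad strata disjoint from $S$ are unaffected, and any new bad strata appearing in $E$ sit higher up in the poset of the chosen building set, so that a lexicographic invariant measuring the ``depth'' of the unresolved bad strata strictly decreases. Termination follows, and the resulting $\td{L} = L^{(N)}$ has no bad strata, hence is a normal crossing divisor. The main difficulty is the combinatorial bookkeeping: choosing the right building set, verifying very-goodness of each center, and checking that the lex invariant strictly drops. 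These details are carried out in the cited references for central hyperplane arrangements and transfer \emph{mutatis mutandis} to the present setting, since by definition a hypersurface arrangement looks locally like a central hyperplane arrangement in a polydisc.
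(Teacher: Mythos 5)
Your high-level strategy (resolve the arrangement by blowing up strata in order of increasing dimension, following the wonderful-compactification literature) is the same as the paper's, which simply runs Hu's procedure of blowing up \emph{all} strata, dimension by dimension. But there is a genuine gap: the one claim that carries the whole theorem --- that at each stage the chosen center is a \emph{very good} stratum of the current arrangement $L^{(k-1)}$ --- is asserted rather than proved. You write that with a suitable building set ``the stratum being blown up has become very good in the current arrangement'' and that the remaining details ``transfer mutatis mutandis'' from the cited references; but the notion of (very) good stratum is specific to this paper, so no such statement can be quoted from \cite{deconciniprocesi}, \cite{fultonmcpherson}, \cite{hu} or \cite{li}, and verifying it is precisely the content of the proof. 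The paper isolates this point and disposes of it concretely: after all strata of dimension $<d$ have been blown up, the strict transform $\td{S}$ of a $d$-dimensional stratum $S$ meets only the strict transforms $\td{L}_i$ with $S\subset L_i$ and the exceptional divisors coming from strata $C\subset S$; by Hu's Theorem 1.1 only those $C$ fitting into a flag $C_1\subset\cdots\subset C_k\subset S$ matter near a given point, and very-goodness then follows from an explicit local computation on the iterated blow-up of a flag of linear subspaces of $\C^n$. Nothing playing this role appears in your proposal.

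A second, related gap is your termination argument. The ``lexicographic invariant measuring the depth of the unresolved bad strata'' is never defined, and the assertions that bad strata disjoint from $S$ are unaffected while ``any new bad strata appearing in $E$ sit higher up in the poset'' are not justified: strict transforms of strata that meet $S$ without containing or being contained in it also change, and their status must be tracked. Note that no well-founded induction is needed at all: the paper's procedure is a fixed finite list of blow-ups (all points, then strict transforms of all one-dimensional strata, etc., the centers at each step being pairwise disjoint), so termination is automatic and the only issue is very-goodness of each center, together with the standard fact that the end result is a normal crossing divisor. Finally, a small but relevant slip: very-goodness is a condition on the hyperplanes \emph{not} containing $S$ (they must contain a common complement of $S$ and be independent), whereas your notion of ``bad'' concerns the hyperplanes containing $S$; so organizing the induction around bad strata does not by itself engage the condition you need to verify.
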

		
		\begin{proof}
		An arrangement of hypersurfaces defines an arrangement of subvarieties in the sense of~\cite{li}. Let us fix a building set~$\mathcal{G}$ and let~$\pi:\td{X}\rightarrow X$ be the corresponding wonderful compactification, with~$\td{L}=\pi^{-1}(L)$. Then according to~\cite{li},~$\pi$ is a composition of blow-ups along a minimal element of a building set. It simply remains to prove that a minimal element of a building set is a good stratum. We work in the cotangent spaces, hence reducing to a statement of linear algebra.
		
		Let~$\mathcal{G}$ be a building set of an arrangement of subspaces~$\mathcal{C}$ in the context of~\cite{deconciniprocesi}, and let us write~$M=\sum_{C\in\mathcal{C}}C$. We have a~$\mathcal{G}$-decomposition
		$$M=G_1\oplus\cdots\oplus G_r$$
		where the~$G_i\in\mathcal{G}$ are the maximal elements. Let~$X\in\mathcal{C}$ be any element, then~$X\subset M$ and by definition of a building set~$X\subset G_i$ for some unique~$i=1,\ldots, r$. Hence if we write~$U_i=\bigoplus_{j\neq i}G_j$, we then have, for all~$X\in\mathcal{C}$,~$X\not\subset G_i \Rightarrow X\subset U_i$.
		\end{proof}

	\subsection{Functoriality of the Orlik-Solomon model with respect to blow-ups}
	
		Let us consider a sequence of blow-ups along good strata as in Theorem \ref{seqblowups}:
		$$(\td{X},\td{L})= (X^{(N)},L^{(N)}) \overset{\pi_N}{\longrightarrow} (X^{(N-1)},L^{(N-1)}) \overset{\pi_{N-1}}{\longrightarrow} \cdots $$
		$$\cdots \overset{\pi_2}{\longrightarrow} (X^{(1)},L^{(1)})\overset{\pi_1}{\longrightarrow} (X^{(0)},L^{(0)})=(X,L).$$
		Then by the functoriality of the Orlik-Solomon model we get a sequence of morphisms of differential graded algebras (in the category of split mixed Hodge structures):
		$$M^\bullet(X,L)=M^\bullet(X^{(0)},L^{(0)}) \overset{\substack{M^\bullet(\pi_1)\\\sim}}{\longrightarrow} M^\bullet(X^{(1)},L^{(1)}) \overset{\substack{M^\bullet(\pi_2)\\\sim}}{\longrightarrow}\cdots$$ 
		$$\cdots \overset{\substack{M^\bullet(\pi_{N-1})\\\sim}}{\longrightarrow}  M^\bullet(X^{(N-1)},L^{(N-1)}) \overset{\substack{M^\bullet(\pi_{N})\\\sim}}{\longrightarrow}M ^\bullet(X^{(N)},L^{(N)})=M^\bullet(\td{X},\td{L}).$$
		For each~$k$,~$M^\bullet(\pi_k)$ is a quasi-isomorphism since~$\pi_k$ induces an isomorphism~$X^{(k)}\setminus L^{(k)}\isomto X^{(k-1)}\setminus L^{(k-1)}$. Thus we get a natural quasi-isomorphism between the Orlik-Solomon model of~$(X,L)$ and that of~$(\td{X},\td{L})$.
		
		In the following theorem, we give explicit formulas in the case of a single blow-up. For simplicity, we work under the assumption (\ref{connectedassumption}) and use the presentation of the Orlik-Solomon model given in Remark \ref{remGysinquotient}.
	
		\begin{thm}\label{formulaMpi}
		Let~$X$ be a smooth projective variety over~$\C$ and~$L$ be a hypersurface arrangement in~$X$ such that the assumption (\ref{connectedassumption}) is satisfied. Let~$Z$ be a good stratum of~$L$ and 
		$$\pi:(\td{X},\td{L})\rightarrow(X,L)$$
		be the blow-up of~$(X,L)$ along~$Z$. Let 
		$$M^\bullet(\pi):M^\bullet(X,L)\rightarrow M^\bullet(\td{X},\td{L})$$
		be the morphism of differential graded algebras induced by~$\pi$ on the Orlik-Solomon models. Then
		\begin{enumerate}
		\item~$M^\bullet(\pi)$ is a quasi-isomorphism.
		\item the components of~$M^n_q(\pi)$ are given, for~$I=\{i_1<\cdots<i_{q-n}\}$ independent, by
			\begin{enumerate}
				\item the pull-back morphism~$H^{2n-q}(L_I)\overset{\pi^*}{\rightarrow} H^{2n-q}(\td{L}_I)$.
				\item for all~$s$ such that~$Z\subset L_{i_s}$, the morphism~$H^{2n-q}(L_I)\rightarrow H^{2n-q}(E\cap\td{L}_{I\setminus\{i_s\}})$
				which is the pull-back morphism corresponding to~$E\cap\td{L}_{I\setminus\{i_s\}}\stackrel{\pi}{\rightarrow} Z\cap L_{I\setminus\{i_s\}}=Z\cap L_I\hookrightarrow L_I$, multiplied by the sign~$(-1)^{s-1}$.
			\end{enumerate}
		\end{enumerate}
		\end{thm}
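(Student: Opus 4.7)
The plan is to deduce both statements from Theorem \ref{maintheorem} together with the general functoriality formula (\ref{functorialityM}). For part (1), since $\pi$ restricts to an isomorphism $\td{X}\setminus\td{L}\isomto X\setminus L$, the pull-back $\pi^*:H^\bullet(X\setminus L)\isomto H^\bullet(\td{X}\setminus\td{L})$ is an isomorphism of mixed Hodge structures, and by the functoriality stated in Theorem \ref{maintheorem}(2) it is identified with $H^\bullet(M^\bullet(\pi))$; this gives (1). For part (2), formula (\ref{functorialityM}) writes $M^n_q(\pi)$ as a sum, over pairs $(\td{S},L_I)$ with $\pi(\td{S})\subset L_I$ and $\mathrm{codim}(\td{S})=q-n$, of the tensor product of the Orlik-Solomon component $A_{\td{S},L_I}(\pi):A_{L_I}(L)\to A_{\td{S}}(\td{L})$ with the pull-back on cohomology. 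Everything thus reduces to a direct local computation of $A_\bullet(\pi)$ on the generators $e_j$.

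Working in a local chart adapted to the good stratum decomposition from the proof of Lemma \ref{blowuphyparr}, one checks in each of the $r$ blow-up charts that
\[\pi^*(f_j)=u_j\cdot f_{\td{L}_j}\cdot f_E^{\epsilon_j},\]
with $u_j$ a local unit, $\epsilon_j=1$ if $S\subset L_j$ (the ``vertical'' case, in which one factor of the exceptional divisor is absorbed), and $\epsilon_j=0$ otherwise (the ``horizontal'' case, in which $\pi^{-1}(L_j)=\td{L}_j$ locally). By the corollary to Lemma \ref{lemlocalfunctoriality} this yields
\[A_\bullet(\pi)(e_j)=e_{\td{L}_j}+\epsilon_j\,e_E.\]

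Since $A_\bullet(\pi)$ is a morphism of graded algebras, for an independent $I=\{i_1<\cdots<i_k\}$ with $|I|=q-n$ I would then expand, using $e_E\wedge e_E=0$ and the fact that moving $e_E$ from position $s$ to the front contributes exactly the sign $(-1)^{s-1}=\sgn(\{i_s\},I\setminus\{i_s\})$,
\[A_\bullet(\pi)(e_I)=\bigwedge_{s=1}^{k}\left(e_{\td{L}_{i_s}}+\epsilon_{i_s}e_E\right)=e_{\td{L}_I}+\sum_{\substack{i\in I\\ S\subset L_i}}\sgn(\{i\},I\setminus\{i\})\,e_E\wedge e_{\td{L}_{I\setminus\{i\}}}.\]
The first summand lies in $A_{\td{L}_I}(\td{L})$ and, tensored with the pull-back $\pi^*:H^{2n-q}(L_I)\to H^{2n-q}(\td{L}_I)$, produces formula (a). The summand indexed by $i$ lies in $A_{E\cap\td{L}_{I\setminus\{i\}}}(\td{L})$, and since $S\subset L_i$ one has $\pi(E\cap\td{L}_{I\setminus\{i\}})\subset S\cap L_{I\setminus\{i\}}=S\cap L_I\subset L_I$, so the pull-back on cohomology factors as $E\cap\td{L}_{I\setminus\{i\}}\stackrel{\pi}{\to}S\cap L_I\hookrightarrow L_I$, giving formula (b) with the stated sign. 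The main obstacle is purely combinatorial: carefully tracking the signs in the exterior algebra expansion, and verifying that no further strata of $\td{L}$ contribute—both of which are automatic, the latter because the expansion above exhausts all non-zero components of $A_\bullet(\pi)(e_I)$.
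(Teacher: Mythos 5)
Your proof is correct and follows essentially the same route as the paper: part (1) is deduced from Theorem \ref{maintheorem} via the isomorphism $\td{X}\setminus\td{L}\isomto X\setminus L$, and part (2) from the functoriality formula of \S\ref{defM} together with the local computation $A_1(\pi)(e_j)=e_{\td{L}_j}+\epsilon_j e_E$ (via Lemma \ref{lemlocalfunctoriality}) and the exterior-algebra expansion of $A_\bullet(\pi)(e_I)$ with the sign $\sgn(\{i\},I\setminus\{i\})$. You merely spell out the chart computation in the blow-up that the paper summarizes as ``a local computation shows''.
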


		\begin{proof}
		\begin{enumerate}
		 \item This is obvious by Theorem \ref{maintheorem}, since~$\pi$ induces an isomorphism~$\td{X}\setminus\td{L}\isomto X\setminus L$.
		 \item It is a consequence of the general formula for functoriality given in \S\ref{defM}. Using the notation~$E=\td{L}_0$, a local computation shows that we have the following formula for~$A_\bullet(\pi):A_\bullet(L)\rightarrow A_\bullet(\td{L})$.
		~$$A_1(\pi)(e_i)=\begin{cases} e_i & \textnormal{if }L_i\textnormal{ does not contain } S\\ e_0+e_i & \textnormal{if }L_i\textnormal{ contains } S\end{cases}$$
		 \end{enumerate}
		 Thus we get
		~$$A_\bullet(\pi)(e_I)=e_I+\sum_{\substack{1\leq s\leq q-n\\Z\subset L_{i_s}}}(-1)^{s-1}e_0\wedge e_{I\setminus\{i_s\}}$$
		 and the claim follows.
		\end{proof}
		
		The above theorem and the work of Morgan~\cite[Theorem 10.1]{morganalgebraictopology} imply that~$M^\bullet(X,L)$ is a model of the space~$X\setminus L$ in the sense of rational homotopy theory.
		
		\begin{thm}\label{thmrationalhomotopy}
		The differential graded algebra $M^\bullet(X,L)$ and the space $X\setminus L$ have (non-canonically) isomorphic minimal models.
		\end{thm}
		
\section{Configuration spaces of points on curves}\label{confcurves}

	\subsection{Configuration spaces associated to graphs}
		 
		Let~$Y$ be a compact Riemann surface, i.e. a smooth projective complex curve. Let~$\Gamma$ be a finite unoriented graph with no multiple edges and no self-loops, with~$V$ its set of vertices and~$E$ its set of edges. Let~$Y^V$ be the cartesian power of~$Y$ indexed by~$V$, with coordinates~$y_v$. For~$v\in V$, we have a projection~$$p_v:Y^V\rightarrow Y.$$
		Every edge~$e\in E$ with endpoints~$v$ and~$v'$ defines a diagonal~$\Delta_e=\{y_v=y_{v'}\}\subset Y^V$ which is the locus where the coordinates corresponding to the two endpoints of~$e$ are equal. We define~$\Delta_\Gamma=\bigcup_{e\in E}\Delta_e$ and then the configuration space of points on~$Y$ associated to~$\Gamma$:
		$$C(Y,\Gamma)=Y^V\setminus\Delta_\Gamma.$$
		In the case where~$\Gamma=K_n$ is the complete graph on~$n$ vertices, we recover the configuration space 
		
		\begin{equation*}
		C(Y,n)=\{(y_1,\ldots,y_n)\in Y^n\,\,|\,\,y_i\neq y_j\ \,\, \textnormal{for}\,\, i\neq j\}=Y^n\setminus\bigcup_{i<j}\Delta_{i,j}.
		\end{equation*}
		
	\subsection{A model for the cohomology}
		
		In~\cite{kriz} and~\cite{totaroconf}, I. Kriz and B. Totaro independently found a model for the cohomology of~$C(Y,n)$. Their result has been recently generalized to~$C(Y,\Gamma)$ by S. Bloch in~\cite{blochtreeterated} (even though Bloch's framework is slightly more general, with external edges in~$\Gamma$ labeled by points of~$Y$). We recall the definition of this model. Here~$Y$ has dimension~$1$, but the general definition is similar.\\
		
		If~$B=\oplus_{n\geq 0}B_n$ is a graded-commutative graded algebra and~$\{x_\alpha\}$ are indeterminates with prescribed degrees~$\{d_\alpha\}$, then there is a well-defined notion of graded-commutative algebra generated by the~$x_\alpha$'s over~$B$. This is a graded-commutative graded algebra which is the quotient of~$B[\{x_\alpha\}]$ by the relations~$bx_\alpha=(-1)^{|b|d_\alpha}x_\alpha b$ for~$b$ homogeneous, and~$x_\beta x_\alpha=(-1)^{d_\alpha d_\beta}x_\alpha x_\beta$ for all~$\alpha$ and~$\beta$. For example, if~$B$ is a field concentrated in degree~$0$ then we recover the exterior algebra generated by the~$x_\alpha$'s. We use the wedge notation~$x_\alpha\wedge x_\beta$ to remember the graded-commutativity property.\\
		
		Let us define, following~\cite{blochtreeterated}, a graded-commutative differential graded algebra~$N^\bullet(Y,\Gamma)$ in the following way. It is generated (as a graded-commutative algebra) by the cohomology~$H^\bullet(Y^V)$ and elements~$G_e$ in degree~$1$ for every edge~$e\in E$, modulo the relations:
		\begin{enumerate}[(R1)]
		 \item~$p_{v}^*(c)G_e=p_{v'}^*(c)G_e$ for every class~$c\in H^\bullet(Y)$, where~$v$ and~$v'$ are the endpoints of~$e$ in~$\Gamma$.
		 \item~$\sum_{i=1}^r(-1)^{i-1}G_{e_1}\wedge\cdots\wedge\widehat{G_{e_i}}\wedge\cdots\wedge G_{e_r}=0$ if~$\{e_1,\ldots,e_r\}\subset E$ contains a loop.
		\end{enumerate}
		\vspace{5mm}

		We now define a differential~$d$ on~$N^\bullet(Y,\Gamma)$ as zero on~$H^\bullet(Y^V)$ and given on the elements~$G_e$ by the formula~$$d(G_e)=[\Delta_e]\in H^2(Y^V).$$
		One shows that~$d$ is well-defined and makes~$N^\bullet(Y,\Gamma)$ into a graded-commutative differential graded algebra.

	\subsection{The isomorphism with the Orlik-Solomon model}
		
		By choosing charts on~$Y$, one easily sees that~$L=\Delta_\Gamma$ is a hypersurface arrangement in~$X=Y^V$. Thus theorem \ref{maintheorem} can be applied to the pair~$(Y^V,\Delta_\Gamma)$ and gives a model for the cohomology of~$C(Y,\Gamma)=Y^V\setminus\Delta_\Gamma$. We fix an linear order on the set~$E$ of edges of~$\Gamma$, hence on the irreducible components~$\Delta_e$ of~$\Delta_\Gamma$. This allows us to consider the Orlik-Solomon model~$M^\bullet(Y^V,\Delta_\Gamma)$, with its presentation given by Remark \ref{remGysinquotient}. Thus~$M_q^n(Y^V,\Delta_\Gamma)$ is a quotient of 
		$$\bigoplus_{\substack{I\subset E\\ |I|=q-n\\I\textnormal{ indep.}}} H^{2n-q}(\Delta_I)(n-q).$$
		We note that a subset~$I\subset E$ is dependent if and only if it contains a loop, and is a circuit if and only if it is a simple loop.\\
		
		We define a morphism of differential graded algebras~$$\alpha:N^\bullet(Y,\Gamma)\rightarrow M^\bullet(Y^V,\Delta_\Gamma)$$ in the following way.\\
		First we note that for all~$n$ we have~$M^n_n(Y^V,\Delta_\Gamma)=H^n(Y^V)$, and we easily see that the resulting (injective) map~$H^\bullet(Y^V)\rightarrow M^\bullet(Y^V,\Delta_\Gamma)$ is a map of graded algebras. Then we define~$\alpha(G_e)$ to be a generator~$g_e$ of~$H^0(\Delta_e)(-1)\subset M^1_2(Y^V,\Delta_\Gamma)$. 
		
		\begin{lem}
		 The morphism~$\alpha$ is well-defined and compatible with the differentials. It is thus a map of differential graded algebras.
		\end{lem}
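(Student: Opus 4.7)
The plan is to verify first that the prescription $G_e \mapsto g_e$, together with the canonical inclusion $H^\bullet(Y^V) \hookrightarrow M^\bullet(Y^V,\Delta_\Gamma)$ as $M^n_n(Y^V,\Delta_\Gamma)=H^n(Y^V)$, descends to a well-defined algebra map by respecting the relations (R1) and (R2); then to check that $\alpha$ commutes with $d$ on the algebra generators, extending by the graded Leibniz rule on both sides. Graded-commutativity is automatic since $\alpha$ preserves cohomological degree and both targets are graded-commutative, so only (R1) and (R2) require genuine verification.

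For well-definedness: the relation (R1) is handled by Proposition \ref{product}, which computes $p_v^*(c)\cdot g_e$ in $M^\bullet$ by restricting $p_v^*(c)$ to the stratum $\Delta_e$ before cupping; since $\Delta_e=\{y_v=y_{v'}\}$, the restrictions of $p_v^*(c)$ and $p_{v'}^*(c)$ coincide on $\Delta_e$, so the two products agree. For (R2), use the quotient presentation of $M^\bullet$ in Remark \ref{remGysinquotient}. Writing $I'=\{e_1,\ldots,e_r\}$, the image of the alternating sum identifies with $\sum_{i=1}^r(-1)^{i-1}e_{I'\setminus\{e_i\}}=\delta(e_{I'})$ tensored with $1\in H^0(\Delta_{I'\setminus\{e_i\}})$ in each summand. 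If $I'$ contains a loop then $I'$ is dependent in the hyperplane-arrangement sense, so $\delta(e_{I'})$ lies in the Orlik-Solomon ideal $J_\bullet(\Delta_\Gamma)$ by definition and vanishes in $A_\bullet(\Delta_\Gamma)$, making the image zero.

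For differential compatibility, by the graded Leibniz rule it suffices to check on generators. On $c\in H^n(Y^V)\subset M_n^n$ we have $d(c)=0$ in $M^\bullet$ since $M_n^{n+1}=0$ (there are no strata of codimension $-1$), in agreement with $d(c)=0$ in $N^\bullet$. For the generator $G_e$: in $N^\bullet$ we have $d(G_e)=[\Delta_e]\in H^2(Y^V)$; in $M^\bullet$, the element $g_e\in H^0(\Delta_e)(-1)\otimes\Q e_{\{e\}}\subset M^1_2$ has Gysin differential equal to the Gysin map of $\Delta_e\hookrightarrow Y^V$ applied to $1$, namely $[\Delta_e]\in H^2(Y^V)(-1)$, and the prefactor $(-1)^q$ in (\ref{diffM}) equals $+1$ since $q=2$, so $d(g_e)=[\Delta_e]=\alpha(dG_e)$. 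The main concern throughout is sign bookkeeping: the Orlik-Solomon signs governing the product in (\ref{prodM}) and the $(-1)^q$ sign on the Gysin differential could in principle have forced $\alpha$ to be defined with compensating factors, but the fortunate feature of this setup is that every $g_e$ lives in bidegree $(n,q)=(1,2)$, so the relevant exponents $(q-n)q'$ and $q$ are always even and no correction is needed.
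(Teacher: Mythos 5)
Your proposal is correct and follows essentially the same route as the paper: check (R1) via the product formula in the Gysin model (restriction to $\Delta_e$, where $p_v\circ i_e=p_{v'}\circ i_e$), check (R2) via the quotient presentation of Remark \ref{remGysinquotient} (equivalently, $\delta(e_{I'})$ dying in the Orlik--Solomon algebra), and verify $d$ on generators, with $d(g_e)=[\Delta_e]$ by definition of the Gysin morphism. Your additional remarks that $M_n^{n+1}=0$ forces $d=0$ on $H^\bullet(Y^V)$ and that the signs $(-1)^{(q-n)q'}$ and $(-1)^q$ are trivial here are fine and consistent with the paper's (terser) argument.
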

		
		\begin{proof}
		 First we show that~$\alpha$ respects relations~$(\mathrm{R1})$ and~$(\mathrm{R2})$. For relation~$(\mathrm{R1})$ we see that by definition
		~$$\alpha(p_v^*(c)G_e)=p_v^*(c)g_e=p_v^*(c)_{|\Delta_e}\in H^\bullet(\Delta_e).$$
		 This equals~$i_e^*(p_v^*(c))=(p_v\circ i_e)^*(c)$ where~$i_e:\Delta_e\hookrightarrow Y^V$ is the inclusion of~$\Delta_e$. The relation then follows from the equality~$p_v\circ i_e=p_{v'}\circ i_e$.\\
		 For relation~$(\mathrm{R2})$ we can assume that we have~$e_1<\cdots<e_r$. Then if~$R$ is the expression in the relation~$(\mathrm{R2})$ we have
		~$$\alpha(R)=\sum_{i=1}^r(-1)^{i-1}g_{e_1}\cdots\widehat{g_{e_i}}\cdots g_{e_r}$$
		 and~$g_{e_1}\cdots\widehat{g_{e_i}}\cdots g_{e_r}$ is a generator of~$H^0(\Delta_{e_1}\cap\cdots\cap\widehat{\Delta_{e_i}}\cap\cdots\cap\Delta_{e_r})(-r+1)$. Since~$\{\Delta_{e_1},\ldots,\Delta_{e_r}\}$ is dependent,~$\alpha(R)$ is thus killed by the quotient that defines~$M^\bullet(Y^V,\Delta_\Gamma)$.\\
		 We then show that~$\alpha$ is compatible with the differentials. By definition, the differential is zero on~$H^\bullet(Y^V)\subset M^\bullet(Y^V,\Delta_\Gamma)$. Furthermore,~$d\alpha(G_e)=d(g_e)$ is, by definition of the Gysin morphism, the class of~$\Delta_e$ in~$H^2(Y^V)$. This completes the proof.
		\end{proof}

		\begin{thm}\label{compbloch}
		 The morphism~$\alpha:N^\bullet(Y,\Gamma)\rightarrow M^\bullet(Y^V,\Delta_\Gamma)$ is an isomorphism of differential graded algebras.
		\end{thm}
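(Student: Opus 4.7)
The plan is to build an explicit inverse $\gamma\colon M^\bullet(Y^V,\Delta_\Gamma) \to N^\bullet(Y,\Gamma)$ using the presentation of the Gysin model from Remark \ref{remGysinquotient}. For $I \subset E$ a forest, write $\Delta_I = \bigcap_{e\in I}\Delta_e \cong Y^{V/\sim_I}$, where $\sim_I$ is the equivalence relation on $V$ generated by $I$. By K\"{u}nneth the restriction $i_I^*\colon H^\bullet(Y^V)\to H^\bullet(\Delta_I)$ is surjective, and its kernel is the ideal of $H^\bullet(Y^V)$ generated by the ``vertex-swap'' differences $p_v^*(a) - p_{v'}^*(a)$ with $a\in H^\bullet(Y)$ and $v \sim_I v'$. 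Applying Lemma \ref{kunneth} with a singleton source, each such difference annihilates $G_I$ in $N^\bullet$, hence so does any element of $\ker(i_I^*)$. Consequently, for any lift $\tilde c \in H^\bullet(Y^V)$ of $c \in H^\bullet(\Delta_I)$, the product $\tilde c \cdot G_I$ depends only on $c$, and we may set $\tilde\gamma(c\otimes G_I) := \tilde c \cdot G_I$ on $\bigoplus_{I\text{ forest}}H^\bullet(\Delta_I)\otimes \mathbb{Q}\langle G_I\rangle$.

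The key algebraic fact is that $G_J = 0$ in $N^\bullet$ for every dependent $J\subset E$: with $\delta$ denoting the standard exterior derivation of degree $-1$ on the subalgebra spanned by the $G_e$'s, the identity $G_J = G_e \wedge \delta(G_J)$ (for any $e \in J$) combined with (R2) forces $G_J = 0$, just as in the proof of Lemma \ref{linearpresos}. Using this, one checks that $\tilde\gamma$ kills the circuit relations defining $M^\bullet$: for $I'\subset E$ dependent and $\tilde c \in H^\bullet(Y^V)$ a lift of $c\in H^\bullet(\Delta_{I'})$, the image under $\tilde\gamma$ of the relation is the restricted alternating sum $\sum_{i\in I',\, I'\setminus\{i\}\text{ forest}}\sgn(\{i\},I'\setminus\{i\})\,\tilde c\cdot G_{I'\setminus\{i\}}$; the missing terms indexed by $i$ with $I'\setminus\{i\}$ still dependent already vanish in $N^\bullet$, and the completed full sum equals $\tilde c \cdot \delta(G_{I'}) = 0$ by (R2). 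Thus $\tilde\gamma$ descends to $\gamma\colon M^\bullet \to N^\bullet$.

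To conclude, a direct computation using the product formula of \S\ref{defM} gives $\alpha(\tilde c \cdot G_I) = i_I^*(\tilde c)\otimes e_I = c\otimes e_I$, whence $\alpha\circ\gamma = \mathrm{id}_{M^\bullet}$. On the other hand, every element of $N^\bullet$ can be written, using graded-commutativity, (R2), and the vanishing $G_J=0$ for $J$ dependent, as a sum of $\tilde c_I \cdot G_I$ with $I$ a forest, placing it in the image of $\gamma$. Hence $\gamma$ is bijective with inverse $\alpha$, and since $\alpha$ was already shown to be a morphism of dga's, the theorem follows. The principal subtlety is the well-definedness of $\tilde\gamma$: it rests on identifying $\ker(i_I^*)$ with the ideal generated by vertex-swap differences (a standard K\"{u}nneth computation, since each equivalence class contributes the kernel of multiplication $H^\bullet(Y)^{\otimes k}\to H^\bullet(Y)$, which is generated by positional swaps) together with the extension of (R1) to $G_I$ provided by Lemma \ref{kunneth}.
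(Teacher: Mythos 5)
Your proposal is correct and follows essentially the same route as the paper's (sketched) proof: the paper also inverts $\alpha$ by sending a class on a stratum $\Delta_I$ to a lift in $H^\bullet(Y^V)$ (there, the specific lift $f_I^*$ along a projection-built splitting of $i_I$) multiplied by $G_I$, leaving the verifications to the reader. Your version supplies exactly those verifications -- lift-independence via Lemma \ref{kunneth} and the description of $\ker(i_I^*)$, the vanishing $G_J=0$ for dependent $J$, descent through the circuit relations of Remark \ref{remGysinquotient}, and the identity $\alpha\circ\gamma=\mathrm{id}$ together with surjectivity of $\gamma$ -- so it is a sound filling-in of the same argument rather than a different one.
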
	
			
		\begin{proof}
		 We sketch the proof and leave the details to the reader. We define the inverse morphism~$\beta$ in the following way.
		 Let~$I\subset E$ be an independent set of edges of~$\Gamma$ of cardinality~$|I|=q-n$, let~$i_I:\Delta_I\hookrightarrow Y^V~$ be the inclusion of the corresponding stratum. Let~$f_I:Y^V\rightarrow \Delta_I$ be any natural splitting of~$i_I$ defined out of projections~$p_v$'s. Then we define the component of~$\beta$:
		~$$\beta^n_q:H^{2n-q}(\Delta_I)\rightarrow H^{2n-q}(Y^V)G_I$$ to be the pull-back~$f_I^*$. The degrees match since~$H^{2n-q}(Y^V)G_I$ is in degree~$2n-q+|I|=n$. It remains to prove that~$\beta$ passes to the quotient that defines~$M^\bullet(Y^V,\Delta_\Gamma)$, and defines an inverse to~$\alpha$.
		\end{proof}
		
		\begin{rem}
		 It is striking that Kriz and Totaro's model works for configuration spaces of points on any smooth projective variety~$Y$, where the diagonals can have any codimension. It is then tempting to ask for a generalization of the Orlik-Solomon model to the cohomology of~$X\setminus L$ where~$L\subset X$ locally looks like a union of sub-vector spaces of any codimension inside~$\C^n$. In~\cite{totaroconf}, B. Totaro suggests a particular case of the previous question, focusing on vector spaces~$V_i$ of a fixed codimension~$c$ such that all intersections~$V_{i_1}\cap\cdots\cap V_{i_r}$ have codimension a multiple of~$c$ (the present article handles the case~$c=1$).
		\end{rem}
		
	\subsection{Comparison with Kriz's quasi-isomorphism}\label{seccompkriz}
	
		In this paragraph we sketch the proof that Kriz's quasi-isomorphism~$\varphi$ from~\cite{kriz} can be recovered as a consequence of the functoriality of the Orlik-Solomon model.\\
		For the sake of convenience we use the notations from~\cite{kriz} and write~$E^\bullet(n)$ for~$N^\bullet(Y,K_n)$ where~$K_n$ is the complete graph on~$n$ vertices. We write~$\Delta=\Delta_{K_n}$ for the union of all diagonals of~$Y^n$. According to Theorem \ref{compbloch}, we have an isomorphism of differential graded algebras
		$$\alpha:E^\bullet(n)\stackrel{\cong}{\rightarrow}M^\bullet(Y^n,\Delta).$$
		
		Let~$\pi:Y[n]\rightarrow Y^n$ be the Fulton-MacPherson wonderful compactification~\cite{fultonmcpherson}. Then~$D=\pi^{-1}(\Delta)$ is a simple normal crossing divisor whose irreducible components~$D(S)$ are indexed by subsets~$S\subset\{1,\ldots,n\}$ with~$|S|\geq 2$. We now describe the model~$F^\bullet(n)$ defined by Kriz. By its very definition~\cite[\S 6]{fultonmcpherson}, we have a natural isomorphism of differential graded algebras
		$$\varepsilon:F^\bullet(n)\stackrel{\cong}{\longrightarrow}M^\bullet(Y[n],D)$$
		between~$F^\bullet(n)$ and the Orlik-Solomon model~$M^\bullet(Y[n],D)$. To make this isomorphism precise, let us mention that
		\begin{itemize}
		\item on~$H^\bullet(Y^n)$,~$\varepsilon$ is the pull-back~$H^\bullet(\pi):H^\bullet(Y^n)\rightarrow H^\bullet(Y[n])$;
		\item~$\varepsilon(S)$ is the generator~$g_S\in H^0(D(S))(-1)$ and~$\varepsilon(D_S)$ is the class~$[D(S)]\in H^2(Y[n])$. 
		\end{itemize}
	
		\begin{thm}
		We have a commutative square
		\begin{displaymath}\xymatrix{
		F^\bullet(n) \ar[r]^-{\varepsilon} & M^\bullet(Y[n],D) \\
		E^\bullet(n) \ar[r]_-{\alpha} \ar[u]^{\varphi} & M^\bullet(Y^n,\Delta) \ar[u]_{M^\bullet(\pi)}\\
		}\end{displaymath}
		where~$\varphi$ is defined in~\cite[\S 3]{kriz}, the horizontal arrows are isomorphisms of differential graded algebras and the vertical arrows are quasi-isomorphisms of differential graded algebras.
		\end{thm}	
		
		\begin{proof}
		It only remains to prove that we have
		$$M^1(\pi)(g_{a,b})=\sum_{S\supset\{a,b\}}g_S.$$
		We do the proof in the case~$n=3$ (the cases~$n<3$ being trivial) and leave the general case to the reader. We may assume that~$\{a,b\}=\{1,2\}$. Then~$\pi$ is simply the blow-up along~$\Delta_{1,2,3}$,~$D(1,2,3)$ is the exceptional divisor, and the equality~$M^1(\pi)(g_{1,2})=g_{1,2}+g_{1,2,3}$ is a consequence of Theorem \ref{formulaMpi}.
		\end{proof}

\bibliographystyle{alpha}
\bibliography{bibliodethese}
\end{document}